\newtheorem{theorem}{Theorem}
\newtheorem{lemma}[theorem]{Lemma}
\newtheorem{corollary}[theorem]{Corollary}
\newtheorem{question}[theorem]{Question}
\newtheorem{proposition}[theorem]{Proposition}
\theoremstyle{definition}
\newtheorem{remark}[theorem]{Remark}
\newtheorem{conjecture}[theorem]{Conjecture}
\newcommand{\primozcomment}[1] {\color{magenta} ***  #1 *** \color{black}}
\newcommand{\meo}{{\rm meo}}
\newcommand{\mso}{{\meo_\circ}}
\newcommand{\Aut}{\hbox{\rm Aut}}
\newcommand{\Sym}{{\rm Sym}}
\newcommand{\Alt}{{\rm Alt}}
\newcommand{\la}{\langle}
\newcommand{\ra}{\rangle}
\def\Zent#1{{\bf Z}({{#1}})}
\newcommand{\NN}{\mathbb{N}}
\newcommand{\ZZ}{\mathbb{Z}}
\newcommand{\lcm}{\hbox{lcm}}
\newcommand{\id}{\hbox{id}}
\newcommand{\dPX}{{\vec{\rm PX}}}
\newcommand{\SPX}{{\rm SPX}}
\newcommand{\Z}{{\rm Z}}
\newcommand{\D}{{\rm D}}
\newcommand{\V}{{\rm V}}
\newcommand{\E}{{\rm E}}
\newcommand{\C}{{\rm C}}
\newcommand{\A}{{\rm A}}
\renewcommand{\lcm}{{\rm lcm}}
\numberwithin{equation}{section}
\numberwithin{figure}{section}
\numberwithin{theorem}{section}
\numberwithin{equation}{section}
\numberwithin{figure}{section}
\title[Orders of automorphisms]{On orders of automorphisms of vertex-transitive graphs}
\author{Primo\v{z} Poto\v{c}nik}
\author{Micael Toledo}
\author{Gabriel Verret}
\address{Primo\v{z} Poto\v{c}nik, Faculty of Mathematics and Physics, University of Ljubljana, Jadranska 21, SI-1000 Ljubljana, Slovenia.\newline
\indent Also affiliated with: Institute of Mathematics, Physics and Mechanics, Jadranska 19, SI-1000 Ljubljana, Slovenia
}
\email{primoz.potocnik@fmf.uni-lj.si}
\address{Micael Toledo, Faculty of Mathematics and Physics, University of Ljubljana, Jadranska 21, SI-1000 Ljubljana, Slovenia}
\email{micael.toledo@imfm.uni-lj.si}
\address{Gabriel Verret, Department of Mathematics, University of Auckland, Private Bag 92019, Auckland 1142, New Zealand}
\email{g.verret@auckland.ac.nz}
\thanks{The authors gratefully acknowledge support of the Slovenian Research Agency: Core Programme P1-0294, Research Project J1-1691 and the Young Researcher Scholarship programme.}
\begin{document}

\begin{abstract}
In this paper we investigate orders, longest cycles and the number of cycles of automorphisms of finite vertex-transitive graphs. In particular, we show that the order of every automorphism of a connected vertex-transitive graph with $n$ vertices and of valence $d$, $d\le 4$, is at most $c_d n$ where $c_3=1$ and $c_4 = 9$. Whether such a constant $c_d$ exists for valencies larger than $4$ remains an unanswered question. Further, we prove that every automorphism $g$ of a finite connected $3$-valent vertex-transitive graph $\Gamma$, $\Gamma \not\cong K_{3,3}$, has a regular orbit, that is, an orbit of $\langle g \rangle$ of length equal to the order of $g$. Moreover, we prove
that in this case either $\Gamma$ belongs to a well understood family of exceptional graphs or at least $5/12$ of the vertices of $\Gamma$ belong to a regular orbit of $g$. Finally, we give an upper bound on the number of orbits of a cyclic group of automorphisms $C$ of a connected $3$-valent vertex-transitive graph $\Gamma$ in terms of the number of vertices of $\Gamma$ and the length of a longest orbit of $C$.
\end{abstract}

\maketitle

\section{Introduction}
\label{sec:intro}

For a permutation $g$ of a finite non-empty set $\Omega$, let
\begin{eqnarray*}
o(g) &= & \hbox { the order of } g; \\
\ell(g) &= & \hbox { the length of the longest orbit of } \langle g \rangle; \\
\mu(g) &= & \hbox { the number of orbits of } \langle g \rangle.
\end{eqnarray*}
Equivalently, if the permutation $g$ is written as a product of disjoint cycles, then $\mu(g)$ equals the number of the cycles (including those of length $1$), $\ell(g)$ represents the length of the longest of these cycles, while $o(g)$ equals the least common multiple of the lengths
of all the cycles of $g$.
Further, for a permutation group $G\le \Sym(\Omega)$ we let
$$
\meo(G)  =  \max_{g\in G} o(g),\quad
\ell(G)  =  \max_{g \in G} \ell(g), \quad
\mu(G)  =  \min_{g \in G} \mu(g), \quad
\mso(G)  =  \max_{\omega\in \Omega} \meo(G_\omega).
$$
where $G_\omega$ denotes the stabiliser of $\omega\in \Omega$ in $G$.
These parameters are of course not mutually independent (see Lemma~\ref{lem:basic1} and Lemma~\ref{lem:mu}).
%

Studying maximal orders of permutations and the longest orbits of cyclic groups
have received considerable attention in the theory of  finite groups, especially in the context of primitive permutation groups \cite{GueMorPreSpi15,GueMorPreSpi16,GuestSpiga}. 
In this paper we propose the investigation of the above parameters in a graph theoretical setting. In particular, for a finite graph $\Gamma$ with the automorphism group $\Aut(\Gamma)$, we are interested in the following invariants:
\begin{center}
\begin{tabular}{rclcl}
$\meo(\Gamma)$ & $=$ & $\meo(\Aut(\Gamma))$ &$\ldots$ & 
   {\em maximal order of an automorphism}; \\
$\ell(\Gamma)$ & $=$ & $\ell(\Aut(\Gamma))$ &$\ldots$ & 
  {\em maximal length of an orbit of an automorphism}; \\ 
$\mu(\Gamma)$ & $=$ & $\mu(\Aut(\Gamma))$ &$\ldots$ & 
  {\em minimal number of orbits of a non-trivial automorphism}; \\ 
$\mso(\Gamma)$ & $=$ & $\mso(\Aut(\Gamma))$ & $\ldots$&
   {\em maximal order of an automorphism fixing a vertex}.\\
\end{tabular}
\end{center}

There is a number of interesting problems related to these invariants, especially
in the context of connected vertex-transitive graphs. Let us mention a few.

The first question is related to the existence and possible classification of graphs admitting
an automorphism with a large order, that is, graphs $\Gamma$ with a large value of 
$\meo(\Gamma)$. 

A classical result of Landau \cite{Landau} states that $\meo(\Sym(n))  = e^{(1+o(1))(n\log n)^{1/2}}$ giving a sub-exponential upper bound on $\meo(\Gamma)$ in terms of the number of vertices $\Gamma$, which is met by the complete graphs.
A more interesting question to ask is whether a better, possibly linear bound holds for connected graphs of fixed valence. In this paper, we consider this question in the context of finite connected vertex-transitive graphs. 

\begin{question}
\label{q:meo}
For which positive integers $d$ does there exist a constant $c_d$ such that
every connected $d$-valent vertex-transitive graph $\Gamma$ with $n$ vertices satisfies
$\meo(\Gamma) \le c_d n$?
\end{question}

In this paper, we answer this question for valencies $3$ and $4$. In particular, we prove the following:

\begin{theorem}
\label{the:34}
Let $\Gamma$ be a finite connected vertex-transitive graph of valence $d \in \{3,4\}$ and let $n$ be the number of vertices of $\Gamma$. If $d=3$, then $\meo(\Gamma) \le n$, and
if $d=4$, then $\meo(\Gamma) \le 9n$.
\end{theorem}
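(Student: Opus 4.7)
The case $d=3$ is attacked via the companion regular-orbit theorem announced in the abstract: every automorphism $g$ of a connected cubic vertex-transitive graph $\Gamma\not\cong K_{3,3}$ has an orbit of length $o(g)$. Once this is in hand, since such an orbit is a subset of $V(\Gamma)$, we immediately obtain $o(g)\le n$ and hence $\meo(\Gamma)\le n$. The exceptional graph $K_{3,3}$ is handled by direct inspection of $\Aut(K_{3,3})\cong S_3\wr S_2$: its maximum element order is $6$, which equals $|V(K_{3,3})|$, so the bound still holds (and is tight). The regular-orbit theorem in turn rests on the classical boundedness of vertex stabilisers in connected cubic vertex-transitive graphs ($|\Aut(\Gamma)_v|$ divides $48$ by Tutte-type arguments) together with an arithmetic analysis of cycle types of $g$: those cycle types whose lcm strictly exceeds the longest cycle can be shown to force a very specific local structure, which turns out to be realised only by $K_{3,3}$.

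For $d=4$ the skeleton of the argument is the identity
\[
o(g) \;=\; \ell(g)\cdot o\bigl(g^{\ell(g)}\bigr),
\]
which holds because $\ell(g)\mid o(g)$. Combined with the trivial $\ell(g)\le n$, the theorem reduces to establishing the local bound $o(g^{\ell(g)})\le 9$. The element $h:=g^{\ell(g)}$ fixes every vertex in a longest $\langle g \rangle$-orbit, so lies in some vertex stabiliser; it induces on the $4$ neighbours of that vertex a cyclic permutation, hence a cyclic subgroup of $S_4$ of order at most $4$, while its restriction to the pointwise kernel on $N(v)$ is controlled by the local $\{2,3\}$-structure forced on chains of stabilisers in a $4$-valent vertex-transitive graph. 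A case analysis of how these two factors can combine, refined by the crucial fact that $h$ arises as a power of a global automorphism whose longest orbit has length $\ell(g)$ (rather than as an arbitrary stabiliser element), narrows the possible orders of $h$ to a set whose maximum is $9$.

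\textbf{Main obstacle.} The technical heart is the local inequality $o(g^{\ell(g)})\le 9$ for the $4$-valent case. Unlike for cubic graphs, full vertex stabilisers in connected $4$-valent vertex-transitive graphs are not uniformly bounded (for instance, in $K_{4,4}$ one already has $\mso=12$), so no Tutte-type statement on $|G_v|$ is available. The delicate point is therefore to exploit the extra structure that $g^{\ell(g)}$ carries by virtue of being a power of an automorphism with a longest orbit of length $\ell(g)$: this constrains its action on the remaining $\langle g \rangle$-orbits in a way that rules out the ``large'' stabiliser elements (such as those of order $12$ in $\Aut(K_{4,4})_v$, which occur only when $\ell(g)$ is proportionally small). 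Carrying out this orbit-structure analysis and verifying that the extremal combinations produce $9$ as the sharp constant is the principal challenge of the proof.
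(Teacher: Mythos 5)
Your treatment of the case $d=3$ is essentially the paper's: the regular-orbit theorem gives $o(g)=\ell(g)\le n$ for $\Gamma\not\cong K_{3,3}$, and $K_{3,3}$ is checked directly. One caveat: your parenthetical justification of the regular-orbit theorem via ``$|\Aut(\Gamma)_v|$ divides $48$ by Tutte-type arguments'' is not available in the vertex-transitive (as opposed to arc-transitive) setting, where vertex-stabilisers of cubic graphs can be arbitrarily large (e.g.\ the split Praeger--Xu graphs); what is true, and what the paper proves and uses, is only that the \emph{orders of elements} of a vertex-stabiliser are at most $6$, and even that does not by itself yield a regular orbit --- the paper's proof proceeds instead by an analysis of the ratios of sizes of adjacent $\langle g\rangle$-orbits in the quotient $\Gamma/\langle g\rangle$.

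For $d=4$ there is a genuine gap. Your reduction $o(g)=\ell(g)\cdot o\bigl(g^{\ell(g)}\bigr)\le n\cdot o\bigl(g^{\ell(g)}\bigr)$ is correct (it is essentially Lemma~\ref{lem:basic1}), but the entire content of the theorem is then the bound $o\bigl(g^{\ell(g)}\bigr)\le 9$, and your proposal offers no mechanism for proving it beyond asserting that ``a case analysis \dots narrows the possible orders of $h$ to a set whose maximum is $9$.'' Moreover, the difficulty is not quite where you locate it. Order-$12$ stabiliser elements as in $K_{4,4}$ are not the obstruction: by Lemma~\ref{lem:mingcd}/Corollary~\ref{cor:TW}, the ratio $o(g)/\ell(g)$ is bounded by the $p'$-part of $\exp(G_v)$ for \emph{any} single prime $p$, so an exponent of the form $2^\alpha\cdot 3$ already forces $o(g)\le 2^\alpha\ell(g)$ or $3\,\ell(g)$ without any appeal to $h$ being a power of a global automorphism. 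The hard case, and the source of the constant $9$, is the local action $\Sym(3)$ in its intransitive faithful degree-$4$ action: there $G_v$ is a $\{2,3\}$-group of unbounded order, and one must bound the \emph{exponent of its Sylow $3$-subgroup} by $9$. The paper does this by contracting the invariant perfect matching to obtain a connected $6$-valent arc-transitive graph $\Lambda$, and then invoking Proposition~\ref{top} --- which rests on the Currano--Glauberman theory of finite $p$-groups with isomorphic index-$p$ subgroups --- to show that a Sylow $3$-subgroup of an edge-stabiliser has exponent at most $3^2$. Nothing in your sketch (a cyclic action on the four neighbours combined with ``the local $\{2,3\}$-structure on chains of stabilisers'') substitutes for this $p$-group machinery, and without it the bound $9$ is unproven.
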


While the constant $c_3=1$ is sharp, as witnessed by the circulants, we have no examples
of vertex-transitive graphs $\Gamma$ of valence $4$ admitting an automorphism of order $9n$. The smallest possible value of the constant $c_4$ might thus be as small as $1$.

Furthermore,  a variant of the Thompson-Wielandt theorem \cite[Corollary 3]{SpigaTW} 
together with an easy arithmetic argument allowed us to deduce
the following fact about arc-transitive locally semi-primitive graphs (recall that
$\Gamma$ is said to be $G$-arc-transitive if $G\le \Aut(\Gamma)$ acts
transitively on the set of ordered pairs of adjacent vertices, called arcs; 
for the definition of local semi-primitivity of graphs see Section~\ref{sec:TW}).

\begin{theorem}
\label{the:WieThomp}
For every positive integer $d$ there exists a constant $c_d$ such that
every connected $G$-arc-transitive $G$-locally-semiprimitive graph $\Gamma$ with $n$ vertices satisfies $\meo(G) \le c_d n$.
\end{theorem}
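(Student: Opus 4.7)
The strategy is to reduce $\meo(G)$ to the maximum order of a vertex-stabiliser element, and then to use Spiga's variant of the Thompson--Wielandt theorem to bound the latter by a function of the valence $d$ alone.

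First, I would invoke Lemma~\ref{lem:basic1}, which should give the inequality $\meo(G) \le \ell(G)\cdot\mso(G)$ (whose proof is the standard observation: if $g\in G$ has order $N$ and longest orbit length $\ell$, then $g^\ell$ fixes a vertex $v$ in that orbit and has order $N/\ell \le \meo(G_v)$). Since $\ell(G)\le n$, this yields $\meo(G)\le n\cdot\mso(G)$, and since $G$ is arc-transitive all vertex stabilisers are conjugate, so $\mso(G)=\meo(G_v)$. It therefore suffices to bound $\meo(G_v)$ by a constant depending only on $d$.

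To do this, I would analyse the standard stabiliser chain $G_v \ge G_v^{[1]} \ge G_{uv}^{[1]}$ for an edge $\{u,v\}$, where $G_v^{[1]}$ and $G_{uv}^{[1]}$ denote the pointwise stabilisers of $\Gamma(v)$ and $\Gamma(u)\cup\Gamma(v)$ respectively. The quotient $G_v/G_v^{[1]}$ is the local action $G_v^{\Gamma(v)}\le\Sym_d$, and $G_v^{[1]}/G_{uv}^{[1]}$ embeds into $\Sym(\Gamma(u)\setminus\{v\})\cong\Sym_{d-1}$; hence every $h\in G_v$ satisfies $h^E \in G_{uv}^{[1]}$ for $E := \exp(\Sym_d)\cdot\exp(\Sym_{d-1})$, a quantity depending only on $d$. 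Now \cite[Corollary~3]{SpigaTW}, under the locally-semiprimitive hypothesis, gives that $G_{uv}^{[1]}$ is a $p$-group for some prime $p\le d$, together with enough quantitative control to bound the exponent of $G_{uv}^{[1]}$ by a constant $M=M(d)$. The ``easy arithmetic argument'' then concludes: $\meo(G_v) \le E\cdot M$, and therefore $\meo(G) \le n\cdot\mso(G) \le (E\cdot M)\cdot n =: c_d n$.

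The main obstacle is contained inside \cite[Corollary~3]{SpigaTW}: extracting a constant (rather than $n$-dependent) bound on the exponent of the ``deep'' stabiliser $G_{uv}^{[1]}$ from semiprimitivity of the local action is the substantive input --- this is where the Thompson--Wielandt machinery does its work. Modulo that external result, the remainder consists of the routine stabiliser-chain and orbit-length accounting indicated above.
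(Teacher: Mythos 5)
There is a genuine gap at the key step. You write that \cite[Corollary~3]{SpigaTW} gives ``enough quantitative control to bound the exponent of $G_{uv}^{[1]}$ by a constant $M=M(d)$.'' It does not: the Thompson--Wielandt theorem (in Spiga's locally-semiprimitive form, Theorem~\ref{the:SpigaTW}) says only that $G_{uv}^{[1]}$ is a $p$-group, with no bound whatsoever on its order or exponent. Bounding $|G_{uv}^{[1]}|$ (equivalently $|G_v|$) by a constant depending only on $d$ for locally primitive, resp.\ locally semiprimitive, graphs is precisely the Weiss conjecture and its generalisation in \cite{PSV}, both of which are open; even a constant bound on $\exp(G_v)$, which is what your argument needs, is not known. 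So your plan of bounding $\mso(G)=\meo(G_v)$ by a constant and then writing $\meo(G)\le n\cdot\mso(G)$ cannot be carried out with the tools cited: the quantity $\mso(G)$ may, for all we know, be unbounded in this class.

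The paper circumvents exactly this obstacle, and this is the ``easy arithmetic argument'' alluded to in the introduction. By Lemma~\ref{lem:mingcd}, for $C=\langle g\rangle$ one has $o(g)/\ell(g)=\min\{|C_\omega|\}/\gcd\{|C_\omega|\}$, and Corollary~\ref{cor:TW} deduces from this that if $\exp(G_v)=kp^{\alpha}$ with $\gcd(k,p)=1$, then $o(g)\le k\,\ell(g)$ \emph{regardless of how large $p^{\alpha}$ is}: the $p$-power parts of the stabiliser orders largely cancel between the minimum and the gcd. Thus one only needs the $p'$-part $k$ of $\exp(G_v)$ to be bounded, and that follows from $G_{uv}^{[1]}$ being a $p$-group together with the index bound $|G_v|\le d\,(2d-2)!\,|G_{uv}^{[1]}|$ (the paper takes $c_d=d\,(2d-2)!$). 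Your own stabiliser-chain computation already establishes the analogous fact, namely that $h^{E}\in G_{uv}^{[1]}$ for every $h\in G_v$ with $E=\exp(\Sym_d)\exp(\Sym_{d-1})$, so the $p'$-part of $\exp(G_v)$ divides $E$; feeding this into Corollary~\ref{cor:TW} instead of attempting to bound $\exp(G_{uv}^{[1]})$ itself gives $o(g)\le E\,\ell(g)\le E\,n$ and repairs the proof.
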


While the mere existence of the constant $c_d$ as in Question~\ref{q:meo} often suffices for
theoretical applications, funding as small a constant $c_d$ as possible is very desirable for practical applications. What is more, it is often helpful to investigate structural properties inferred by existence of an automorphism $g$ of a large order. The following
result, which was recently used to prove a classification \cite{longorbit} of cubic vertex-transitive graphs $\Gamma$ with $\meo(\Gamma) \ge |\V(\Gamma)|/3$,
gives an explicit upper bound on the number of orbits that an automorphism of
relatively large order has. The term {\em cubic graph} is used here to refer to
a finite connected graph every vertex of which has valence $3$. We will use this term
 throughout the paper. Moreover, $\V(\Gamma)$ is used to denote the vertex-set of a graph $\Gamma$.

\begin{theorem}
\label{the:mainorbits}
Let $\Gamma$ be a cubic vertex-transitive graph with $n$ vertices and let $g \in \Aut(\Gamma)$. Then $\mu(g) \le \frac{17n}{6\, o(g)}$. In particular, $\mu(\Gamma) \le \frac{17n}{6\, \meo(\Gamma)}$.
\end{theorem}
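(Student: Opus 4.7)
Let $H = \langle g \rangle$ and $m = o(g)$. By the orbit-counting lemma,
$$
\mu(g)\cdot m \;=\; \sum_{v\in \V(\Gamma)} |H_v|,
$$
where $H_v$ is the stabilizer of $v$ in $H$. The desired inequality $\mu(g) \le 17n/(6\,o(g))$ is therefore equivalent to the averaged bound $\sum_{v\in\V(\Gamma)}|H_v|\le 17n/6$. Writing $b_s$ for the number of vertices $v$ with $|H_v|=s$, this reads $\sum_s (6s-17)\,b_s \le 0$, so vertices with $|H_v|\in\{1,2\}$ contribute negatively (coefficients $-11$ and $-5$) while those with $|H_v|\ge 3$ contribute positively. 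The task is to show that enough vertices lie on regular or semi-regular orbits to absorb the positive contributions of vertices on shorter orbits.

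The first step is to restrict the admissible values of $|H_v|$. Since $H_v$ is cyclic and acts on the three-element set of neighbours $N(v)$, its image in $\Sym(N(v))$ is a cyclic subgroup of $\Sym(3)$, of order $1$, $2$, or $3$. Let $K_v$ denote the kernel; then $K_v$ fixes $v$ and every neighbour of $v$. A standard iterative argument valid in any connected graph of valence~$3$ — if an automorphism fixes the closed ball of radius $r$ around $v$ then its square fixes the ball of radius $r+1$, because the action on each second neighbourhood is on a two-element set and hence of order at most $2$ — forces $|K_v|$ to be a power of~$2$. Consequently $|H_v|\in\{2^k\}\cup\{3\cdot 2^k\}$ for $k\ge 0$. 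Moreover, the three possible local actions give precise information about the neighbours: if the action is trivial, all three neighbours have stabilizer of order at least $|H_v|$; if it is an involution, one neighbour has stabilizer of order at least $|H_v|$ and two form an $H_v$-orbit of size~$2$, each with stabilizer of order at least $|H_v|/2$; if it is a $3$-cycle, all three neighbours form a single $H_v$-orbit, each with stabilizer of order at least $|H_v|/3$.

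The main step is to combine these local constraints with the cubic condition ($|\E(\Gamma)|=3n/2$) to obtain the weighted inequality $\sum_s (6s-17)\,b_s \le 0$. My plan is a discharging-style argument: assign each vertex the initial charge $c(v)=6|H_v|-17$, and redistribute positive charge along edges from heavy vertices ($|H_v|\ge 3$) to their neighbours, using the neighbour data from the previous step to guarantee that each light vertex ($|H_v|\in\{1,2\}$) absorbs no more than its deficit ($11$ or $5$, respectively). Summing over all vertices then yields $\sum_v c(v)\le 0$. The main obstacle is calibrating the discharge so that exactly the constant $17/6$ emerges: the tight cases presumably involve the smallest heavy stabilizer orders $s=3,4,6$, where the surplus per heavy vertex is modest but permissible neighbour configurations are most varied, and the argument must handle all admissible orders $s \in \{2^k\}\cup\{3\cdot 2^k\}$ uniformly while accounting for edges exactly once.
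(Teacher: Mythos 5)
Your opening reduction is correct: by Burnside's lemma $\mu(g)\,o(g)=\sum_{v}|H_v|$, so the claim is equivalent to $\sum_v |H_v|\le \tfrac{17n}{6}$, and your local observations (the image of $H_v$ in $\Sym(\Gamma(v))$ has order $1$, $2$ or $3$, and the kernel is a $2$-group by the iterated squaring argument) are sound. However, what remains is not a calibration problem but a genuine gap, in two respects. First, you never bound $|H_v|$: your argument allows $|H_v|=2^k$ or $3\cdot 2^k$ for arbitrary $k$, and since each neighbour of $v$ then has stabiliser of order at least $|H_v|/3$, a heavy vertex is surrounded exclusively by heavy vertices, so its surplus charge $6|H_v|-17$ (which is unbounded) has nowhere to go. The paper must first prove $\mso(\Gamma)\le 6$ (Theorem~\ref{prop:djoko}, which goes through Djokovi\'c's amalgam classification and is not a routine local computation), and then Theorem~\ref{the:1reg}, to conclude that every orbit has length $o(g)/i$ with $i\in\{1,2,3,4,6\}$; your proposal silently needs an equivalent input before any charge count can start.

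Second, and decisively, the constant $\tfrac{17}{6}$ cannot be reached by local discharging at all. Writing $\Omega_i$ for the set of vertices lying in orbits of length $o(g)/i$, the crux of the paper's proof is the inequality $|\Omega_2|+|\Omega_4|\le n/3$, obtained by noting that the non-trivial automorphism $g^{o(g)/2}$ fixes $\Omega_2\cup\Omega_4$ pointwise and then invoking the global fixed-point theorem of \cite{fixicity} --- after first disposing of the Split Praeger--Xu graphs (which really do admit non-trivial automorphisms fixing more than $n/3$ of the vertices) by a separate argument using $\mso(\SPX(r,s))=2$ and Lemma~\ref{lem:mu}. A hypothetical configuration in which almost every vertex satisfies $|H_v|=4$ (with a thin chain of orbits leading down to a single regular orbit) is consistent with every neighbour constraint you list and with $|\E(\Gamma)|=3n/2$, yet gives $\mu(g)\approx 4n/o(g)>\tfrac{17n}{6\,o(g)}$; it is excluded only by this global theorem. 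So no assignment of discharging rules based on the local data you have assembled can close the argument --- the missing ingredient is the fixicity bound (or some comparable global structural result), not a cleverer redistribution of charge.
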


Since clearly $\meo(G) \le  \mso(G) n \le |G_\omega| n$ holds for every permutation group $G$ acting on a set of size $n$, one way of bounding the parameter $\meo(\Gamma)$ by a linear function of $|\V(\Gamma)|$ is to bound the parameter $\mso(\Gamma)$ (or even the order of a vertex-stabiliser $\Aut(\Gamma)_v$) by a constant.
Bounding the order of $\Aut(\Gamma)_v$ is a classical topic in algebraic graph theory, going back to the work of Tutte \cite{tutte} where he proved that $|G_v| \le 48$ for every
connected $G$-arc-transitive graph $\Gamma$.
 This result does not generalised to higher non-prime valencies.
However, a long-standing conjecture of Richard Weiss \cite{WeissConj}
 states that for every fixed valence $d$ there exists
a constant $c_d$ such that every connected $G$-arc-transitive $G$-locally-primitive graph
of valence $d$ satisfies $|G_v| \le c_d$, and thus $\meo(G) \le c_d n$. This observation puts
Theorem~\ref{the:WieThomp} into the context of the Weiss Conjecture (and its recent generalisation
 \cite{PSV} to locally-semiprimitive graphs).
 
 On the other hand, 
it is well known that connected vertex-transitive graphs
can have an arbitrarily large vertex-stabiliser 
while still having the order of an element in a vertex-stabiliser 
bounded by a constant (consider, for
example, the family of lexicographic products $C_n[2K_1]$ of a cycle $C_n$
with the edgeless graph on two vertices, where the order of the vertex-stabiliser
grows exponentially with $n$ but the automorphisms fixing a vertex have order at most $4$). 
However, there are no known infinite families of
connected vertex-transitive graphs $\{\Gamma_n : n\in \NN\}$
 of fixed valence $d$  such that $\mso(\Gamma_n) \to \infty$ as $n\to \infty$.
 This prompted Pablo Spiga~\cite{PabloOral} to ask whether for every valence $d$
 there exists a constant $c_d$ such that every connected $d$-valent vertex-transitive graph 
 $\Gamma$ satisfies $\mso(\Gamma) \le c_d$. Observe that a positive answer to this question
 would also resolve Question~\ref{q:meo}.
Here we consider this question in the case of valence $3$ and prove
 the following result (see Section~\ref{sec:Gv} for the proof).

\begin{theorem}
\label{prop:djoko}
If $\Gamma$ is a connected $3$-valent vertex-transitive graph, then $\mso(G_v) \le 6$.
\end{theorem}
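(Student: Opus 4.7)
The plan is to bound $o(g)$ for any $g \in G_v$ by decomposing via its action on the neighbourhood $\Gamma(v)$. Set $K := G_v^{[1]}$, the pointwise stabiliser of $\{v\} \cup \Gamma(v)$, and let $\bar g$ denote the permutation of $\Gamma(v)$ induced by $g$. Since $|\Gamma(v)| = 3$ we have $\bar g \in \Sym_3$, so $o(\bar g) \in \{1,2,3\}$ and $g^{o(\bar g)} \in K$; in particular $o(g)$ divides $o(\bar g)\cdot\exp(K)$. Thus it suffices to establish $\exp(K)\le 2$, which then yields $o(g)\le 3\cdot 2=6$.

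A preliminary observation is that $K$ is a $2$-group: for each $u \in \Gamma(v)$, $K$ acts on the $2$-element set $\Gamma(u)\setminus\{v\}$, and the joint homomorphism $K \to \Sym_2^3$ has kernel $G_v^{[2]}$; iterating along successive distance spheres and invoking connectedness of $\Gamma$ shows $K$ is a $2$-group. To upgrade this to $\exp(K)\le 2$, I would split into two cases according to whether $G := \Aut(\Gamma)$ is arc-transitive. In the arc-transitive case, the Tutte--Djokovi\'c--Miller classification enumerates vertex stabilisers in cubic $s$-arc-regular graphs ($s\le 5$) as $\ZZ_3$, $\Sym_3$, $\D_{12}$, $\Sym_4$ and $\Sym_4\times\ZZ_2$, with respective kernels of the action on $\Gamma(v)$ equal to $1$, $1$, $\ZZ_2$, $V_4$ and $V_4\times\ZZ_2$, each of exponent at most $2$. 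In the non-arc-transitive case, since the valence $3$ is odd, $G$ has exactly two orbits on edges, giving a $G$-invariant partition $E(\Gamma)=E_1\sqcup E_2$ where $E_1$ is a perfect matching and $E_2$ is a $2$-factor of equal-length cycles. Any $h\in K$ fixes the unique $E_1$-neighbour $u$ of $v$ and both $E_2$-neighbours; a walk argument along the $E_2$-cycle $C$ through $v$ then shows $h$ fixes $C$ pointwise, and similarly $h$ fixes the $E_1$-partner of every vertex of $C$. Using vertex-transitivity of $G$ to transport this local picture through the whole graph, one concludes $h^2=1$.

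The principal obstacle is the non-arc-transitive case. Although propagation of fixed vertices along $E_2$-cycles and $E_1$-edges forces $h$ to act trivially on a large $G$-invariant subgraph, it does not obviously rule out $h$ acting as, say, an order-$4$ rotation on some remote $E_2$-cycle while still fixing the closed neighbourhood of $v$. Handling this step requires a structural argument---such as conjugating a suitable power of $h$ by a $G$-element back into the stabiliser of $v$ and using that its induced action on $\Gamma(v)$ must already be trivial, thereby forcing any hypothetical order-$4$ element of $K$ to be an involution. I also expect the bound $6$ to be attained by $K_{3,3}$ (which is $3$-arc-regular with $G_v\cong\D_{12}$), giving a useful consistency check.
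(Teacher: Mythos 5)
Your reduction $o(g)\mid o(\bar g)\cdot\exp(K)$ with $K=G_v^{[1]}$, and your treatment of the arc-transitive case via the Djokovi\'c--Miller list, are both sound and match the paper's handling of that case. The genuine gap is exactly where you yourself locate it: the claim $\exp(G_v^{[1]})\le 2$ in the non-arc-transitive case is never proved, and the repair you sketch does not work. Your propagation argument only shows that $h\in K$ fixes pointwise the $E_2$-cycle through $v$ together with the $E_1$-partners of its vertices; beyond that block $h$ is unconstrained, and ``conjugating a suitable power of $h$ back into $G_v$'' yields no contradiction, because an order-$4$ element of $K$ is already an element of a vertex-stabiliser acting trivially on the corresponding neighbourhood --- which is precisely the situation you are trying to exclude, so the argument is circular. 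Note also that if only $\exp(K)\le 4$ were true, your decomposition would give $o(g)\le 2\cdot 4=8$ in this case, overshooting the target $6$; so your framework genuinely requires the exponent-$2$ statement and not a weaker one. (A smaller inaccuracy: with valence $3$ the edge set may also split into three invariant perfect matchings, in which case $G_v=1$ by connectivity; your dichotomy ``arc-transitive, or matching plus $2$-factor'' omits this case, though it is harmless.)

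The paper's proof of the two-arc-orbit case shows why the missing step is substantive. It contracts the invariant matching to obtain a connected tetravalent arc-transitive graph $\Lambda$, identifies $G_v$ with the subgroup $H$ of a vertex-stabiliser $G_e$ of $\Lambda$ preserving the two blocks of the local $\D_4$-action, and invokes Djokovi\'c's classification of dihedral amalgams of type $(4,2)$: $H$ is generated by involutions $a_0,\dots,a_{n-1}$ whose pairwise commutators are central of order at most $2$, whence $\exp(G_v)\le 4$ and $o(g)\le 4\le 6$ directly, with no reference to $G_v^{[1]}$. Since $(a_ia_j)^2=[a_i,a_j]$, such a stabiliser has elements of order $4$ exactly when it is nonabelian, which the classification permits; to salvage your approach you would have to show that no element of order $4$ can act trivially on $\Gamma(v)$, and nothing in your proposal does this. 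I recommend replacing the $\exp(K)\le 2$ step by the amalgam argument (or by any other argument establishing $\exp(G_v)\le 4$ in the two-arc-orbit case).
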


Finally, let us discuss the relationship between the parameters $\ell(\Gamma)$ and 
$\meo(\Gamma)$.
Note that $\ell(g)$ divides $o(g)$ for every permutation $g$.
When the equality $\ell(g) = o(g)$ holds, we call an orbit of length $\ell(g)$ a 
{\em regular orbit} of $\la g \ra$ (or, to simplify terminology, a regular orbit of $g$).
The question which permutation groups have the property that all of their elements
possess a regular orbit  has received considerable attention in group theory (see, for instance, \cite{regularcycles1,regularcycles2,regularcycles3}), especially in the context of primitive permutation groups. This work culminated in \cite{GuestSpiga},
where it was proved that apart of some exceptional cases, every finite primitive permutation group has this property. 
Since a transitive permutation group $G\le \Sym(\Omega)$ is primitive if and only if 
every graph $\Gamma$ with vertex-set $\Omega$ and $G\le\Aut(\Gamma)$
is connected, it is natural to ask to what extent do results about primitive permutation
group  extend to automorphism groups of connected vertex-transitive graphs. In this paper, we consider this question in the setting of cubic graphs.

\begin{theorem}
\label{the:regularorbits}
If $\Gamma$ is a cubic vertex-transitive graph not isomorphic to $K_{3,3}$, then every automorphism of $\Gamma$ has a regular orbit and thus $o(\Gamma) = \ell(\Gamma)$.
If in addition $\Gamma$ is not isomorphic to $K_4$, the cube graph $Q_3$, the Petersen graph, the Pappus graph or the Heawood graph, then for every  $g \in \Aut(\Gamma)$, either $\langle g \rangle$ is transitive on $\V(\Gamma)$ or every regular orbit of $g$ is adjacent to another regular orbit of $g$.
\end{theorem}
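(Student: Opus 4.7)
The plan is to prove both statements by contradiction. For Part~1, I would combine the orbit-counting bound of Theorem~\ref{the:mainorbits} with the rigid local structure of fixed-point sets of small-order automorphisms on cubic graphs; for Part~2, I would use a local edge-counting analysis around a regular orbit together with the vertex-stabiliser bound from Theorem~\ref{prop:djoko}.

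For Part~1, suppose $g\in\Aut(\Gamma)$ has no regular orbit. Every $\langle g\rangle$-orbit has length at most $o(g)/p$, where $p$ is the smallest prime divisor of $o(g)$, so $n\le\mu(g)\,o(g)/p$; together with Theorem~\ref{the:mainorbits} this gives $p\le 17/6$, forcing $p=2$ and $o(g)$ even. A standard iterative argument using connectedness shows that every prime dividing $|\Aut(\Gamma)_v|$ lies in $\{2,3\}$: any element of prime order $q\ge 5$ fixing $v$ acts trivially on $N(v)$ and, inductively, on all spheres around $v$. Hence $g^{o(g)/q}$ fixes no vertex for primes $q\notin\{2,3\}$ dividing $o(g)$, and replacing $g$ by $g^m$ (where $m$ is the largest divisor of $o(g)$ coprime to $6$) preserves the no-regular-orbit hypothesis, reducing to $o(g)=2^a3^b$ with $a\ge 1$; a parallel argument for $b$ (else $V(\Gamma)=\mathrm{Fix}(g^{o(g)/2})$ would force $g^{o(g)/2}=1$) gives $b\ge 1$. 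Setting $h_p=g^{o(g)/p}$ and $F_p=\mathrm{Fix}(h_p)$, the no-regular-orbit condition becomes $V(\Gamma)=F_2\cup F_3$; partitioning $V(\Gamma)$ into $A=F_2\setminus F_3$, $B=F_3\setminus F_2$, $C=F_2\cap F_3$, and observing that orbits contained in $A$, $B$, $C$ have lengths at most $o(g)/2$, $o(g)/3$, $o(g)/6$ respectively, Theorem~\ref{the:mainorbits} yields $5|A|\ge|B|+19|C|$.

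To close the contradiction I would exploit the rigid local structure of $F_2$ and $F_3$: an involution of a cubic graph fixes a vertex $v$ only with cycle type $(1,1,1)$ or $(1,2)$ on $N(v)$, so $\Gamma[F_2]$ has all vertices of degree $1$ or $3$; similarly an order-three automorphism fixes $v$ only with cycle type $(1,1,1)$ or $(3)$, so $\Gamma[F_3]$ has all vertices of degree $0$ or $3$. Combined with connectivity, vertex-transitivity of $\Gamma$, and the inequality above, the only cubic vertex-transitive graph for which $V(\Gamma)=F_2\cup F_3$ is consistent with these local constraints is $K_{3,3}$, contradicting $\Gamma\not\cong K_{3,3}$. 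Establishing this identification is the main obstacle of the whole proof: the rigid combinatorial data forced by $V(\Gamma)=F_2\cup F_3$ must be leveraged to rule out every other vertex-transitive cubic graph.

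For Part~2, assume $\Gamma$ is not on the listed exception list and $g$ has a regular orbit $O$ with no adjacent regular orbit, while $\langle g\rangle$ is not transitive on $V(\Gamma)$. By $\langle g\rangle$-transitivity on $O$, the number $d_O$ of neighbours of $v\in O$ lying in $O$ is constant; if $d_O=3$ then $N(O)\subseteq O$ and connectedness of $\Gamma$ forces $O=V(\Gamma)$, contradicting non-transitivity, so $d_O\le 2$. For each orbit $O'\ne O$ adjacent to $O$, with $|O'|=\ell'<o(g)$, double-counting edges between $O$ and $O'$ shows each $w\in O'$ has $o(g)/\ell'$ neighbours in $O$; since $\Gamma$ is cubic, $\ell'\in\{o(g)/2,o(g)/3\}$ and the local bipartite geometry between $O$ and $O'$ is sharply constrained. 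A finite case analysis over $d_O\in\{0,1,2\}$, the lengths and multiplicities of adjacent orbits, and the bound $\meo(\Aut(\Gamma)_v)\le 6$ from Theorem~\ref{prop:djoko} then pins $\Gamma$ down to one of $K_4$, $Q_3$, the Petersen, Pappus or Heawood graph, contradicting the exclusion hypothesis.
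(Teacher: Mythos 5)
Your plan for Part 1 has two problems, one structural and one substantive. The structural one is circularity: you invoke Theorem~\ref{the:mainorbits} to get $p\le 17/6$ and later to get $5|A|\ge |B|+19|C|$, but the paper's proof of Theorem~\ref{the:mainorbits} is built on Theorem~\ref{the:1reg} (which \emph{is} Part 1 of the present statement): it assumes $o(g)=\ell(g)$ and that every orbit has size $\ell(g)/i$ with $i\in\{1,2,3,4,6\}$ before any counting starts. So the bound $\mu(g)\le \frac{17n}{6\,o(g)}$ is not available to you here. The substantive problem is that even granting that bound, you stop exactly where the work begins: you reduce to $o(g)=2^a3^b$, $V(\Gamma)=F_2\cup F_3$, record the degree constraints on $\Gamma[F_2]$ and $\Gamma[F_3]$, and then assert that ``the only cubic vertex-transitive graph consistent with these constraints is $K_{3,3}$,'' explicitly flagging this as the main obstacle. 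That identification is the theorem; a proof that leaves it as an obstacle is not a proof. (For what it is worth, the missing step can be closed, but by the arithmetic of orbit lengths rather than by fixed-point sets: connectivity forces $F_3$ to be an independent set, so every vertex of $B=F_3\setminus F_2$ has all three neighbours in $A=F_2\setminus F_3$; but an orbit in $B$ has length $2^a3^j$ with $j<b$ and an orbit in $A$ has length $2^i3^b$ with $i<a$, and no such pair has ratio in $\{1,2,3,\tfrac12,\tfrac13\}$ as required by Lemma~\ref{lem:orbitratio}/Lemma~\ref{lem:no23}; hence $B=\emptyset$, $F_2=V(\Gamma)$ and $g^{o(g)/2}=1$, a contradiction. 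This is essentially why the paper never argues by contradiction at all: it proves directly in $\Gamma/\langle g\rangle$ that adjacent orbit sizes have ratio $1$, $2$ or $3$, with ratio $3{:}2$ forcing $K_{3,3}$, deduces that every orbit size divides the maximal one, and concludes $o(g)=\ell(g)$, so the longest orbit is regular.)

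For Part 2 your skeleton coincides with the paper's (a case analysis on the orbits adjacent in $\Gamma/\langle g\rangle$ to a regular orbit $u^{\langle g\rangle}$, using the fact that each neighbouring non-regular orbit has size $o(g)/2$ or $o(g)/3$), but again all the content is deferred to ``a finite case analysis \ldots pins $\Gamma$ down.'' The analysis is not routine: in the hardest subcases (where $u^{\langle g\rangle}$ has a single neighbouring orbit) one must first prove that $\Gamma$ is arc-transitive, then exhibit explicit short cycles to bound the girth by $6$, and then invoke external classification results --- the list of cubic arc-transitive graphs of girth at most $5$ (Lemma~\ref{lem:smallgirthAT}) and the classification of symmetric cubic graphs of girth $6$ in which every edge lies on at least three hexagons. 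These inputs are precisely what produce the exceptional list $K_4$, $Q_3$, Petersen, Pappus, Heawood, and none of them appears in your outline; Theorem~\ref{prop:djoko} alone cannot identify the exceptions. As it stands, both parts are programmes rather than proofs.
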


Finally, we prove that excluding $K_{3,3}$ and an exceptional family of Split Praeger-Xu graphs, 
regular orbits of an automorphism cover a large part of a cubic vertex-transitive graph.

\begin{theorem}
\label{the:regorbitsratio}
Let $\Gamma$ be a cubic vertex-transitive graph of order $n$ and let $g \in \Aut(\Gamma)$. If $\Gamma$ is not isomorphic to $K_{3,3}$ or a Split Praeger-Xu graph (defined in Section \ref{sec:bound}), then at least $\frac{5}{12}n$ vertices of $\Gamma$ lie on a regular orbit of $g$.
\end{theorem}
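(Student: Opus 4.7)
Let $m=o(g)$, and write $R$ and $N$ for the sets of vertices on regular and non-regular orbits of $\la g\ra$, respectively; the goal is to show $|N|\le 7n/12$. My plan is to combine the orbit-counting bound of Theorem~\ref{the:mainorbits} with the vertex-stabiliser bound of Theorem~\ref{prop:djoko}, and then to handle the extremal case structurally, showing that if the counting bound alone does not suffice then $\Gamma$ must be a Split Praeger-Xu graph.

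An easy reduction disposes of the case $m\ge 5n/12$: Theorem~\ref{the:regularorbits} guarantees at least one regular orbit of $\la g\ra$ (since $\Gamma\not\cong K_{3,3}$), so $|R|\ge m\ge 5n/12$ and we are done. Assume henceforth that $m<5n/12$. For each divisor $k$ of $m$, let $y_k$ denote the number of $\la g\ra$-orbits of length $m/k$. A vertex on such an orbit is fixed by the element $g^{m/k}$ of order $k$, which therefore lies in a vertex stabiliser; by Theorem~\ref{prop:djoko} we have $k\le 6$, and since an order-$5$ automorphism fixing a vertex of a cubic graph must act trivially on its three neighbours and hence be trivial by connectedness, we also have $k\ne 5$. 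Applying Theorem~\ref{the:mainorbits} in the form $\sum_k y_k\le 17n/(6m)$ and subtracting the identity $\sum_k y_k/k=n/m$ produces
\[
\sum_{k\in\{2,3,4,6\}}\frac{k-1}{k}\,y_k\;\le\;\frac{11n}{6m}.
\]

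This counting bound alone is not sharp enough to force $|N|=\sum_{k\ge 2}y_k(m/k)\le 7n/12$, because the $k=2$ term contributes the most to $|N|$ per unit of the left-hand side. The main obstacle is therefore to show that a large $k=2$ contribution forces $\Gamma$ to be a Split Praeger-Xu graph. I would study the involution $h=g^{m/2}$, whose fixed-point set ${\rm Fix}(h)$ contains every $\la g\ra$-orbit of length dividing $m/2$. In a cubic graph, at each fixed vertex the three neighbours are permuted by $h$ in transpositions and fixed points, so either one or three of them are fixed; in particular, the subgraph induced on ${\rm Fix}(h)$ has vertex-degrees in $\{1,3\}$. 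When $|{\rm Fix}(h)|$ is large enough to push $|N|$ beyond $7n/12$, the rigid local structure of ${\rm Fix}(h)$ combined with vertex-transitivity and connectedness should force the quotient $\Gamma/\la h\ra$ to be a small vertex-transitive multigraph carrying a distinguished cycle; this cycle then pulls back to exhibit $\Gamma$ as a Praeger-Xu graph split along its prescribed involution, so $\Gamma$ is $K_{3,3}$ or a Split Praeger-Xu graph, a contradiction. The contributions from $k\in\{3,4,6\}$ are bounded directly by the displayed inequality (their $(k-1)/k$ coefficient is at least $2/3$), and together with the second part of Theorem~\ref{the:regularorbits} (which supplies additional regular orbits beyond the first, boosting $|R|$ past $m$ outside the exceptional list) this should close the argument.
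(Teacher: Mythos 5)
Your overall strategy is reasonable in outline, but there are two genuine gaps. The first is the key step: you need to know that the involution $h=g^{m/2}$ (which fixes every vertex lying on an orbit of length $m/2$ or $m/4$ pointwise) cannot fix too many vertices. The paper gets this by citing \cite[Theorem 1.2]{fixicity}: a non-trivial automorphism of a cubic vertex-transitive graph on $n>20$ vertices fixing more than $n/3$ vertices forces the graph to be a Split Praeger-Xu graph. This is a deep result occupying a separate paper; your sketch (``the rigid local structure of ${\rm Fix}(h)$ \dots should force the quotient \dots to be a small vertex-transitive multigraph carrying a distinguished cycle'') is not a proof of it, and the degree-$\{1,3\}$ observation about the subgraph induced on ${\rm Fix}(h)$ is nowhere near sufficient to recover it. You must either cite this theorem or accept that the hardest part of the argument is missing.

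The second gap is quantitative: even granting $|{\rm Fix}(h)|\le n/3$, your bookkeeping does not reach $5/12$. Writing $\Omega_i$ for the set of vertices on orbits of length $m/i$, the fixicity bound gives $|\Omega_2|+|\Omega_4|\le n/3$, but your displayed inequality $\sum_{k}\frac{k-1}{k}y_k\le \frac{11n}{6m}$ only yields $|\Omega_3|\le \frac{11}{12}n$ and $|\Omega_6|\le \frac{11}{30}n$, which is useless. The paper instead uses the adjacency structure of orbits (its Lemma~\ref{lem:pairs}): an orbit of length $m/3$ is adjacent only to one orbit of length $m$, and an orbit of length $m/6$ forces an adjacent orbit of length $m/2$, giving $N_3\le N_1$ and $N_6\le N_2$. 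Combined with $N_2\le\frac{2}{3}\cdot\frac{n}{m}$ (from the fixicity inequality) these yield $|\Omega_3|\le\frac{1}{3}|\Omega_1|$ and $|\Omega_6|\le\frac{1}{9}n$, whence $|\Omega_1|\ge n-\frac{1}{3}n-\frac{1}{9}n-\frac{1}{3}|\Omega_1|$ and $|\Omega_1|\ge\frac{5}{12}n$. Neither Theorem~\ref{the:mainorbits} nor the second part of Theorem~\ref{the:regularorbits} (which only guarantees a second regular orbit, i.e.\ $|R|\ge 2m$, still possibly far below $\frac{5}{12}n$) can substitute for these orbit-adjacency constraints.
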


 After proving some auxiliary results in Section~\ref{sec:general}, we then prove
Theorem~\ref{prop:djoko} in Section~\ref{sec:Gv},
 Theorem~\ref{the:regularorbits} and Theorem~\ref{the:34} for the case of cubic graphs in Section~\ref{sec:cubic}, 
 Theorem~\ref{the:mainorbits} and  Theorem~\ref{the:regorbitsratio} in Section~\ref{sec:bound},
 and
 Theorem~\ref{the:34} for the case of quartic graphs in Section~\ref{sec:quartic}.


%


\section{Auxiliary results}
\label{sec:general}

In this section we prove a few easy auxiliary results about the parameter introduced in Section~\ref{sec:intro}. We begin with an observation about a relation between these parameters. 
 
\begin{lemma}
\label{lem:basic1}
Let $G$ be a transitive permutation group on a set $\Omega$, let $g\in G$, let
$s(g)$ be the length of a shortest orbit of the cyclic group $\langle g \rangle$
and let $s(G) = \max\{ s(g) : g\in G \}$. Then:
\begin{align}
\label{x1}
  \ell(g) & \le o(g) \le  s(g) \meo(G_\omega)  \le  \ell(g)\meo(G_\omega); \\
\label{x2}
 \ell(G) & \le  \meo(G) \le s(G)\meo(G_\omega) \le \ell(G) \meo(G_\omega),
\end{align}
where $G_\omega$ is the stabiliser of an arbitrary element $\omega\in \Omega$.
\end{lemma}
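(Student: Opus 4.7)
The proof breaks naturally into two halves, one for each chain of inequalities, with the second being a direct consequence of the first.

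For the chain \eqref{x1}, the leftmost inequality $\ell(g) \le o(g)$ is immediate, since every orbit length of $\langle g\rangle$ divides $|\langle g\rangle| = o(g)$. For the middle inequality $o(g) \le s(g)\,\meo(G_\omega)$, my plan is to pick $\omega \in \Omega$ lying in a shortest orbit of $\langle g\rangle$, so that $|\omega^{\langle g\rangle}| = s(g)$. Then $g^{s(g)}$ fixes $\omega$ and hence belongs to $G_\omega$. Since $s(g)$ divides $o(g)$ (again because it is an orbit length of $\langle g\rangle$), the order of $g^{s(g)}$ is exactly $o(g)/s(g)$, and this must be bounded above by $\meo(G_\omega)$; rearranging gives the claim. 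The rightmost inequality $s(g)\,\meo(G_\omega) \le \ell(g)\,\meo(G_\omega)$ just uses $s(g) \le \ell(g)$, which holds because the shortest orbit is no longer than the longest.

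For the chain \eqref{x2}, the key observation is that since $G$ is transitive on $\Omega$, any two point stabilisers $G_\omega$ and $G_{\omega'}$ are conjugate in $G$, and therefore $\meo(G_\omega)$ is independent of the choice of $\omega$. This lets me take the maximum over $g \in G$ in each of the inequalities of \eqref{x1} without having to track a particular choice of $\omega$: the bound $o(g) \le s(g)\,\meo(G_\omega)$ yields $\meo(G) \le s(G)\,\meo(G_\omega)$, and similarly $\ell(g) \le o(g)$ yields $\ell(G) \le \meo(G)$, while $s(g) \le \ell(g)$ yields $s(G) \le \ell(G)$ and hence the final inequality of \eqref{x2}.

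There is no real obstacle here; the only point requiring a moment's care is the divisibility $s(g)\mid o(g)$, needed to compute $o(g^{s(g)}) = o(g)/s(g)$ cleanly, and the transitivity argument ensuring that $\meo(G_\omega)$ is a well-defined invariant of $G$ rather than of a particular $\omega$.
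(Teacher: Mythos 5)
Your proposal is correct and follows essentially the same route as the paper: both arguments locate a point $\omega$ in a shortest orbit, identify $g^{s(g)}$ as an element of $G_\omega$ of order $o(g)/s(g)$ (the paper phrases this via the orbit--stabiliser lemma applied to $C_\omega=\langle g^{s(g)}\rangle$), bound that order by $\meo(G_\omega)$, and then deduce \eqref{x2} by maximising over $g\in G$ using the fact that transitivity makes $\meo(G_\omega)$ independent of $\omega$. No gaps.
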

\begin{proof}
Let $\omega \in \Omega$, let $C=\langle g \rangle$, let $t = |\omega^C|$ and
 let $m=\meo(G_\omega)$; since $G$ is transitive on $\Omega$, the parameter $m$ does not depend on the choice of $\omega$.
Since $s(g) \le \ell(g) \le o(g)$ holds, in order to prove (\ref{x1}), it suffices to show that
$o(g) \le s(g) m$. To do this, 
observe that $C_\omega = \langle g^t \rangle$, implying that
$|C_\omega| = o(g^t) \le \meo(G_\omega) = m$.
By the orbit-stabiliser lemma applied to the action of $C$ on $\Omega$ be may thus conclude that
$$
o(g) = |C| = |C_\omega|\, |\omega^C|  \le m |\omega^C|.
$$ 
If  $\omega$ was chosen to be in a shortest orbit of $C$, then $|\omega^C| = s(g)$ and thus $o(g) \le m s(g)$, as required. Part (\ref{x2}) now follows easily from (\ref{x1}) if we  maximise the expressions over all $g\in G$, starting with the rightmost expression 
in the chain of inequalities of (\ref{x1}) and then proceeding towards the left-hand side.
\end{proof}

The following lemma and its corollary give us an elementary but useful tool that can be used to bound the order of a permutation by a linear function of the degree of the permutation group. Corollary~\ref{cor:TW} is then used in Section~\ref{sec:TW} to prove Theorem~\ref{the:WieThomp}

\begin{lemma}
\label{lem:mingcd}
If $g$ is a permutation on a set $\Omega$ and $C=\langle g \rangle$, then 
\begin{equation*}
 \frac{o(g)}{\ell(g)} = \frac{ \min \{ |C_\omega| : \omega \in \Omega\} }{ \gcd \{ |C_\omega| : \omega \in \Omega\} }.
\end{equation*}
\end{lemma}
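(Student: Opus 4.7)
The plan is to reduce the identity to two observations about the cyclic group $C=\langle g\rangle$: that orbit lengths are determined by stabiliser orders, and that the gcd of the stabiliser orders is always $1$.

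First I would note that since $C$ is cyclic of order $o(g)$, for each $\omega\in\Omega$ the stabiliser $C_\omega$ is the unique subgroup of $C$ of its order, and the orbit--stabiliser lemma gives $|\omega^C| = o(g)/|C_\omega|$. Therefore, writing $m = \min\{|C_\omega| : \omega\in\Omega\}$, the longest orbit has length
\[
\ell(g) \;=\; \max_{\omega\in\Omega} |\omega^C| \;=\; \frac{o(g)}{m},
\]
so that $o(g)/\ell(g) = m = \min\{|C_\omega| : \omega\in\Omega\}$. This already identifies the numerator on the right-hand side; it remains to show that the denominator is $1$.

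For the second step, I would use the fact that $o(g)$ equals the least common multiple of the lengths of the cycles of $g$, i.e.\ the lcm of the orbit lengths of $C$ on $\Omega$. Setting $n = o(g)$ and letting $a_\omega = |C_\omega|$ (so $a_\omega \mid n$), this becomes
\[
n \;=\; \lcm_{\omega\in\Omega}\bigl(n/a_\omega\bigr).
\]
A short prime-by-prime calculation (for each prime $p$, $v_p(n/a_\omega) = v_p(n) - v_p(a_\omega)$, whose maximum over $\omega$ is $v_p(n) - \min_\omega v_p(a_\omega) = v_p(n/\gcd_\omega a_\omega)$) yields the identity
\[
\lcm_{\omega\in\Omega}(n/a_\omega) \;=\; n/\gcd_{\omega\in\Omega}(a_\omega).
\]
Combining with the previous display forces $\gcd_\omega |C_\omega| = 1$.

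Putting the two pieces together gives
\[
\frac{o(g)}{\ell(g)} \;=\; m \;=\; \frac{\min_\omega |C_\omega|}{\gcd_\omega |C_\omega|},
\]
as claimed. There is no real obstacle here; the only point requiring slight care is the lcm/gcd interchange for divisors of a fixed integer, which is just the prime-factorisation argument above.
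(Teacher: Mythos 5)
Your proof is correct and uses essentially the same ingredients as the paper's: the orbit--stabiliser lemma to write orbit lengths as $|C|/|C_\omega|$, and the identity $\lcm_\omega(|C|/|C_\omega|) = |C|/\gcd_\omega|C_\omega|$. The only cosmetic difference is that you make explicit the (true) observation that $\gcd_\omega|C_\omega|=1$, whereas the paper keeps the gcd symbolic and simply divides the two expressions for $o(g)$ and $\ell(g)$.
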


\begin{proof}
Recall that $o(g) = \lcm \{|\omega^C| : \omega \in \Omega\}$ and that
$|\omega^C| = |C| / |C_\omega|$, implying that
$$
o(g) = \lcm \{\frac{|C|} { |C_\omega|} : \omega \in \Omega\} = \frac{|C|}{\gcd \{ |C_\omega| : \omega \in \Omega\}}.
$$
Similarly, $\ell(g) =  \max \{|\omega^C| : \omega \in \Omega\}$ and thus
$$
\ell(g) =  \frac{|C|}{\min \{ |C_\omega| : \omega \in \Omega\}}, 
$$
as claimed.
\end{proof}

Recall that the exponent $\exp(G)$ of a group $G$ is the minimum positive integer $e$ such that $g^e =1$ for every $g\in G$. Clearly $\exp(G)$ equals the least common multiple of
the orders of cyclic subgroup of $G$.

\begin{corollary}
\label{cor:TW}
Let $G$ be a transitive permutation group on a set $\Omega$, let $\omega\in \Omega$, let $p$ be a prime and let $k$ be an integer coprime to $p$ such that $\exp(G_\omega) = k p^\alpha$ for some $\alpha \ge 1$. Then 
$$
 o(g) \le k \ell(g)
$$
for every $g\in G$.
In particular, if $G_\omega$ is a $p$-group, then every element of $G$ has a regular orbit.
\end{corollary}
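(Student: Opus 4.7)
The plan is to derive the corollary directly from Lemma~\ref{lem:mingcd} by separately analysing the $p$-part and $p'$-part of each stabiliser order $|C_\omega|$, where $C = \langle g \rangle$.

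First I would observe that, for every $\omega \in \Omega$, the group $C_\omega$ is a cyclic subgroup of $G_\omega$, so $|C_\omega|$ divides $\exp(G_\omega) = k p^\alpha$. Write $|C_\omega| = k_\omega p^{\alpha_\omega}$ with $\gcd(k_\omega,p)=1$; then $k_\omega$ divides $k$ and $\alpha_\omega \le \alpha$. Next, let $\beta = \min_{\omega \in \Omega} \alpha_\omega$ and pick some $\omega_0$ realising this minimum. Since $p^\beta$ divides $|C_\omega|$ for every $\omega$, we have $p^\beta \mid \gcd\{|C_\omega| : \omega\in\Omega\}$, and in particular $\gcd\{|C_\omega|\} \ge p^\beta$.

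On the other hand, $\min\{|C_\omega|\} \le |C_{\omega_0}| = k_{\omega_0} p^\beta \le k p^\beta$. Combining these two inequalities yields
\[
\frac{\min\{|C_\omega|\}}{\gcd\{|C_\omega|\}} \le \frac{k p^\beta}{p^\beta} = k,
\]
and Lemma~\ref{lem:mingcd} then gives $o(g)/\ell(g) \le k$, i.e., $o(g) \le k\,\ell(g)$, as claimed. For the final clause, if $G_\omega$ is a $p$-group then $\exp(G_\omega)$ is a pure power of $p$, so one may take $k=1$, which forces $o(g) \le \ell(g)$; combined with the trivial inequality $\ell(g) \le o(g)$, this gives $o(g) = \ell(g)$, meaning every element of $G$ has a regular orbit.

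There is no real obstacle here; the only subtlety is to be careful that $\alpha_\omega$ is defined as the exact $p$-adic valuation of $|C_\omega|$ (not merely an upper bound), so that $p^\beta$ genuinely divides every $|C_\omega|$ and bounds the $p$-part of the gcd from below, while simultaneously the element $\omega_0$ minimising $\alpha_\omega$ gives the desired upper bound on the overall minimum.
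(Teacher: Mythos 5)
Your proof is correct and follows essentially the same route as the paper's: both arguments bound $\min\{|C_\omega|\}/\gcd\{|C_\omega|\}$ by splitting each $|C_\omega|$ into its $p$-part and $p'$-part, noting that the minimal $p$-adic valuation $p^\beta$ divides the gcd while the witness $\omega_0$ realising that minimum has $|C_{\omega_0}| \le k p^\beta$, and then invoking Lemma~\ref{lem:mingcd}. The only cosmetic difference is that the paper indexes orbit representatives by increasing stabiliser order before selecting the one of minimal $p$-exponent, whereas you select it directly; the final clause about regular orbits is handled identically.
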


\begin{proof}
Let $\omega\in \Omega$.
Since $C_\omega$ is a cyclic subgroup of $G_\omega$, we see that
$|C_\omega| $ divides $\exp(G_\omega)$ and thus $|C_\omega|  = t p^\beta$ for some $t\le k$,  $\gcd(t,p) =1$, and $\beta\le \alpha$.
 Now let $\{\omega_1, \ldots, \omega_m\}$ be a complete set of representatives of 
the orbits of $C$ on $\Omega$ indexed in such a way that
$$
|C_{\omega_1}|\, \le \, |C_{\omega_2}|\, \le \, \ldots \, \le \, |C_{\omega_m}|.
$$
Write 
 $|C_{\omega_i}| = t_i p^{\alpha_i}$ with $\gcd(t_i,p) = 1$. As observed above, it follows 
 that $t_i\le k$. Let $j\in\{1,\ldots,m\}$ be such that
$\alpha_j = \min\{ \alpha_i : i\in\{1,\ldots,m\}\}$.
Then
$$
 \gcd \{ |C_\omega| : \omega \in \Omega\} =  \gcd \{ t_ip^{\alpha_i} : i\in \{1,\ldots, m\}\} \ge p^{\alpha_j}.
 $$
On the other hand, $\min \{ |C_\omega| : \omega \in \Omega\} = t_1 p^{\alpha_1} \le t_j p^{\alpha_j} \le k p^{\alpha_j}$. The result then follows by Lemma~\ref{lem:mingcd}.
\end{proof}

\begin{lemma}
\label{lem:mu}
If $G$ is a transitive permutation group on a set $\Omega$ of cardinality $n$ and $g\in G$, then
\begin{equation}
\label{eq:mu}
\mu(g) \le \left( \frac{n}{\ell(g)} -1\right) \mso(G) + 1 \quad \hbox{ and } \quad
\mu(G) \le \left( \frac{n}{\ell(G)} -1\right) \mso(G) + 1.
\end{equation}
\end{lemma}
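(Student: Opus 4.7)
The plan is to partition $\Omega$ into $\langle g \rangle$-orbits, compute or bound the length of each, and sum. The key observation is that if $C = \langle g \rangle$, then for any $\omega \in \Omega$, the point stabiliser $C_\omega = C \cap G_\omega$ is a cyclic subgroup of $G_\omega$, so its order is at most $\meo(G_\omega) \le \mso(G)$. By the orbit--stabiliser lemma, the length of the $C$-orbit of $\omega$ is $|C|/|C_\omega| = o(g)/|C_\omega|$, which is therefore at least $o(g)/\mso(G)$. Since $o(g) \ge \ell(g)$, this means every $C$-orbit has length at least $\ell(g)/\mso(G)$.

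Now I would pick an $\omega_0 \in \Omega$ whose $C$-orbit has length $\ell(g)$, and partition $\Omega$ into $\omega_0^C$ together with the remaining $\mu(g)-1$ orbits of $C$. Counting cardinalities gives
\[
n \; \ge \; \ell(g) \, + \, (\mu(g)-1)\cdot \frac{\ell(g)}{\mso(G)}.
\]
Dividing through by $\ell(g)$ and rearranging yields $\mu(g) \le \bigl(\tfrac{n}{\ell(g)}-1\bigr)\mso(G) + 1$, which is the first inequality.

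For the second inequality, I would simply specialise to a group element that realises the maximum longest-orbit length. Choose $g_0 \in G$ with $\ell(g_0) = \ell(G)$. Applying the first inequality to $g_0$ gives
\[
\mu(G) \; \le \; \mu(g_0) \; \le \; \left(\frac{n}{\ell(G)}-1\right)\mso(G) + 1,
\]
where the leftmost inequality is just the definition $\mu(G) = \min_{g\in G}\mu(g)$.

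There is no real obstacle here; the one thing worth being careful about is the step $o(g)/\mso(G) \ge \ell(g)/\mso(G)$, which depends only on $\ell(g) \le o(g)$ (already noted in Lemma~\ref{lem:basic1}) and ensures that the ``short'' orbits are still long enough to force the claimed bound on their number.
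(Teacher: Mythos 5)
Your argument is essentially the paper's: both lower-bound every $\langle g\rangle$-orbit by $\ell(g)/\mso(G)$ via the orbit--stabiliser lemma (the paper routes this through the inequality $s(g)\ge \ell(g)/\meo(G_\omega)$ from Lemma~\ref{lem:basic1}), split off one longest orbit, and count. Your derivation of the second inequality, by applying the first to an element attaining $\ell(G)$, is the correct specialisation (and in fact cleaner than the paper's phrasing, which suggests choosing $g$ minimising $\mu(g)$ instead).
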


\begin{proof}
As in Lemma~\ref{lem:basic1}, let $s(g)$ denote the shortest orbits of the group $\langle g \rangle$
and let $s(g)=n_1 \le n_2 \le \ldots \le n_m=\ell(g)$ be the lengths of the orbits of $\langle g \rangle$, counted with multiplicity. Then $\mu(g) = m$ and
$$s(g) + n_2 +\ldots +n_{m-1} + \ell(g) = n,$$
 implying that
$(m-1) s(g) \le s(g) + n_2 +\ldots +n_{m-1} \le n-\ell(g)$.
Now recall that $s(g) \ge \ell(g)/\meo(G_\omega)$ and thus
$$
 m \le \frac{n-\ell(g)}{s(g)} + 1 \le \frac{n-\ell(g)}{\ell(g)}\meo(G_\omega) + 1,
$$
proving the first inequality in (\ref{eq:mu}). The second inequality now follows
if we choose $g$ to be an element of $G$ with the smallest number of orbits.
\end{proof}

\section{Vertex-stabilisers of cubic vertex-transitive graphs and proof of Theorem~\ref{prop:djoko}}
\label{sec:Gv}

In this section we prove Theorem \ref{prop:djoko}, which states that 
$
\mso(\Gamma) \le 6
$
for every connected $3$-valent vertex-transitive graph.
The proof is based on the work of Djokovi\'c on
$4$-valent arc-transitive graphs \cite{djoko}, and the splitting and merging operation, introduced in \cite{CVT}, which links the cubic vertex- but not arc-transitive graphs with a class of $4$-valent arc-transitive graphs. 

\begin{figure}[h!]
\centering
\includegraphics[width=0.8\textwidth]{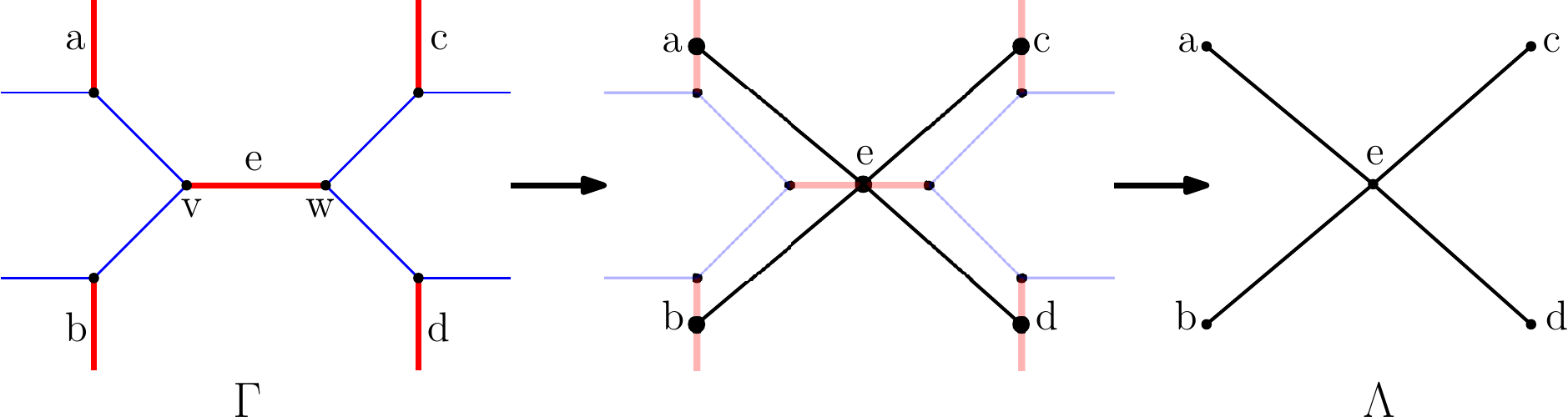}
\label{fig:bounds}
\caption{The neighbourhood of an edge with endpoints $v$ and $w$ in $\Gamma$, and the neighbourhood of the corresponding vertex in $\Lambda$.}
\end{figure}



For the rest of this section, let $\Gamma$ be a cubic vertex-transitive graph, let $G=\Aut(\Gamma)$ and let $v$ be a vertex of $\Gamma$. We need to prove that
$o(g) \le 6$ for every $g\in G_v$.

First, observe that if $\Gamma$ is isomorphic to a prism or a M\"{o}bius ladder, then the result follows, as a vertex stabilizer is isomorphic to either $S_3$, if $\Gamma$ is arc-transitive, or $\ZZ_2$ otherwise. We may thus assume without loss of generality that $\Gamma$ is not isomorphic to a prism or a M\"{o}bius ladder. 

Suppose that $G$ has $m$ orbits in its action on the arcs of $\Gamma$. Then, since $\Gamma$ is vertex-transitive, we have $m \in \{1,2,3\}$.

If $m=3$, then the connectivity of $\Gamma$ implies that the stabilizer $G_v$ in $G$ of any vertex $v \in \V(\Gamma)$ is trivial, and the result follows.

If $m=1$, then $\Gamma$ is arc-transitive. By a result of Djokovi\'c and Miller \cite{DjM} (which is based on the celebrated work of Tutte \cite{tutte} on cubic arc-transitive graphs), $G_v$ is isomorphic to either $\ZZ_3$, $S_3$, $S_3 \times S_2$, $S_4$ or $S_4 \times S_2$. In none of the five possible cases, an element of $G_v$ has order greater than $6$.

If $m=2$, then $G_v$ must fix an edge $x$ incident to $v$. Let $\mathcal{T}$ be the orbit of $x$ under the action of $G$ and observe that $\mathcal{T}$ is a perfect matching in $\Gamma$. Moreover, $G$ is transitive on both $\A(\Gamma) \setminus \mathcal{T}^*$ and $\mathcal{T}^*$, where $\mathcal{T}^*$ denotes the arcs of $\Gamma$ underlying an edge in $\mathcal{T}$.

 We can thus construct a connected tetravalent graph $\Lambda$ as follows. Let $\mathcal{T}$ be the vertex-set of $\Lambda$ and let $uv \in \mathcal{T}$ be adjacent to $ab \in \mathcal{T}$ in $\Lambda$ if and only if there exist an edge $xy \in \E(\Gamma)$ such that $x \in \{u,v\}$ and $y \in \{a,b\}$. Informally, $\Lambda$ is constructed from $\Gamma$ by contracting every edge in $\mathcal{T}$ (see Construction 7 in Section 4 of \cite{CVT} for details about this construction and the graph $\Lambda$).
%
Since $\Gamma$ is neither a prism or a M\"{o}bius ladder, then by \cite[Lemma 9]{CVT} $\Lambda$ is a simple graph. 
Clearly every $g\in G$ induces an automorphism of $\Lambda$. Since $G$ is transitive on $\A(\Gamma)\setminus \mathcal{T}^*$, we see that $G$ acts arc-transitively on $\Lambda$. 

Let $e \in \mathcal{T}$ have endpoints $v$ and $w$ and assume the notation in Figure \ref{fig:bounds}. Then $N:=\{a, b, c, d\}$ is the set of neighbours of $e$ in $\Lambda$. 
Since $G$ acts transitively on the arcs of $\Lambda$, it follows that the permutation group
$G_e^{N}$ induced by the action of $G_e$ on the set $N$ is transitive. Observe also that
$\{\{a,b\},\{c,d\}\}$ is a $G_e$-invariant partition of $N$ and that $G_v$ is isomorphic to the subgroup of $G_{e}$ that fixes $\{a,b\}$ and $\{c,d\}$ set-wise. Hence, $G_e^N$ is permutation isomorphic to
one of the three imprimitive transitive groups of degree four:  $\D_4$, $\ZZ_4$ or $\ZZ_2^2$.

If  $G_{e}^{N}\cong \ZZ_4$ or $\ZZ_2^2$, then it follows from the connectivity of $\Lambda$ that $G_{e} \cong \ZZ_4$ or $G_{e} \cong \ZZ_2^2$, respectively.   
Suppose now that  $G_{e}^N \cong \D_4$. Then, $(G_{e}, G_{(e,a)}, G_{ea})$ (where $(e,a)$ denote the arc in $\Lambda$ and $ea$ denotes the underlying undirected edge) is a dihedral amalgam of type $(4,2)$ (see \cite{djoko}). If follows from the main theorem of \cite{djoko} that the subgroup $H$ of $G_{e}$ that fixes both $\{a,b\}$ and $\{c,d\}$ set-wise satisfies the following:
\begin{eqnarray*}
H &=& \langle a_0, a_1, a_2, \ldots, a_{n-1} \rangle ,\\
a_i^2 &=& 1, \qquad 0 \leq i \leq n-1, \\
\text{[}a_i,a_j] &=& r_{i,j} \qquad 0 \leq i,j \leq n-1,
\end{eqnarray*}
where each $r_{i,j}$ is in the center $\Z(H)$ of $H$ and has order at most $2$. In particular $N/\Z(H)$ is an elementary abelian $2$-group implying that $g^2 \in \Z(H)$ for every $g \in H$. However since $N$ is generated by involutions, the exponent of $\Z(H)$ is at most $2$, implying that the exponent of $N$ is at most $4$. Then, since $G_v \cong H$, we see that $G_v$ has exponent at most $4$.

We have shown that in all possible cases, the order of an element $g \in G_v$ has order at most $6$, thus concluding the proof of Theorem~\ref{prop:djoko}.


\section{Regular orbits of cubic vertex-transitive graphs and proof of Theorem~\ref{the:regularorbits}}
\label{sec:cubic}

 For a graph $\Gamma$ and a group $G \leq \Aut(\Gamma)$, we let $\Gamma/G$ be the graph whose vertices are the $G$-orbits of vertices in $\Gamma$ and two such $G$-orbits $X$ and $Y$ are adjacent if there is a vertex in $X$ adjacent to a vertex in $Y$. We will show that with a few exceptions, every automorphism of a cubic vertex-transitive graph admits a regular orbit (Theorem \ref{the:1reg}), and if the group generated by such an automorphism is non-transitive, then every regular orbit is adjacent to another regular orbit. These two results combined give us Theorem \ref{the:regularorbits}. We would like to point out that Theorem \ref{the:1reg} follows at once from Corollary \ref{cor:TW} if we restrict it to cubic graphs that are vertex- but not arc-transitive (as in this case the stabiliser of any vertex is a $2$-group). However, the arc-transitive case requires a little bit more work. We will need to prove a series of rather simple, but useful lemmas, about the relative sizes of adjacent orbits. 
 
 In what follows, we let $K_4$ denote the complete graph on $4$ vertices, $K_{3,3}$ the complete bipartite graph with $3$ vertices in each part, and $Q_3$ denote the tridimensional cube graph. For integers $n$ and $k$, the generalised Petersen graph ${\rm{GP}}(n,k)$ is the graph with vertex set $\{x_i,y_i \mid i \in \ZZ_n\}$ and with edges of the form $x_ix_{i+1}$, $x_iy_i$ and $y_iy_{i+k}$. The well-known Pappus graph and Heawood graph are depicted in Figure \ref{fig:examples}. The following three lemmas are useful special cases of \cite[Theorem 5]{signature}.


\begin{lemma}
\label{lem:smallgirthAT}
If $\Gamma$ is a cubic arc-transitive graph of girth smaller than $6$, then $\Gamma$ is isomorphic to one of the following: $K_4$, $K_{3,3}$, $Q_3$, the Petersen Graph or the dodecahedron ${\rm{GP}}(10,2)$.
\end{lemma}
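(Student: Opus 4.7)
The strategy is a short case analysis on the girth $g \in \{3,4,5\}$, using arc-transitivity at each stage to keep the local configuration rigid; since the argument runs identically through all three girth values as in \cite[Theorem 5]{signature}, the cleanest route is to invoke that theorem once its hypotheses are matched, which is essentially what the proof should do. I will nonetheless sketch the case-by-case combinatorics, because it is short and makes it transparent why no other graph can appear.

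For $g = 3$, arc-transitivity forces the number $t$ of triangles through each edge to be constant; in a cubic graph $t \le 2$, and $t \ge 1$ by the girth hypothesis. If $t = 2$, then an edge together with its two common neighbours induces a $K_4$, and connectedness forces $\Gamma \cong K_4$. If $t = 1$, pick a triangle $uvw$ and the third neighbour $x$ of $v$; the edge $vx$ would lie on some triangle $vxy$ with $y \in \{u,w\}$, producing a second triangle through $uv$ or $vw$ and contradicting $t = 1$.

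For $g = 4$, fix an arc $(v,a)$. Arc-transitivity makes the number of $4$-cycles through $(v,a)$ a constant, and a short bookkeeping on neighbours-of-neighbours shows that every pair of distinct neighbours of $v$ shares the same number $k \in \{1,2\}$ of common neighbours other than $v$. The case $k = 2$ forces $\Gamma \cong K_{3,3}$, while $k = 1$, together with connectedness and arc-transitivity, pins $\Gamma$ down as $Q_3$.

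For $g = 5$, the cubic Moore bound gives $|\V(\Gamma)| \ge 10$, with equality characterising the Petersen graph. For larger orders one counts $5$-cycles through each arc (again constant by arc-transitivity) and exploits the resulting local rigidity; among cubic arc-transitive graphs of girth $5$ this leaves only the dodecahedron ${\rm{GP}}(10,2)$. This girth-$5$ case is the main obstacle: either one performs the detailed local counting by hand, or, more expediently, one appeals to \cite[Theorem 5]{signature}, whose output specialised to girth less than $6$ is precisely the claimed list.
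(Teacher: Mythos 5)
The paper gives no argument at all for this lemma: it simply notes that it is a special case of \cite[Theorem 5]{signature}, which is exactly the route you ultimately take, so your proposal matches the paper's approach. Your supplementary case sketches for girth $3$ and $4$ are sound, but be aware that the girth-$5$ case (showing Petersen and ${\rm GP}(10,2)$ are the only cubic arc-transitive examples) is the genuinely hard step and your sketch of it is not self-contained --- it still rests on the cited theorem, which is fine, since that is all the paper does.
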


\begin{lemma}
\label{lem:4cycles}
Let $\Gamma$ be a cubic vertex-transitive graph of girth $4$. If there is a vertex $u$ such that all three edges incident to $u$ lie on two distinct $4$-cycles, then $\Gamma$ is isomorphic to $K_{3,3}$ or $Q_3$.
\end{lemma}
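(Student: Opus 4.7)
Let $N(u)=\{a,b,c\}$. Since $\Gamma$ has girth $4$, no two of $a,b,c$ are adjacent, so every $4$-cycle through $u$ has the form $u-x-z-y-u$ with $\{x,y\}\subseteq\{a,b,c\}$ and $z$ a common neighbour of $x$ and $y$ distinct from $u$. Setting $n_{xy}:=|N(x)\cap N(y)\setminus\{u\}|$ for each pair $\{x,y\}\subseteq\{a,b,c\}$, the hypothesis that $ua,ub,uc$ each lie on at least two $4$-cycles translates to $n_{ab}+n_{ac}\ge 2$, $n_{ab}+n_{bc}\ge 2$, $n_{ac}+n_{bc}\ge 2$, while cubicity gives $n_{xy}\le 2$. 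The plan is to split on whether some $n_{xy}$ equals $2$ or all of them equal $1$, and in each branch to leverage vertex-transitivity to transport the hypothesis to a second vertex.

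\textbf{Case A: some $n_{xy}=2$.} Say $n_{ab}=2$, so $N(a)=N(b)=\{u,p,q\}$ for some $p,q$. Counting $4$-cycles through $uc$ shows $n_{ac}=n_{bc}=|N(c)\cap\{p,q\}|\ge 1$. If $|N(c)\cap\{p,q\}|=1$, say $c\sim p$, then $N(p)=\{a,b,c\}$ and I would apply vertex-transitivity at $c$: the edge from $c$ to its third neighbour $c'$ must lie on at least two $4$-cycles, forcing $c'$ adjacent to $a$ or $b$; but $N(a)=N(b)=\{u,p,q\}$, so $c'=q$, contradicting $q\notin N(c)$. Hence $N(c)=\{u,p,q\}$ too, and $\{u,p,q\}\cup\{a,b,c\}$ induces a $K_{3,3}$ in which every vertex already has full degree $3$; by connectivity, $\Gamma=K_{3,3}$.

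\textbf{Case B: all $n_{xy}\le 1$.} Summing the inequalities forces $n_{ab}=n_{ac}=n_{bc}=1$; let $p,q,r$ be the unique common neighbours (other than $u$) of $\{a,b\},\{a,c\},\{b,c\}$ respectively. If two of them coincide, say $p=q$, then this vertex is adjacent to all of $a,b,c$ and thus equals $r$ as well, so either $p=q=r$ or $p,q,r$ are pairwise distinct. When $p=q=r=:v$, one has $N(v)=\{a,b,c\}=N(u)$; each of $a,b,c$ has a third neighbour $a',b',c'$, and applying vertex-transitivity at $a$ to the edge $aa'$ (followed by the same analysis at $b$ or $c$) forces $a'=b'=c'$, yielding once more $\Gamma=K_{3,3}$. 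When $p,q,r$ are distinct, $N(a)=\{u,p,q\}$, $N(b)=\{u,p,r\}$, $N(c)=\{u,q,r\}$. Applying vertex-transitivity at $p$ and examining the $4$-cycles through the edge from $p$ to its third neighbour $p'$, the only way to obtain two such cycles is that $p'\sim q$ and $p'\sim r$, so $N(p')=\{p,q,r\}$. The analogous analyses at $q$ and $r$ identify their third neighbours with $p'$, producing an $8$-vertex subgraph whose adjacencies are exactly those of $Q_3$, and by connectivity $\Gamma=Q_3$.

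\textbf{Main obstacle.} The bookkeeping is delicate rather than deep: in the critical steps of Case A and of the distinct-$p,q,r$ branch of Case B, one must repeatedly transport the ``every edge on two $4$-cycles'' hypothesis from $u$ to another vertex (such as $c$, $a$ or $p$), verify that the common neighbours forced by the hypothesis are compatible with the already-fixed neighbourhoods, and rule out all intermediate ``half-full'' configurations in favour of the two fully symmetric graphs $K_{3,3}$ and $Q_3$. Keeping track of which potential identifications would create triangles (forbidden by girth $4$) or would force a vertex to have degree greater than $3$ is where the argument requires care.
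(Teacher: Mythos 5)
Your argument is correct, but it takes a genuinely different route from the paper: the paper offers no proof of this lemma at all, citing it (together with Lemmas~\ref{lem:smallgirthAT} and~\ref{lem:3cycles}) as a special case of the classification in \cite[Theorem 5]{signature} of girth-regular cubic graphs. What you give instead is a self-contained local analysis: encoding the hypothesis as $n_{xy}+n_{xz}\ge 2$ for the three pairs of neighbours of $u$, splitting on whether some $n_{xy}=2$, and then using vertex-transitivity only to transport the ``every incident edge lies on two $4$-cycles'' property to one or two further vertices. This buys independence from the external classification at the cost of some bookkeeping, and I checked the bookkeeping goes through: in Case A the forced identification $c'=q$ does contradict $q\notin N(c)$; in the $p=q=r$ branch the third neighbours are indeed forced to coincide; and in the distinct-$p,q,r$ branch the eight determined neighbourhoods are exactly those of $Q_3$. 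One step you should make explicit in the last branch: when counting the $4$-cycles through $pp'$ you get $|N(p')\cap\{u,q\}|+|N(p')\cap\{u,r\}|\ge 2$, and you must rule out that this is achieved via $p'\sim u$; this follows because $N(u)=\{a,b,c\}$ while $p'\notin\{a,b\}$ by choice and $p'\neq c$ since $p\notin N(c)=\{u,q,r\}$. With that observation (and the routine girth/degree checks you already flag) the proof is complete.
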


\begin{lemma}
\label{lem:3cycles}
Let $\Gamma$ be a cubic vertex-transitive graph of girth $3$. If any of the following conditions hold, then $\Gamma \cong K_4$:
\begin{enumerate}
\item there is a vertex $u$ such that all three edges incident to $u$ lie on a $3$-cycle;
\item there is an edge $e$ that lies on two distinct $3$-cycles.
\end{enumerate}
\end{lemma}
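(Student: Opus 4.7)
The plan is to analyse both conditions through a local case analysis of the neighbourhood of a vertex on a triangle, using the cubic valence to severely restrict the possibilities, and then invoking vertex-transitivity to rule out the residual bad cases.

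For part (1), let $u$ have neighbours $v_1,v_2,v_3$. The assumption says each edge $uv_i$ lies on a $3$-cycle, so there is a common neighbour of $u$ and $v_i$ different from $v_i$; since $N(u)=\{v_1,v_2,v_3\}$, this forces $v_i$ to be adjacent to some $v_j$ with $j\ne i$. Hence the subgraph induced on $\{v_1,v_2,v_3\}$ has minimum degree at least $1$, so it is either a triangle or a path $P_3$. In the triangle case, $\{u,v_1,v_2,v_3\}$ induces $K_4$, and since $\Gamma$ is cubic and connected we conclude $\Gamma\cong K_4$. In the path case, say $v_1$--$v_2$--$v_3$ with $v_1$ and $v_3$ non-adjacent, apply the vertex-transitivity hypothesis of (1) to $v_1$: the third edge at $v_1$ must also lie on a $3$-cycle, and tracing through the cubic constraint at $v_1$ (whose neighbours are $u,v_2$ and one more vertex) will force the missing edge $v_1v_3$, collapsing back to the triangle case.

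For part (2), write the two distinct $3$-cycles through $e=uv$ as $uvx$ and $uvy$ with $x\ne y$. Then $N(u)=\{v,x,y\}$ and $N(v)=\{u,x,y\}$, exhausting the valences of $u$ and $v$. Let $z$ be the unique third neighbour of $x$. If $z=y$ then $\{u,v,x,y\}$ induces $K_4$ and $\Gamma\cong K_4$ by connectedness and cubicity, so it suffices to rule out $z\ne y$. The plan is to use vertex-transitivity in the form: the number $t$ of triangles through each vertex is a graph-theoretic invariant. Through $u$, the triangles are $uvx$, $uvy$, and possibly $uxy$ (if $xy$ is an edge), giving $t\in\{2,3\}$. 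Through $x$, a triangle $xab$ with $a,b\in N(x)=\{u,v,z\}$ requires $ab$ to be an edge; we have $uv$ an edge, but when $z\ne y$ the vertex $z$ lies outside $N(u)\cup N(v)=\{v,x,y\}\cup\{u,x,y\}$, so neither $uz$ nor $vz$ is an edge. Hence only one triangle passes through $x$, contradicting $t\ge 2$. Therefore $z=y$ and $\Gamma\cong K_4$.

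There is no serious obstacle here: the cubic valence is so restrictive that once a vertex sits on the prescribed triangles, its entire neighbourhood is determined up to a single binary choice, and the counting invariant provided by vertex-transitivity closes the remaining case. The only small care needed is to ensure that the vertex whose triangle-count we compare really is in the same orbit as $u$; but since $\Gamma$ is vertex-transitive, any vertex will do.
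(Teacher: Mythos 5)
Your proof is correct, but it takes a different route from the paper: the paper does not prove this lemma at all, instead quoting it as a special case of the classification of girth-regular cubic graphs in the cited reference (Theorem~5 of Poto\v{c}nik--Vidali, \emph{Girth-regular graphs}). Your argument is a short, self-contained local analysis, which is arguably preferable for a reader who does not want to unpack that classification. Both halves check out. In part (2), once $N(u)=\{v,x,y\}$ and $N(v)=\{u,x,y\}$ are pinned down, your triangle-count comparison is clean: if the third neighbour $z$ of $x$ differs from $y$, then $z\notin N(u)\cup N(v)$ and $x\not\sim y$, so $x$ lies on exactly one triangle while $u$ lies on two, contradicting the fact that the number of triangles through a vertex is an invariant of a vertex-transitive graph. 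In part (1), the triangle case is immediate, and the path case, which you leave slightly compressed (``tracing through the cubic constraint\dots will force the missing edge''), does close as claimed: the third neighbour $w$ of $v_1$ must share a neighbour with $v_1$ inside $N(v_1)=\{u,v_2,w\}$, and since $N(u)=\{v_1,v_2,v_3\}$ and $N(v_2)=\{u,v_1,v_3\}$ are already full, the only candidate is $w=v_3$, restoring the triangle on $\{v_1,v_2,v_3\}$. (Alternatively, in the path case the edge $uv_2$ already lies on the two distinct triangles $uv_1v_2$ and $uv_2v_3$, so part (1) reduces to part (2).) The only point worth stating explicitly is the one you flag at the end: the hypotheses of (1) and (2) concern a single vertex or edge, and you are using vertex-transitivity to transfer the triangle-forcing property to $v_1$ (in part (1)) and to compare triangle counts at $u$ and $x$ (in part (2)); since these properties are preserved by automorphisms, this is legitimate.
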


\begin{lemma}
\label{lem:no23}
Let $\Gamma$ be a cubic vertex-transitive graph, let $g \in \Aut(\Gamma)$ and let $G = \langle g \rangle$. Let $u$ and $v$ be two adjacent vertices belonging to distinct $G$-orbits. If $i|u^G|=3|v^G|$ for some $i \in \{1,2\}$, then $\Gamma$ is arc-transitive. In particular, if $i = 2$ then $\Gamma \cong K_{3,3}$.
\end{lemma}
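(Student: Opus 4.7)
My plan has three phases, using $U := u^G$ and $V := v^G$. Phase one is a double-counting: let $a$ (respectively $b$) be the number of neighbours in $V$ of each vertex of $U$ (respectively, in $U$ of each vertex of $V$), well-defined by $G$-transitivity on each orbit. The identity $a|U|=b|V|$ combined with $i|U|=3|V|$ and $1\le a,b\le 3$ forces $b=3$ and $a=i$, so every vertex of $V$ has all three of its neighbours in $U$.

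Phase two establishes arc-transitivity by contradiction. If $\Aut(\Gamma)_v$ has three singleton orbits on $N(v)$, a standard connectedness argument (propagating fixed-point behaviour from $v$ to its neighbours via vertex-transitivity) gives $\Aut(\Gamma)_v=1$, so $\Aut(\Gamma)$ is regular on $\V(\Gamma)$, every $G$-orbit has length $|G|$, and this contradicts $|U|\ne|V|$. If the orbits of $\Aut(\Gamma)_v$ on $N(v)$ have sizes $\{1,2\}$, the distinguished fixed neighbour at each vertex assembles into an $\Aut(\Gamma)$-invariant perfect matching $M$ (one verifies $M$ is a matching by noting that the arc-orbit of $(v,m(v))$ must be reversal-invariant for cardinality reasons); the $M$-partner map $\mu\colon V\to U$ is then $G$-equivariant and injective, so $\mu(V)=\mu(v)^G$ is a single $G$-orbit inside $U$, which therefore equals $U$, forcing $|V|=|U|$, another contradiction.

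Phase three handles the refinement in case $i=2$. Write $|V|=2m$ and $|U|=3m$. A direct orbit-stabiliser computation with $G_v\cap G_{u_0}=\langle g^{\lcm(2m,3m)}\rangle=\langle g^{6m}\rangle$ shows that the $6m$ arcs from $V$ to $U$ form a single $G$-orbit; hence $G_v$ is transitive on $N(v)$, so $3\mid|G_v|$, and combined with Theorem~\ref{prop:djoko} we get $|G_v|\in\{3,6\}$ and $|G|\in\{6m,12m\}$. Fix $u_0\in N(v)$; since $g^{3m}\in G_{u_0}$, the identity $u_0^{g^{3m}}=u_0$ allows reducing exponents of $g$ at $u_0$ modulo $3m$, giving $N(v)=\{u_0,u_0^{g^m},u_0^{g^{2m}}\}$ and then $N(v^{g^m})=N(v)^{g^m}=N(v)$. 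Since $g^m\notin\langle g^{2m}\rangle=G_v$, the vertex $v':=v^{g^m}$ is distinct from $v$ but has $N(v')=N(v)$; so $v$ and $v'$ are non-adjacent twins in $V$. This yields a $4$-cycle, so the girth of $\Gamma$ is at most $4$, and Lemma~\ref{lem:smallgirthAT} then forces $\Gamma\in\{K_4,K_{3,3},Q_3\}$. But $K_4$ is excluded since $|\V(\Gamma)|\ge|V|+|U|=5m\ge 5$, and $Q_3$ is excluded because any two distinct vertices of $Q_3$ share at most two common neighbours. Therefore $\Gamma\cong K_{3,3}$.

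The main technical hurdle is the twins construction in Phase three. The identity $u_0^{g^{3m}}=u_0$ unifies the two cases $|G|\in\{6m,12m\}$: once exponents of $g$ acting on $u_0$ can be reduced modulo $3m$, the $g^m$-invariance of $\{u_0,u_0^{g^m},u_0^{g^{2m}}\}$ is immediate, and twins appear with no further case split.
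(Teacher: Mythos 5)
Your proof is correct, but it reaches arc-transitivity by a genuinely different (and longer) route than the paper, while your Phase three essentially rediscovers the paper's argument for the $i=2$ refinement. The paper's proof is a single direct computation valid for both $i\in\{1,2\}$: writing $n=|v^G|/i$, the relations $v^{g^{2n}}=v^{g^{4n}}=v$ and $u\sim v$ force $\Gamma(v)=\{u,u^{g^{n}},u^{g^{2n}}\}$, and then $g^{2n}\in G_v$ visibly acts as a $3$-cycle on $\Gamma(v)$, so local transitivity (hence edge- and arc-transitivity) is immediate; for $i=2$ the same identification of $\Gamma(v)$ gives $\Gamma(v^{g^n})=\Gamma(v)$ with $v^{g^n}\neq v$, a $K_{2,3}$, and the girth lemma finishes. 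You instead rule out the two non-transitive local actions: the all-singletons case via the standard rigidity argument, and the $\{1,2\}$ case via the $\Aut(\Gamma)$-invariant perfect matching whose partner map forces $|u^G|=|v^G|$. Both steps are sound (your reversal-invariance justification that $m$ is an involution is the right cardinality argument, and Phase one's double count correctly puts all three neighbours of $v$ inside $u^G$, which is what makes the matching land in $u^G$). What the paper's approach buys is brevity and uniformity in $i$; what yours buys is a reusable structural dichotomy for non-arc-transitive cubic vertex-transitive graphs, at the cost of extra machinery. Note also that your own Phase three orbit-stabiliser count (arcs from $v^G$ to $u^G$ form one $G$-orbit, hence $G_v$ is transitive on $N(v)$) works verbatim for $i=1$ as well, since then $G_v\cap G_{u_0}=\langle g^{3m}\rangle$ has index $3m=b|v^G|$ in $G$; running it for both values of $i$ would let you delete Phase two entirely and would recover the paper's proof. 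Finally, the appeal to Theorem~\ref{prop:djoko} to get $|G_v|\in\{3,6\}$ is unnecessary: nothing downstream uses it, since $N(v)=u_0^{\langle g^{2m}\rangle}$ together with $u_0^{g^{3m}}=u_0$ already yields $N(v)=\{u_0,u_0^{g^{m}},u_0^{g^{2m}}\}$ regardless of $|G_v|$.
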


\begin{proof}
Let $n \in \ZZ$ be such that $3n = |u^G|$ and $i\cdot n = |v^G|$. Observe that $v = v^{g^{2n}} = v^{g^{4n}}$ but that $u$, $u^{g^{2n}}$ and $u^{g^{4n}}$ are all distinct. Moreover, since $u \sim v$, we have $u^{g^{2n}} \sim v^{g^{2n}}$ and $u^{g^{4n}} \sim v^{g^{4n}}$. Then $v$ is adjacent to $u$, $u^{g^{2n}}$ and $u^{g^{4n}}$ and since $u^{g^{4n}}=u^{g^{3n}g^n}=u^{g^{n}}$, we see that $\Gamma(v) = \{u,u^{g^{n}},u^{g^{2n}}\}$. The group $\langle g^{2n} \rangle$ then fixes $v$ while permuting its neighbours. It follows that all three edges incident to $v$ belong to the same $\langle g^{2n} \rangle$-orbit. Then, since $\Gamma$ is vertex-transitive, it must also be edge-transitive. Moreover, a cubic graph that is both vertex- and edge-transitive must necessarily be arc-transitive.

Now, suppose $i=2$. Then $|v^G| = 2n$ and thus $v \neq v^{g^n}$. We have shown that $\Gamma(v) = \{u,u^{g^{n}},u^{g^{2n}}\}$, and thus $\Gamma(v^{g^n}) = \{u^{g^{n}},u^{g^{2n}},u^{g^{3n}}\}=\{u,u^{g^{n}},u^{g^{2n}}\}$. Then every vertex in $\{v,v^{g^n}\}$ is adjacent to every vertex in $\{u,u^{g^{n}},u^{g^{2n}}\}$. That is, $\Gamma$ contains a copy of $K_{2,3}$ and by Lemma \ref{lem:smallgirthAT}, $\Gamma$ is isomorphic to $K_4$, $K_{3,3}$ or $Q$. However, neither $K_4$ nor $Q$ contain a subgraph isomorphic to $K_{2,3}$. We conclude that $\Gamma \cong K_{3,3}$.
\end{proof}

\begin{lemma}
\label{lem:orbitratio}
Let $\Gamma$ be a cubic vertex-transitive graph other than $K_{3,3,}$. Let $G \leq \Aut(\Gamma)$ be cyclic, and let $u$ and $v$ be two adjacent vertices belonging to distinct $G$-orbits. If $|u^G| \geq |v^G|$, then $|u^G| = i|v^G|$ for some $i \in \{1,2,3\}$.
 Moreover, if $i \neq 1$, then every vertex in $u^G$ has exactly one neighbour in $v^G$, while a vertex in $v^G$ has $i$ neighbours in $u^G$.
\end{lemma}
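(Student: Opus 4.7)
The plan is to analyse the bipartite subgraph of $\Gamma$ spanned by the edges between the two orbits $u^G$ and $v^G$, enumerate by hand the possible ratios $|u^G|/|v^G|$, and dispatch the only non-integer ratio via Lemma~\ref{lem:no23}.

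First I would set $m = |u^G|$ and $n = |v^G|$, and, using that $G$ is transitive on each orbit, introduce the well-defined quantities $a$ (the number of neighbours in $v^G$ of any given vertex of $u^G$) and $b$ (the number of neighbours in $u^G$ of any given vertex of $v^G$). Double-counting the edges between the two orbits yields $am = bn$; the adjacency $u \sim v$ forces $a, b \ge 1$; cubicness of $\Gamma$ forces $a, b \le 3$; and the hypothesis $m \ge n$ forces $b \ge a$.

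It then remains to run a short case-check on the pairs $(a,b)$ with $1 \le a \le b \le 3$. The diagonal pairs $(1,1)$, $(2,2)$, $(3,3)$ all give $m = n$ and hence $i = 1$; the pair $(1,2)$ gives $m = 2n$ and matches the ``moreover'' claim with $i = b = 2$; the pair $(1,3)$ gives $m = 3n$ and matches the ``moreover'' claim with $i = b = 3$. The one remaining possibility is $(a,b) = (2,3)$, which yields $2m = 3n$, equivalently $2|u^G| = 3|v^G|$. Applying Lemma~\ref{lem:no23} with $i = 2$ then forces $\Gamma \cong K_{3,3}$, contradicting the hypothesis.

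The only obstacle worth flagging is the $(2,3)$ case. With Lemma~\ref{lem:no23} in hand it dies immediately, but its content is precisely what carries the argument: the index inequality $|G_v| > |G_u|$ forbids $G_v$ from fixing any point of $\Gamma(v)$, so the cyclic group $G_v$ must act as a $3$-cycle on $\Gamma(v)$; since $G$ is abelian, the second neighbour of $u$ in $v^G$ is then forced to share $v$'s neighbourhood; and finally vertex-transitivity together with the $3$-cycle structure produces three mutually twin vertices, which in a connected cubic graph can occur only for $K_{3,3}$.
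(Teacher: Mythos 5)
Your proof is correct. It takes a mildly different route from the paper's: where you double-count the edges of the bipartite subgraph between $u^G$ and $v^G$ via the two well-defined degrees $a$ and $b$ (well defined because $G$ is transitive on each of its orbits and preserves the other orbit setwise), the paper fixes a generator $g$ of $G$ and argues by divisibility --- writing $m=|v^G|$, it observes that $v$ is adjacent to every $u^{g^{jm}}$, lets $\lambda$ (your $b$) be the number of neighbours of $v$ in $u^G$, and deduces $|u^G|$ divides $\lambda m$, which produces the same list of ratios $1,2,3,\tfrac{3}{2}$, with $\tfrac{3}{2}$ dispatched by Lemma~\ref{lem:no23} exactly as you do. Your version buys two things. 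First, the opening part of your argument uses nothing about $G$ beyond transitivity on its orbits, so cyclicity enters only through the appeal to Lemma~\ref{lem:no23}. Second, the ``moreover'' clause falls out of the same case analysis for free, since $am=bn$ together with $m=in$ forces $(a,b)=(1,i)$; the paper instead proves this clause by a separate contradiction argument (a putative second neighbour $v^{g^j}$ of $u$ in $v^G$, with $0<j<m$, would be adjacent to the four distinct vertices $u$, $u^{g^m}$, $u^{g^j}$, $u^{g^{j+m}}$, violating cubicity), though only after it, too, counts the $im$ edges between the orbits to get the last sentence of the statement. What the paper's version buys in exchange is the explicit adjacency pattern $v\sim u^{g^{jm}}$, which is reused in the arguments of the subsequent lemmas.
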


\begin{proof}
Let $n = |u^G|$ and $m = |v^G|$ and note that $u^{g^i} = u$ if and only if $n \vert i$. Similarly, $v^{g^i}=v$ if and only if $m \vert i$. It follows that $v$ is adjacent to $u^{g^{i m}}$ for all $i \in \ZZ$. Let $\lambda$ be the number of neighbours of $v$ in $u^G$. Clearly, $\lambda \in \{1,2,3\}$. 
If $\lambda = 1$, then $u^{g^m} =u$ and thus $n | m$. Since by hypothesis $n \geq m$, we see that $n=m$.
If $\lambda = 2$, then $u^{g^{2m}} =u$. It follows that $n | 2m$ and thus $n = 2m$ or $n = m$.
Finally, if $\lambda = 3$, then $u^{g^{3m}} = u$ and $n \vert 3m$. Then $n = 3m$, $n=\frac{3}{2}m$ or $n = m$. However, if $n=\frac{3}{2}m$, then by Lemma \ref{lem:no23}, $\Gamma \cong K_{3,3}$, a contradiction. It follows that $n = 3m$ or $n = m$.
Therefore $|u^G| = i|v^G|$ for some $i \in \{1,2,3\}$. 

To prove the second part of the statement, suppose $i\neq 1$ and that $u$ is adjacent to a vertex $v^{g^j}$ for some $0< j < m$. Note that $v^{g^j}$ is also adjacent to $u^{g^m}$ , $u^{g^j}$ and $u^{g^j+m}$. Since $0< j < m$ and $|u^G|\geq 2m$, we see that the vertices in $\{u, u^{g^m}, u^{g^j},u^{g^j+m}\}$ are all distinct. Thus, $v$ has four neighbours, contradicting $\Gamma$ being a cubic graph. It follows that a vertex in $u^G$ has exactly one neighbour in $v^G$. In particular, this means there are exactly $|u^G|=im$ edges between $u^G$ and $v^G$. and thus a vertex in $v^G$ has exactly $i$ neighbours in $u^G$.
\end{proof}

\begin{corollary}
\label{cor:13}
Let $\Gamma$ be a cubic vertex-transitive graph, let $G= \langle g \rangle \in \Aut(\Gamma)$ and let $u$ and $v$ be two adjacent vertices. If $|u^G|=3|v^G|$ then $u^G$ is the only neighbour of $v^G$ in $\Gamma/G$.
\end{corollary}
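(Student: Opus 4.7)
The plan is to obtain this essentially directly from Lemma \ref{lem:orbitratio} applied with $i=3$, together with the cubic hypothesis. Since $|u^G|=3|v^G|$ we have $|u^G|\geq |v^G|$ with ratio $3$, so provided $\Gamma\not\cong K_{3,3}$, the ``moreover'' part of Lemma \ref{lem:orbitratio} guarantees that every vertex in $v^G$ has exactly three neighbours inside $u^G$. Since $\Gamma$ is $3$-valent, this exhausts every neighbour of every vertex in $v^G$, and therefore in the quotient $\Gamma/G$ the orbit $v^G$ can have no neighbour other than $u^G$.

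The only additional task is to dispose of the case $\Gamma\cong K_{3,3}$, which is explicitly excluded from Lemma \ref{lem:orbitratio}. Here $|u^G|=3|v^G|$, together with the disjointness of $u^G$ and $v^G$ (forced by their distinct cardinalities) and $|\V(K_{3,3})|=6$, pins down $|v^G|=1$ and $|u^G|=3$. Thus $g$ fixes $v$, and being an automorphism of $K_{3,3}$ it must preserve the bipartition, so $u^G$ lies inside the part opposite to $v$; comparing cardinalities, $u^G$ coincides with that whole part, which in turn coincides with the neighbourhood of $v$. Hence all neighbours of $v$ lie in $u^G$, and the conclusion holds in this case as well.

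There is no real obstacle: the corollary is a short consequence of Lemma \ref{lem:orbitratio} plus the $3$-valence hypothesis, with only the minor nuisance of handling $K_{3,3}$ separately by a direct counting argument.
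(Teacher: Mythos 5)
Your proof is correct and follows essentially the same route as the paper: apply the ``moreover'' part of Lemma~\ref{lem:orbitratio} with $i=3$ to see that every vertex of $v^G$ already has all three of its neighbours in $u^G$. Your separate treatment of $K_{3,3}$ (which Lemma~\ref{lem:orbitratio} excludes but the corollary does not) is a welcome extra care that the paper's one-line proof silently omits, and your counting argument for that case is correct.
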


\begin{proof}
By Lemma~\ref{lem:orbitratio}, every vertex in $v^G$ has three neighbours in $u^G$, and thus no neighbours in orbits other than $u^G$. Hence $u^G$ is the only neighbours of $v^G$ in $\Gamma/G$.
\end{proof}

In the next theorem, we prove the first part of Theorem~\ref{the:regularorbits}.

\begin{theorem}
\label{the:1reg}
If $\Gamma$ is a cubic vertex-transitive graph other than $K_{3,3,}$, then every $g \in \Aut(\Gamma)$ admits a regular orbit and every orbit of $\langle g \rangle$
has size $\frac{\ell(g)}{k}$ for some $k \in \{1,2,3,4,6\}$.
\end{theorem}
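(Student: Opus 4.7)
The plan is to analyse the possible values of $|C_v|$, where $C = \la g \ra$. By Theorem~\ref{prop:djoko} we have $|C_v| \le 6$, and no cubic vertex-stabiliser contains an element of order $5$: in the non-arc-transitive case $G_v$ is a $2$-group (of exponent at most $4$, from the proof of Theorem~\ref{prop:djoko}), and in the arc-transitive case $G_v$ is one of $\ZZ_3, S_3, S_3 \times S_2, S_4, S_4 \times S_2$ by Djokovi\'c--Miller. Hence $|C_v| \in \{1, 2, 3, 4, 6\}$ for every $v$, which (once a regular orbit is known to exist) immediately yields the second assertion, since every orbit has size $o(g)/|C_v| = \ell(g)/|C_v|$.

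It remains to exhibit a vertex with $|C_v|=1$. Suppose for contradiction that $|C_v| \ge 2$ for every $v$. Since $\la g \ra$ acts faithfully on $\V(\Gamma)$, we have $\gcd_v |C_v| = 1$: otherwise any prime $p$ dividing the gcd would force $g^{o(g)/p}$ to lie in every $C_v$ and hence fix every vertex, contradicting the order of $g$. By Lemma~\ref{lem:orbitratio} (using $\Gamma \not\cong K_{3,3}$), adjacent vertices in distinct orbits have $|C|$-values whose ratio lies in $\{1, 2, 3, 1/2, 1/3\}$. Consider the ``ratio graph'' on $\{2, 3, 4, 6\}$ with edges $\{2,4\}, \{2,6\}, \{3,6\}$; the set of attained values must induce a connected subgraph by connectedness of $\Gamma$, and a short enumeration reveals that the only connected subsets of $\{2,3,4,6\}$ with gcd $1$ are $\{2, 3, 6\}$ and $\{2, 3, 4, 6\}$. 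In particular, some vertex has $|C_v|=6$.

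The main step is to rule out this possibility. A vertex $v$ with $|C_v|=6$ forces $\Gamma$ to be arc-transitive with $G_v \in \{S_3 \times S_2, S_4 \times S_2\}$, since these are the only cubic vertex-stabilisers containing an element of order $6$. Let $h = g^{o(g)/6}$, so $C_v = \la h \ra$ and $|h|=6$. The kernel $K_v$ of the action of $G_v$ on $\Gamma(v)$ equals $\{1\}\times S_2$ or $V_4 \times S_2$ respectively, and in either case it is a $2$-group containing no element of order $3$. The image of $h$ in $G_v^{\Gamma(v)} \le \Sym(\Gamma(v)) = S_3$ has order dividing $6$, and one checks it cannot be $1$ (else $h$ would lie in the $2$-group $K_v$), nor $2$ (else $h^2$, of order $3$, would lie in $K_v$), nor $6$ (absent in $S_3$); hence it equals $3$, so $h^3 = g^{o(g)/2}$ lies in $K_v$ and fixes every neighbour of $v$. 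Since $g^{o(g)/2}$ fixes a vertex $u$ if and only if $2 \mid |C_u|$, every neighbour of $v$ has $|C_u| \in \{2,4,6\}$, and in particular no neighbour of $v$ has label $3$.

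Since in the ratio graph label $3$ is adjacent only to labels $3$ and $6$, this forbids any edge in $\Gamma$ between a label-$3$ vertex and a non-label-$3$ vertex. The set of label-$3$ vertices is therefore a union of connected components of $\Gamma$, which together with the non-emptiness of both labels $3$ and $2$ contradicts the connectedness of $\Gamma$. The principal obstacle is the kernel analysis for $|C_v|=6$ in the third paragraph; the other cases---in particular the non-arc-transitive one, where Corollary~\ref{cor:TW} applied with $p = 2$ to the $2$-group $G_v$ yields a regular orbit directly---require little additional work.
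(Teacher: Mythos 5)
Your proof is correct, but it takes a genuinely different route from the one in the paper. The paper's argument is global: it fixes a vertex $u$ on a longest orbit, walks along a path in the quotient $\Gamma/\langle g\rangle$, and uses Corollary~\ref{cor:13} (an orbit of size $3m$ adjacent to one of size $m$ is the \emph{only} neighbour of the smaller orbit, hence an endpoint of the path) to show that every orbit has size $\ell(g)/2^r$ or $\ell(g)/(3\cdot 2^r)$; the longest orbit is therefore regular, and Theorem~\ref{prop:djoko} enters only at the very end to cut the divisor down to $\{1,2,3,4,6\}$. You instead work throughout with the stabiliser orders $|C_v|\in\{1,2,3,4,6\}$ (correctly excluding $5$ via the Djokovi\'c--Miller list and the $2$-group structure of stabilisers in the non-arc-transitive case), derive $\gcd_v|C_v|=1$ from faithfulness, and run a connectivity argument on the ratio graph of labels, whose only connected subsets of $\{2,3,4,6\}$ with gcd $1$ are $\{2,3,6\}$ and $\{2,3,4,6\}$; this isolates the single dangerous configuration, which forces a label $6$. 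Your local analysis at such a vertex --- $G_v\in\{S_3\times S_2,\,S_4\times S_2\}$, the kernel of $G_v\to G_v^{\Gamma(v)}$ is a $2$-group, so the order-$6$ generator of $C_v$ maps to an element of order $3$ in $S_3$ and $g^{o(g)/2}$ fixes $\Gamma(v)$ pointwise --- is sound and is the genuinely new ingredient: it shows that label-$6$ vertices have only even-labelled neighbours, so the (nonempty) set of label-$3$ vertices is a proper union of connected components, a contradiction. What the paper's approach buys in addition is the finer structural fact that an orbit of size $\ell(g)/3i$ is a leaf of the quotient, which is reused in Lemma~\ref{lem:pairs}; what yours buys is that it avoids the path bookkeeping and makes the dependence on the vertex-stabiliser classification (Theorem~\ref{prop:djoko}) fully explicit. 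Both proofs ultimately rest on Lemma~\ref{lem:orbitratio}.
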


\begin{proof}
Let $g \in \Aut(\Gamma)$, $G = \langle g \rangle$ and let $u \in \V(\Gamma)$ be such that $\ell(g)=|u^G| \geq |v^G|$ for all $v \in \V(\Gamma)$. We will show that the size of any $G$-orbit divides $|u^G|$. Let $v \in \V(\Gamma) \setminus \{u^G\}$. 

Consider the quotient graph $\Gamma/G$. Since $\Gamma$ is connected so is $\Gamma/G$ and thus there exists a $u^Gv^G$-path 
$W = u_0^G\,u_1^G\, \ldots\, u_n^G$ where $u_0 = u$, $u_n = v$ and each $u_i$ is adjacent to $u_{i+1}$ in $\Gamma$.

Suppose that for some $i,j \in \{0,\ldots,n\}$ and $m \in \ZZ$, we have $|i-j|=1$ and $|u_i^G| =  3|u_j^G| = 3m$. Then by Corollary \ref{cor:13}, $u_i^G$ is the only neighbour of $u_j^G$ in $\Gamma/G$. It follows that $j = 0$ (and thus $i=1$) or $j = n$ (and thus $i=n-1$). However, since $|u_0^G| \ge |u_1^G|$ (by our assumption on $u$), we see that $j\not = 0$ and thus $j=n$. 
This together with Lemma \ref{lem:orbitratio} implies that for all $i \in \{0,\ldots,n-2\}$ we have $|u_i| = k |u_{i+1}|$ for some $k \in \{\frac{1}{2},1,2\}$. Then $|u_{n-1}^G| = \frac{1}{2^r} |u_0^G|$ for some integer $r$, $0\leq r \leq n-1$, and
thus 
$|v^G|= |u_n^G|=\frac{1}{3\cdot 2^r}|u_0^G| = \frac{1}{3\cdot 2^r}|u^G|.$


Now, if no two orbits $u_i^G$ and $u_j^G$ satisfy $|u_i^G| = 3|u_j^G|$, then by Lemma \ref{lem:orbitratio}, we have $|u_i| = k_i |u_{i+1}|$ with $k_i \in \{\frac{1}{2},1,2\}$
 for all $i \in \{0,\ldots,n-1\}$, and thus  $|u^G|=\frac{1}{2^r}|v^G|$ for some $r \in \{0,\ldots,n\}$. 
 
Let $m=|v^G|$. We have shown above
 that $o(g) = \ell(g) = |u^G| = km$ where $k$ is an integer
not divisible by any prime larger than $3$. In particular, $u^G$ is a regular orbit of $g$.
Observe that $g^m \in G_v$ and that $|u^{g^m}| =k$.
In particular, $g^m$ is an element fixing a vertex of order at least $k$. But then 
Theorem~\ref{prop:djoko}, which was proved in Section~\ref{sec:Gv}, implies that $k\le 6$
and thus $k\in \{1,2,3,4,6\}$.
\end{proof}

\begin{corollary}
\label{cor:d=3}
Theorem~\ref{the:34} holds in the case $d=3$. That is,
 $\meo(\Gamma) \le n$ for every cubic vertex-transitive graphs $\Gamma$ with $n$ vertices.
\end{corollary}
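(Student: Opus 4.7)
The plan is to split the argument into two cases according to whether $\Gamma$ is isomorphic to $K_{3,3}$ or not. The generic case requires essentially no new work: by Theorem~\ref{the:1reg}, every $g \in \Aut(\Gamma)$ admits a regular orbit, i.e.\ satisfies $o(g) = \ell(g)$. Since $\ell(g)$ is by definition the length of some orbit of $\langle g \rangle$ on $\V(\Gamma)$, we immediately have $o(g) = \ell(g) \le n$. Maximising over all $g \in \Aut(\Gamma)$ then yields $\meo(\Gamma) \le n$, which is exactly the bound claimed in Theorem~\ref{the:34} for $d = 3$.

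The remaining case $\Gamma \cong K_{3,3}$ has $n = 6$, and here Theorem~\ref{the:1reg} does not apply; nevertheless the bound is straightforward to verify directly. Indeed, $\Aut(\Gamma)$ embeds into $\Sym(\V(\Gamma)) = \Sym(6)$, so $o(g)$ is bounded above by the maximal order of an element of $\Sym(6)$. A quick inspection of the possible cycle types on six points (or an appeal to Landau's function at $6$) shows this maximum equals $6 = n$, so the desired inequality holds in this case as well.

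I do not anticipate any real obstacle: all of the substantive work has already been done in Theorem~\ref{the:1reg} (and, implicitly, in Theorem~\ref{prop:djoko}, on which it relies), so the corollary is essentially an immediate repackaging of the statement $o(g) = \ell(g) \le n$, supplemented by an elementary check for the single exceptional graph $K_{3,3}$.
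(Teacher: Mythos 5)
Your proposal is correct and follows essentially the same route as the paper: the non-$K_{3,3}$ case is immediate from the regular orbit guaranteed by Theorem~\ref{the:1reg}, and the exceptional case $K_{3,3}$ is handled by the direct observation that $\meo(K_{3,3}) = 6 = n$ (your bound via the maximal element order in $\Sym(6)$ is a harmless variant of the same check).
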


\begin{proof}
Observe that in the case $\Gamma\not\cong K_{3,3}$  this follows directly from the existence of a regular orbit guaranteed by
Theorem~\ref{the:1reg}.
 On the other hand, if $\Gamma\cong K_{3,3}$, then $\meo(K_{3,3}) =6 = n$.
\end{proof}



\begin{remark}
Observe that for every $k \in \{1,2,3,4,6\}$, there exists a graph $\Gamma$ and an automorphism $g \in \Aut(\Gamma)$ such that $g$ has an orbit of size $\ell(g)/k$. The Pappus graph admits an automorphism $g$ with orbits of sizes $6$, $3$, $2$ and $1$ while the Heawood graph admits an automorphism  with orbits of size $4$, $2$ and $1$ (see Figure \ref{fig:examples}).
\end{remark}

\begin{figure}[h!]
\centering
\includegraphics[width=0.7\textwidth]{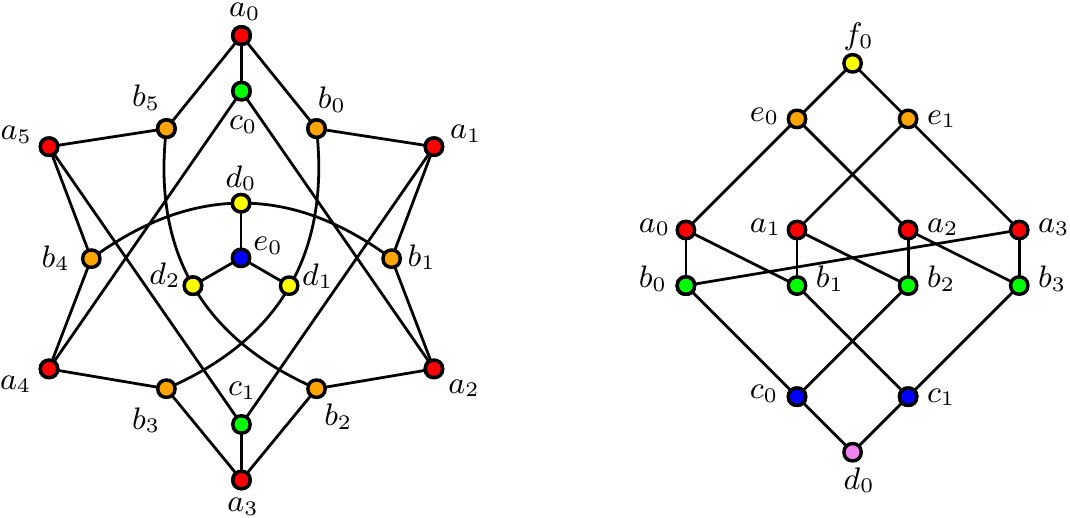}
\caption{The Pappus graph (left) and the Heawood graph (right). For each graph, vertices of the same colour belong to the same orbit under the action of the automorphism $\varphi$ that acts by adding $1$ to the sub-index of each vertex.}
\label{fig:examples}
\end{figure}

\begin{lemma}
\label{lem:squares}
Let $\Gamma$ be a cubic vertex-transitive graph, let $G \leq \Aut(\Gamma)$ be a cyclic group, and let $u,v,w \in \V(\Gamma)$ such that $u \sim v$ and $u \sim w$. If $|u^G|= i|v^G| = i|w^G|$ for some $i \in \{2,3\}$ then the girth of $\Gamma$ is at most $4$ and if $i = 3$, then $\Gamma$ is isomorphic to $K_4$, $K_{3,3}$ or the cube graph $Q$. 
\end{lemma}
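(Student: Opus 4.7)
The plan is to fix a generator $g$ of $G$ and set $m=|v^G|=|w^G|$, so that $|u^G|=im$. Since $g^m$ fixes $v$ (and likewise $w$), applying $g^{jm}$ to the edges $u\sim v$ and $u\sim w$ shows that each of the $i$ distinct vertices $u,u^{g^m},\dots,u^{g^{(i-1)m}}$ of $u^G$ is a common neighbour of $v$ and $w$. Note that $v,w\notin u^G$ (since $|v^G|=|w^G|=m\ne im=|u^G|$) and that $v\ne w$, so these $i$ common neighbours together with $v,w$ form a set of $i+2$ pairwise distinct vertices.

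When $i=2$, the closed walk $u \to v \to u^{g^m}\to w\to u$ uses four distinct vertices and is therefore a $4$-cycle, so the girth of $\Gamma$ is at most $4$ and the lemma holds in this case.

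When $i=3$, the three common neighbours $u,u^{g^m},u^{g^{2m}}$ of $v$ and $w$ realise a $K_{2,3}$ subgraph in $\Gamma$, giving girth at most $4$. To identify $\Gamma$ itself, I would observe that $u$ and $v$ are adjacent, lie in distinct $G$-orbits, and satisfy $|u^G|=3|v^G|$. Hence Lemma~\ref{lem:no23} (with the parameter called $i$ there equal to $1$) forces $\Gamma$ to be arc-transitive. Lemma~\ref{lem:smallgirthAT} then implies that $\Gamma$ is isomorphic to one of $K_4$, $K_{3,3}$, $Q_3$, the Petersen graph, or the dodecahedron; since the last two have girth $5$, the girth bound rules them out and we are left with $\Gamma\in\{K_4,K_{3,3},Q_3\}$.

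The case $i=2$ is essentially free once the common-neighbour observation is made, so the only real work is in $i=3$, where I not only need a girth bound but must also identify $\Gamma$ from a short list. The main obstacle will be recognising that the orbit arithmetic $|u^G|=3|v^G|$ with $u\sim v$ in distinct orbits is exactly the hypothesis of Lemma~\ref{lem:no23}, and then linking arc-transitivity to Lemma~\ref{lem:smallgirthAT}; once that is in place the exclusions by girth are immediate.
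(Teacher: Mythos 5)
Your proof is correct and follows essentially the same route as the paper: both exhibit the $4$-cycle $(u,v,u^{g^m},w)$ using that $g^m$ fixes $v$ and $w$ but moves $u$, and for $i=3$ both invoke Lemma~\ref{lem:no23} to get arc-transitivity and then Lemma~\ref{lem:smallgirthAT} for the classification. Your explicit elimination of the Petersen graph and the dodecahedron via the girth bound is a detail the paper leaves implicit.
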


\begin{proof}
Let $g$ be a generator of $G$ and set $ m = |v^G| =|w^G|$. Then $u^{g^m} \neq u$, but $v^{g^{m}} = v$ and $w^{g^{m}} = w$. It follows that $u^{g^m} \sim v$ and $u^{g^m} \sim w$ and thus $(u,v,u^{g^m},w)$ is a $4$-cycle. Moreover, if $i=3$, then by Lemma \ref{lem:no23}, $\Gamma$ is arc-transitive and by Lemma \ref{lem:smallgirthAT}, $\Gamma$ is isomorphic to $K_4$, $K_{3,3}$ or $Q$.
\end{proof}

The following remark, which will be used in the proof of Theorem~\ref{the:mainorbits}, is a consequence of Corollary \ref{cor:13}, Lemma \ref{lem:orbitratio}, and Theorem \ref{the:1reg}. 

\begin{lemma}
\label{lem:pairs}
Suppose $\Gamma$ is a cubic vertex-transitive graph other than $K_4$, $K_{3,3}$ or the cube $Q_3$, let $g \in \Aut(\Gamma)$. Then the following hold.
\begin{enumerate}
\item for $i \in \{1,2\}$, an orbit of size $\ell(g)/3i$ is adjacent to only one orbit, which has size $\ell(g)/i$;
\item for $i \in \{1,2\}$ an orbit of size $\ell(g)/i$ is adjacent to at most one orbit of size $\ell(g)/3i$;
\item if $v^G$ has size $\ell(g)/4$ then there exists an orbit $w^G$ of size $\ell(g)/2$ (not necessarily adjacent to $v^G$).
\end{enumerate}
\end{lemma}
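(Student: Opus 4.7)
The plan is to exploit the rigid structure imposed on the quotient graph $\Gamma/G$ (where $G=\langle g \rangle$) by the previously established results. Specifically, Theorem~\ref{the:1reg} restricts every orbit size to the set $\{\ell(g)/k:k\in\{1,2,3,4,6\}\}$; Lemma~\ref{lem:orbitratio} forces any two adjacent orbits to have sizes in ratio $1$, $2$, or $3$; Corollary~\ref{cor:13} says that if two adjacent orbits differ by a factor of exactly $3$, then the smaller one is a leaf in $\Gamma/G$; and Lemma~\ref{lem:squares} forbids two distinct neighbours of a single vertex from lying in two distinct orbits whose size is one third that of the vertex's own orbit (unless $\Gamma\in\{K_4,K_{3,3},Q_3\}$, which is excluded by hypothesis).

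For part (1), assume $v^G$ has size $\ell(g)/(3i)$ with $i\in\{1,2\}$ and fix an orbit $u^G$ of size $\ell(g)$. I will consider a path $u^G=v_0^G,v_1^G,\ldots,v_n^G=v^G$ in $\Gamma/G$, which exists by connectedness. Mimicking the argument of Theorem~\ref{the:1reg}, any factor-$3$ drop along the path must occur at its final edge, since Corollary~\ref{cor:13} forces the smaller endpoint of such an edge to be a leaf of $\Gamma/G$, and a leaf can only appear as an endpoint of a simple path (and it cannot be $u^G$ since that orbit is maximum). Since the total size ratio along the path is $1/(3i)$, which is not a power of $2$, there must be at least one factor-$3$ drop, and by the previous observation exactly one, located at the final edge. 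This pins down $|v_{n-1}^G|=\ell(g)/i$. Corollary~\ref{cor:13} then immediately yields that $v_{n-1}^G$ is the unique neighbour of $v^G$ in $\Gamma/G$.

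For part (2), assume for contradiction that an orbit $u^G$ of size $\ell(g)/i$ has two distinct neighbours $v_1^G$ and $v_2^G$ of size $\ell(g)/(3i)$. Since $G$ acts transitively on $u^G$, every vertex in $u^G$ has at least one neighbour in each of $v_1^G$ and $v_2^G$. Picking $u\in u^G$ together with $v_1\in v_1^G$ and $v_2\in v_2^G$ adjacent to $u$, we have $|u^G|=3|v_1^G|=3|v_2^G|$ and $v_1\neq v_2$ (since they lie in distinct orbits). Lemma~\ref{lem:squares} then forces $\Gamma\in\{K_4,K_{3,3},Q_3\}$, contradicting the hypothesis.

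For part (3), observe first that an orbit of size $\ell(g)/4$ can have no neighbour whose size differs from it by a factor of $3$, because neither $3\ell(g)/4$ nor $\ell(g)/12$ lies in the allowed set $\{\ell(g)/k:k\in\{1,2,3,4,6\}\}$. Combining this with the ``factor-$3$ edges are terminal'' principle used in part (1) shows that on any path $u^G=v_0^G,\ldots,v_n^G=v^G$ from a maximum orbit $u^G$ to $v^G$ in $\Gamma/G$, every edge changes the orbit size by a factor in $\{1/2,1,2\}$. Writing $|v_j^G|=\ell(g)/2^{k_j}$, we then have $k_0=0$, $k_n=2$, and $|k_{j+1}-k_j|\leq 1$, so by an integer intermediate-value argument some $k_j$ equals $1$, producing an orbit of size $\ell(g)/2$ as required. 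The trickiest step is part (1), as it requires simultaneously tracking the multiplicative structure of the path (forcing the factor-$3$ drop to exist and to occur only at the end) and the leaf structure in $\Gamma/G$ (yielding uniqueness of the neighbour).
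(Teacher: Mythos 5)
Your proposal is correct and follows essentially the same route as the paper's proof: a path in $\Gamma/G$ from a maximum orbit, Lemma~\ref{lem:orbitratio} to constrain adjacent ratios, Corollary~\ref{cor:13} to force the smaller end of any factor-$3$ edge to be a leaf (hence terminal), Lemma~\ref{lem:squares} for part (2), and the admissible sizes from Theorem~\ref{the:1reg} for part (3). The only cosmetic difference is in part (3), where the paper inspects the first index at which the orbit size changes from $\ell(g)/4$ rather than running your intermediate-value argument on exponents of $2$; both are equivalent given that factor-$3$ edges are excluded along the path.
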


\begin{proof}
Let $u \in \V(\Gamma)$ be such that $|u^G| = \ell(g)$ and let $v^G$ be any other orbit. Since $\Gamma$ is connected there exists a $v^Gu^G$-path $W = v_0^G\,v_1^G\, \ldots\, v_n^G$ in $\Gamma/G$ where $v_0 = v$, $v_n = u$ and each $v_i^G$ is adjacent to $v_{i+1}^G$. 

To show that (1) holds, suppose $|v_0^G| = \ell(g)/3i$. By Lemma \ref{lem:orbitratio}, if $j \in \{0,\ldots,n\}$, then $|v_{j+1}^G| \geq |v_j^G|$ implies $|v_j^G| = k|v_j^G|$ for some $k \in \{1,2,3\}$. Then, since $|v_n^G| =  3i|v_0^G|$, there must exist $j \in \{0,\ldots,n\}$ such that $|v_{j+1}^G| = 3|v_j^G|$. By Corollary \ref{cor:13}, $v_{j+1}^G$ is the only orbit adjacent to $v_j^G$, which implies that $j =0$. That is,  $v_j^G=v_0^G=v^G$. Thus, $v^G$ is adjacent to only one orbit of size $3|v^G|=3\cdot\ell(g)/3i=\ell(g)/i$. Therefore (1) holds.

Now, to show that (3) holds, suppose $|v_0^G| = \ell(g)/4$. Let $j \in \{1,\ldots,n\}$ be the smallest integer for which $|v_j^G| \neq \ell(g)/4$. If $|v_j^G| < |v_{j-1}^G| = \ell(g)/4$, then by Lemma \ref{lem:orbitratio}, $|v_j^G| = |v_{j-1}^G|/k = \ell(g)/4k$ holds for some $k \in \{2,3\}$, but this contradicts Theorem \ref{the:1reg}. Then $|v_j^G| > |v_{j-1}^G| = \ell(g)/4$, and by Lemma~\ref{lem:orbitratio} we have $|v_j^G| = k\cdot\ell(g)/4$ for some $k \in \{2,3\}$. However, by Theorem \ref{the:1reg}, $k \neq 3$. It follows that $|v_j^G| = 2\ell(g)/4=\ell(g)/2$ and thus (3) holds.

Finally, let us show that (2) holds. Suppose $v^G=\ell(g)/i$ for some $i \in \{1,2\}$. If $v$ has two neighbours $w_1$ and $w_2$ such that $|v^G|=3|w_1^G|=3|w_2^G|$, then by Lemma \ref{lem:squares}, $\Gamma$ is isomorphic to $K_4$, $K_{3,3}$ or $Q_3$.
\end{proof}


\subsection{Proof of Theorem~\ref{the:regularorbits}}
We are now ready to finish the proof of Theorem~\ref{the:regularorbits}. 
For the rest of the section, let $\Gamma$ be a cubic vertex-transitive graph not isomorphic to $K_{3,3}$, let $g\in \Aut(\Gamma)$ and let $G=\langle g \rangle$. We may assume that
$G$ is not transitive on the vertices of $\Gamma$.
In Theorem~\ref{the:1reg} we have already proved that $g$ has at least one regular orbit,
say $u^G$.
Let us now assume in addition that
 $\Gamma$ is not isomorphic to $K_4$, the cube graph $Q_3$,  the Petersen graph, the Pappus graph or the Heawood graph.
 We then need to show that  every regular orbit of $g$ other than $u^G$ is adjacent to  $u^g$ in $\Gamma/G$.

 Since the order of $G$ is smaller than $n$, $u^G$ is not the only orbit of $G$ and since $\Gamma$ is connected, $u^G$ must be adjacent to another orbit in $\Gamma/G$. If any of the neighbouring orbits of $u^G$ has size $|u^G|$, then the claim is proved. We shall thus assume that this is not the case.
 Thus, if $v^G$ is an orbit adjacent to $u^G$, Lemma~\ref{lem:orbitratio} implies that $|u^G|=k|v^G|$ for some $k \in \{2,3\}$ and every vertex of $u^G$ is adjacent to precisely $k$ vertices in $v^G$.
There are three cases (which are divided in a total of 7 subcases) to be considered, depending on the numbers of neighbours of $u^G$ in $\Gamma/G$. 

{\em Case 1: $u^G$ is adjacent to exactly one other orbit $v^G$, where $u \sim v$}. 
 Then $|u^G| = k |v^G|$ for some $k \in \{2,3\}$

Case 1.1: $|u^G| = 2 |v^G|$. Since $u$ has only one neighbour in $v^G$ and no neighbours in the $G$-orbits apart from $v^G$ and $u^G$, it follows that it has two neighbours in $u^G$, which must be of the form $u^{g^i}$ and $u^{g^{-i}}$ for some $i \in \{1,\ldots,|u^G|-1|\}$.
Let $m = |v^G|$, and observe that $v \sim u$ and $v \sim u^{g^m}$, and  the edges $vu$ and $vu^{g^m}$ belong to the same $G$-orbit. Since $\Gamma$ is vertex-transitive, there exists $h \in \Aut(\Gamma)$ mapping $v$ to $u$. Clearly, one edge in $\{vu,vu^{g^m}\}$ is mapped by $h$ to an edge in $\{u^{g^i},u^{g^{-i}}\}$. It follows that the edges $uu^{g^i}$, $uu^{g^{-i}}$, $vu$ and $v^{gm}$ belong to the same $\Aut(\Gamma)$-orbit. That is, $\Gamma$ is edge-transitive, and since it is cubic and vertex-transitive, it must be arc-transitive. If the girth of $\Gamma$ is smaller than $6$, then \ref{lem:smallgirthAT} implies that $\Gamma$ is isomorphic to $K_4$, $K_{3,3}$, $Q_3$, the Petersen graph or the dodecahedron graph $\rm{GP}(10,2)$. However, among these five graphs only the dodecahedron graph has the property that every automorphism admits two adjacent regular orbits. Thus $\Gamma$ is isomorphic  to $K_4$, $K_{3,3}$, $Q_3$, the Petersen graph.
Otherwise, observe that for $\alpha \in \{-1,1\}$, $C_{\alpha} = (v,u,u^{g^{\alpha i}},v^{g^{\alpha i}},u^{g^{\alpha i+m}},u^{g^m})$ is a cycle and thus $\Gamma$ has girth $6$. Moreover, $vu$ and $vu^m$ each lie on both $C_0$ and $C_1$ (see Figure \ref{fig:cycles}). Therefore, if we let $w$ be the third neighbour of $v$ (that does not belong to $u^G$), then the edge $vw$ must lie on one $6$-cycle $C \neq C_{\alpha}$. Clearly $C$ must visit either $vu$ or $vu^m$, which implies that one edge incident to $u$ (and therefore every edge of $\Gamma$ by arc-transitivity) lies on at least three $6$-cycles. It then follows from \cite[Lemma 4.2]{ATgirth} that $\Gamma$ is isomorphic to Pappus Graph, the Heawood graph, the M\"{o}bius-Kantor graph $\rm{GP}(8,3)$ or the Desargues graph $\rm{GP}(10,3)$. I can be verified that every automorphism of the graphs $\rm{GP}(8,3)$ or $\rm{GP}(10,3)$ admit two adjacent regular orbits. 

Case 1.2: $|u^G| = 3 |v^G|$. Observe that $v$ has three distinct neighbours in $u^G$, which implies that $u^G$ and $v^G$ are the only $G$-orbits of $\Gamma$. As in the previous case, $u$ is adjacent to $u^{g^i}$ and $u^{g^{-i}}$ for some $i \in \{1,\ldots,|u^G|-1|\}$. Furthermore $v$ is adjacent to $u$, $u^{g^m}$ and $u^{g^{2m}}$ where $m = |v^G|$. If $m = 1$ or $m=2$, then $\Gamma$ is isomorphic to $K_4$ or $Q_3$ respectively. If $m \geq 3$, then the edge $uv$ lies on $4$ distinct $6$-cycles (see Figure \ref{fig:cycles}), and by \cite[Lemma 4.2]{ATgirth}, $\Gamma$ is isomorphic to Heawood graph or the Pappus graph.


{\em Case 2: $u^G$ has exactly two neighbouring orbits $v^G$ and $w^G$, with $u \sim v$ and $u \sim w$}. We have three subcases.

Case 2.1: $|u^G|=3|v^G|=3|w^G|$. By Lemma \ref{lem:squares}, $\Gamma$ is isomorphic to either $K_4$, $K_{3,3}$, or $Q$.

Case 2.2: $|u^G|=2|v^G|=2|w^G|$. By Lemma \ref{lem:orbitratio}, $u$ has exactly one neighbour in each $v^G$ and $w^G$. Since $\Gamma$ is cubic, the third neighbour of $u$ must belong to $u^G$. Indeed, $u \sim u^{g^m}$ where $m = |v^G| = \frac{1}{2}|u^G|$. Then $(u,u^{g^m},v)$ and $(u,u^{g^m},w)$ are $3$-cycles in $\Gamma$, and the edge $uu^{g^m}$ lies on both of them. Then, by Lemma \ref{lem:3cycles}, $\Gamma \cong K_{3,3}$.

Case 2.3: $|u^G|=2|v^G|=3|w^G|$. Observe that $u \sim u^{g^m}$  where $m = |v^G| = \frac{1}{2}|u^G|$. Then $(u,u^{g^m},v)$ is a $3$-cycle, and since $|u^G|=3|w^G|$, $\Gamma$ is arc-transitive by Lemma \ref{lem:no23}. Then by Lemma \ref{lem:smallgirthAT}, $\Gamma \cong K_4$.

Case 3: $u^G$ has three distinct neighbouring orbits, $v^G$, $w^G$, $x^G$ where $u$ is adjacent to $v$, $w$ and $x$. 

Case 3.1: One of these orbits has size $1/3|u^G|$. Then by Lemma \ref{lem:no23}, $\Gamma$ is arc-transitive. Furthermore, by the pigeon hole principle, two of these orbits must have size $1/k|u^G|$ for some $k \in \{2,3\}$. It follows that $\Gamma$ has a $4$-cycle and by Lemma \ref{lem:smallgirthAT}, $\Gamma$ is isomorphic to $K_4$, $K_{3,3}$ or $Q$.

Case 3.2: $|u^G|=2|v^G|=2|w^G|=2|x^G|$. Observe that every edge incident to $u$ lies on two distinct $4$-cycles. If the girth of $\Gamma$ is $4$, then by Lemma \ref{lem:4cycles}, $\Gamma$ is isomorphic to $K_{3,3}$ or the cube graph $Q$. If the girth of $\Gamma$ is $3$, then $u$ must lie on a $3$-cycle. That is, two of the neighbours of $u$ must be adjacent. Without loss of generality, assume $v \sim w$. Then both $(u,v,w)$ and $(u^{g^m},v,w)$ are $3$-cycles. In particular, all three edges incident to $v$ lie on a $3$-cycle. Since $\Gamma$ is vertex-transitive, then every edge incident to $u$ must lie on a $3$-cycle. In particular, this implies that $u$ and $x$ have a common neighbour. This leads us to a contradiction, since neither $v$ nor $w$ is adjacent to $x$ (the neighbourhoods of $v$ and $w$ are $\{u,u^{g^m},w\}$ and $\{u,u^{g^m},v\}$, respectively).

This finishes the proof of Theorem~\ref{the:regularorbits}.

\begin{figure}
\includegraphics[width=0.8\textwidth]{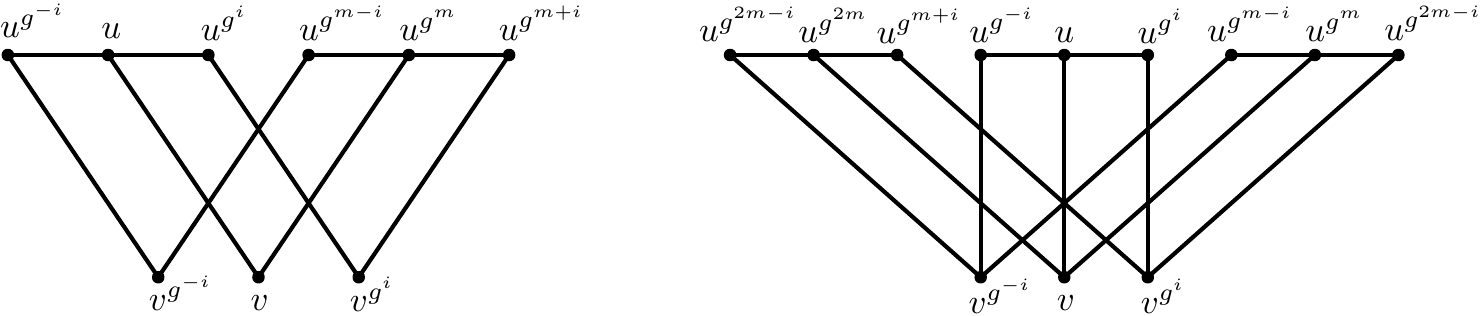}
\caption{Cycles of length $6$ through $u$ according to Cases $1.1$ and $1.2$.}
\label{fig:cycles}
\end{figure}

\section{Bounding the number of orbits and a proof of Theorem~\ref{the:mainorbits}}
\label{sec:bound}

The aim of this section is to prove Theorem~\ref{the:mainorbits}.
That is, we want to show that
\begin{equation}
\label{eq:mug}
\mu(g) \le \frac{17n}{6\, o(g)}
\end{equation}
holds for every automorphism $g$ of a cubic vertex-transitive graph with $n$ vertices.
The second claim then follows by applying this inequality to an automorphism $g$ of largest order. Note that in view of Theorem~\ref{the:1reg}, it
suffices to prove the inequality with
the parameter $o(g)$ substituted by $\ell(g)$ (observe that the inequality (\ref{eq:mug})
clearly holds for $\Gamma = K_{3,3}$
and an automorphism $g$ of $K_{3,3}$ without a regular orbit).

We begin by proving the inequality (\ref{eq:mug}) for the case where $\Gamma$ belongs
to the family of {\em Split Praeger-Xu graphs}, which we now introduce.

For an integer $r\ge 3$, let $\dPX(r,1)$ denote the directed graph with the vertex-set $\ZZ_r \times \ZZ_2$ and with a directed edge pointing from $(x,i)$ to $(x+1,j)$ for every $x\in \ZZ_r$ and $i,j\in \ZZ_2$. For an integer $s$, $2\le s\le r-1$, we let $\dPX(r,s)$ be the directed graph
whose vertex-set is the set of all directed paths of length $s$ in $\dPX(r,1)$ and with
a directed edge pointing from an $s$-path $(u_0, u_1, \ldots, u_s)$ to the successor $s$-paths $( u_1, \ldots, u_s,u_{s+1})$ and $( u_1, \ldots, u_s,v_{s+1})$, where $u_{s+1}$ and $v_{s+1}$ are the two out-neighbours of $u_s$ in $\dPX(r,1)$.
We should point out that the directed graphs $\dPX(r,s)$ were first introduced in \cite{PraDir} in a slightly different way and were denoted $C_2(r,s)$. Several equivalent descriptions of the directed graphs $\dPX(r,s)$ and their undirected counterparts were discussed in \cite{JajPotWilPX}. 

 The {\em Split Praeger-Xu graph} $\SPX(r,s)$  is the graph obtained from $\dPX(r,s)$ by splitting each vertex
$u$ of $\dPX(r,s)$ into two vertices, denoted $u_-$ and $u_+$, and by connecting
each $u_-$ with $u_+$ for every vertex $u$ of $\dPX(r,s)$, and every $v_+$ to $u_-$ for every directed edge $(v,u)$ of $\dPX(r,s)$. The splitting operation was introduced in \cite{PotVerDigraphs}, where the graphs $\SPX(r,s)$ appeared under the name ${\rm Pl}^{s-1}({\rm \vec{W}}(r,2))$.

Observe that the automorphism group of $\dPX(r,1)$ is isomorphic to the semidirect product
$\C_2^r \rtimes \C_r$ with the elementary abelian group $\C_2^r$ being generated by automorphisms $\tau_i$, $i\in \ZZ_r$, interchanging the vertices $(i,0)$ and $(i,1)$ while 
fixing all other vertices. Moreover, $\Aut(\dPX(r,1))$ acts in an obvious way as a vertex-transitive group of automorphisms on $\dPX(r,s)$ for every $s$, $1\le s \le r-1$, as well as on $\SPX(r,s)$. In fact, one can easily see that $\Aut(\dPX(r,s)) \cong \Aut(\dPX(r,1))$ and $|\Aut(\dPX(r,s)): \Aut(\SPX(r,s))| = 2$. In this correspondence, an automorphism fixing a vertex of $\dPX(r,s)$ corresponds to an automorphism of $\dPX(r,1)$ fixing a directed $s$-path of $\dPX(r,1)$ and thus belongs to the group $\C_2^r$. Similarly,  an element $g\in \Aut(\SPX(r,s))$ fixing a vertex $v_+$ of $\SPX(r,s))$ corresponds to an automorphism of $\dPX(r,s))$ which fixes a vertex of $v$. In particular,
the exponent of the vertex-stabiliser in $\Aut(\SPX(r,s))$ is $2$, and so
$
\mso(\SPX(r,s)) = 2.
$
We can now apply the inequality (\ref{eq:mu}) in Lemma~\ref{lem:mu} to conclude that
$$
 \mu(g) \leq 2 (\frac{n}{\ell(g)} - 1) + 1 
  < \frac{17n}{6\, o(g)}.
$$
We have thus proved the following.

\begin{lemma}
\label{lem:SPX}
If $\Gamma \cong \SPX(r,s)$ with $r\ge 3$ and $1\le s\le r-1$, then
the inequality (\ref{eq:mug}) holds for every $g\in \Aut(\Gamma)$.
In particular, Theorem \ref{the:mainorbits} holds for the Split Praeger-Xu graphs.
\end{lemma}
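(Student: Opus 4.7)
The plan is to combine the structural analysis of $\Aut(\SPX(r,s))$ outlined in the paragraphs preceding the lemma with Lemma~\ref{lem:mu} and Corollary~\ref{cor:TW}. The key input is the claim, already justified above, that every element of $\Aut(\SPX(r,s))$ fixing a vertex corresponds, via the splitting operation and the isomorphism $\Aut(\dPX(r,s)) \cong \Aut(\dPX(r,1)) \cong C_2^r \rtimes C_r$, to an element of the elementary abelian subgroup $C_2^r$. Consequently every vertex-stabiliser has exponent $2$, so $\mso(\SPX(r,s)) = 2$.

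Given this, I would apply Corollary~\ref{cor:TW} with $p = 2$ and $k = 1$ (since $\exp(G_\omega) = 2 = 1 \cdot 2^1$), which yields $o(g) \le \ell(g)$. Together with the trivial inequality $\ell(g) \le o(g)$, this gives $o(g) = \ell(g)$; in other words, every automorphism of $\SPX(r,s)$ already has a regular orbit. This step is important because the bound from Lemma~\ref{lem:mu} is phrased in terms of $\ell(g)$, whereas the target bound~\eqref{eq:mug} is in terms of $o(g)$; having $\ell(g) = o(g)$ lets us interchange them freely.

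With these two facts in hand, substituting $\mso = 2$ into Lemma~\ref{lem:mu} gives
$$
\mu(g) \le 2\Bigl(\frac{n}{\ell(g)} - 1\Bigr) + 1 = \frac{2n}{\ell(g)} - 1 = \frac{2n}{o(g)} - 1,
$$
and since $\tfrac{2n}{o(g)} - 1 < \tfrac{17n}{6\,o(g)}$ is equivalent to the trivially true inequality $\tfrac{5n}{6\,o(g)} > -1$, the bound~\eqref{eq:mug} follows.

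The main obstacle is the first, structural step: rigorously verifying that the stabiliser of a vertex in $\Aut(\SPX(r,s))$ is contained in $C_2^r$. Once this is accepted --- either from the sketch in the preamble, or by citing the explicit descriptions of $\Aut(\dPX(r,s))$ and of the splitting operation in \cite{JajPotWilPX,PraDir,PotVerDigraphs} --- the remainder is a two-line computation combining Lemma~\ref{lem:mu} and Corollary~\ref{cor:TW}, and the final claim of the lemma (that Theorem~\ref{the:mainorbits} holds on this family) is then immediate.
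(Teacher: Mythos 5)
Your proof is correct and follows essentially the same route as the paper: both establish $\mso(\SPX(r,s))=2$ from the structural description of $\Aut(\dPX(r,1))\cong \C_2^r\rtimes \C_r$ and then feed this into Lemma~\ref{lem:mu}. The only (welcome) difference is that you make explicit, via Corollary~\ref{cor:TW}, why $\ell(g)=o(g)$ so that the bound can be restated in terms of $o(g)$, a point the paper handles by its earlier blanket reduction using Theorem~\ref{the:1reg}.
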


This lemma, together with
 a recent result of Pablo Spiga and the first-named author of this paper \cite{fixicity},
 which bounds the number of vertices that can be fixed by a non-trivial automorphism in a cubic-vertex transitive graph, yields the following.
 
 \begin{corollary}
 \label{cor:SPX}
 Let $\Gamma$ be a cubic vertex-transitive graph on $n$ vertices
 admitting a non-identity automorphism fixing more than $\frac{n}{3}$ vertices of $\Gamma$.
 Then the inequality (\ref{eq:mug}) holds for every $g\in \Aut(\Gamma)$.
In particular, Theorem \ref{the:mainorbits} holds for such a graph $\Gamma$. 
 \end{corollary}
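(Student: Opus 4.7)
The plan is to reduce the corollary to Lemma \ref{lem:SPX} by identifying $\Gamma$ as a Split Praeger--Xu graph. The main input will be the fixicity bound of \cite{fixicity}, which states (in the form we need) that if $\Gamma$ is a cubic vertex-transitive graph on $n$ vertices and some non-identity automorphism of $\Gamma$ fixes more than $n/3$ vertices, then $\Gamma$ is isomorphic to $\SPX(r,s)$ for some integers $r\ge 3$ and $1\le s \le r-1$. This is exactly the hypothesis placed on $\Gamma$ in the statement of Corollary~\ref{cor:SPX}.

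First I would invoke \cite{fixicity} to conclude that $\Gamma \cong \SPX(r,s)$ for some admissible pair $(r,s)$. Once this identification is made, Lemma~\ref{lem:SPX} immediately yields the inequality
\[
\mu(g) \le \frac{17n}{6\, o(g)}
\]
for every $g\in \Aut(\Gamma)$, which is exactly (\ref{eq:mug}). Since this is precisely the content of Theorem~\ref{the:mainorbits} applied to $\Gamma$ and every $g\in\Aut(\Gamma)$, the ``in particular'' clause follows as well.

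The only potential obstacle is the precise statement of the fixicity result in \cite{fixicity}: one must check that the threshold $n/3$ appearing in the hypothesis matches the threshold beyond which the Split Praeger--Xu graphs are the unique examples. If \cite{fixicity} leaves a small list of sporadic exceptions along with the infinite family $\SPX(r,s)$, those finitely many graphs can be dealt with by direct inspection (noting that $\mu(g)\le \mu(1)=n$ and $o(g)\le \meo(\Gamma)$ give uniform bounds for graphs of bounded order). Apart from this routine bookkeeping, the proof of the corollary is simply the chain ``hypothesis $\Rightarrow$ $\Gamma\cong \SPX(r,s)$ by \cite{fixicity} $\Rightarrow$ (\ref{eq:mug}) by Lemma~\ref{lem:SPX}.''
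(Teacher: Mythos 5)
Your proposal is correct and follows essentially the same route as the paper: the paper likewise applies the fixicity theorem of \cite{fixicity} to identify $\Gamma$ as a Split Praeger--Xu graph and then concludes by Lemma~\ref{lem:SPX}, and the caveat you anticipate is real --- the cited result requires $n>20$, so the paper disposes of the small cases by consulting the census of cubic vertex-transitive graphs, exactly the kind of direct inspection you describe.
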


\begin{proof}
The claim can be easily checked for all vertex-transitive graphs on at most $20$ vertices
by consulting the census of all cubic vertex-transitive graphs of order at most $1280$ \cite{CVT} or by some other ad hoc techniques. We may thus assume that $n>20$.
By \cite[Theorem 1.2]{fixicity}, $\Gamma$ is then isomorphic to a Split Praeger-Xu graph $\SPX(r,s)$ with $r\ge 3$ and $s\le 2r/3$. The result then follows by Lemma~\ref{lem:SPX}.
\end{proof}

Let $\Gamma$ be a cubic vertex-transitive graph with $n>20$ vertices, and let $g \in \Aut(\Gamma)$. In view of Corollary \ref{cor:SPX}, we shall assume for the rest of the section that
every automorphism of $\Gamma$ fixes at most $n/3$ vertices of $\Gamma$.
We also let $G=\langle g \rangle$, $k = \frac{n}{o(g)} = \frac{n}{\ell(g)}$ and $m = o(g)$.
Recall that our aim is to show
that $G$ has at most $17k/6$ orbits on $\V(\Gamma)$.
By Lemma \ref{lem:orbitratio}, every orbit of $G$ has size $\frac{m}{i}$ for some $i \in \{1,2,3,4,6\}$. For $i \in \{1,2,3,4,6\}$, let $\Omega_i \subseteq \V(\Gamma)$ be the set of vertices contained in a $G$-orbit of size $\frac{m}{i}$ and let $N_i$ denote the number of $G$-orbits of size $\frac{m}{i}$. Note that $|\Omega_i| = \frac{m}{i}N_i$. 

By Lemma \ref{lem:pairs}, we have
\begin{align}
N_6 \leq N_2, \label{boundN6}\\ 
N_3 \leq N_1. \label{boundN3} 
\end{align}

Observe that  $N_1m + \frac{m}{3}N_3 = |\Omega_1| + |\Omega_3| \leq n = mk$. From this and inequality (\ref{boundN3}), we obtain

$N_3\frac{4m}{3} \leq N_1m + \frac{m}{3}N_3 \leq mk$
 and so
\begin{align}
\label{eq:N3}
N_3 \leq \frac{3}{4}k.
\end{align}

Moreover, we have $N_1 \leq k - N_3\frac{1}{3}$ and thus $N_1 + N_3 \leq k + \frac{2}{3}N_3$. It follows from (\ref{eq:N3}) that
\begin{align}
N_1 + N_3 \leq \frac{3}{2}k.
\label{eq:boundN1N3}
\end{align}

Clearly if $2 \nmid m$, then $G$ can only have orbits of size $m$ and $\frac{m}{3}$. That is, $\V(\Gamma) = \Omega_1 \cup \Omega_3$ and $\mu(g) = N_1 + N_3$. By inequality (\ref{eq:boundN1N3}) we have $\mu(g) \leq \frac{3}{2}k < \frac{17}{6}k$. Hence, we may assume that $2 \mid m$.

Now, consider the automorphism $g^{(m/2)}$ and observe that it fixes every orbit of size $\frac{m}{2}$ as well as every orbit of size $\frac{m}{4}$. That is, the set $\Omega_2 \cup \Omega_4$ is fixed point-wise by $g^{(m/2)}$. Then, since $g$ is not the identity, from Theorem \ref{the:fix}, we have $|\Omega_2 \cup \Omega_4| \leq \frac{n}{3}$. That is
\begin{equation}
\label{eq:omega24}
N_2\frac{m}{2} + N_4\frac{m}{4} \leq \frac{n}{3} = \frac{mk}{3},
\end{equation}
and thus
\begin{equation}
N_4 \leq \frac{4}{3}k - 2N_2.
\label{eq:boundN4}
\end{equation}

Now, $\mu(g)= N_1 + N_2 + N_3 + N_4 + N_6$, but $N_4 \leq \frac{4}{3}k - 2N_2$ and $N_6 \leq N_2$ by inequalities (\ref{eq:boundN4}) and (\ref{boundN6}). Then 
\begin{align*}
\mu(g) \leq N_1 + N_2 + N_3 + \left(\frac{4}{3}k - 2N_2\right) + N_2 = N_1 + N_3 + \frac{4}{3}k = \frac{17}{6}k,
\end{align*}
where the last equality follows from (\ref{eq:boundN1N3}). This completes the proof of Theorem \ref{the:mainorbits}.

We would like to point out that the the constant $\frac{17}{6}$ can featuring in Theorem \ref{the:mainorbits} could most likely be substituted with the integer $2$, however,
as following two extreme examples show, the ration $2$ is the best that one can hope for.
First, consider the case where $G:=\langle g \rangle$ is transitive (that is, when the graph is a circulant). Then $G$ has a single orbit of length $n:=|\V(\Gamma)|$, $n/o(g) =1$ and thus $\mu(g) = 1 = 2n/o(g)-1$. On the other side of the spectrum,
the split Praeger-Xu graphs $\SPX(n/2,1)$ admit a group of automorphisms of order $2$ having $n-2$ orbits on $\V(\Gamma)$, two of which have size $2$ (while all others have contain a single vertex). Here the number of orbits equals $2n/o(g)-2$. We suspect these are two of the most extreme case and make the following conjecture.     

\begin{conjecture}
\label{conjecture}
Let $\Gamma$ be a cubic vertex-transitive graph of order $n$, and let $g \in \Aut(\Gamma)$. If $k = \frac{n}{\ell(g)}$ then $\mu(g) \leq 2k - 1$.
\end{conjecture}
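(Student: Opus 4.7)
The plan is to strengthen the argument of Theorem~\ref{the:mainorbits} by adding structural constraints that its proof does not exploit. First I would dispatch the two exceptional families: for $\Gamma\cong K_{3,3}$ the conjecture is immediate, and for $\Gamma\cong \SPX(r,s)$ it follows from Lemma~\ref{lem:mu} combined with $\mso(\SPX(r,s))=2$ (established in the discussion preceding Lemma~\ref{lem:SPX}), giving $\mu(g)\le 2(n/\ell(g)-1)+1 = 2k-1$. For every other $\Gamma$, \cite[Theorem 1.2]{fixicity} ensures that every non-identity power of $g$ fixes at most $n/3$ vertices. Theorem~\ref{the:1reg} yields $\ell(g)=o(g)=m$, and every orbit of $\langle g\rangle$ has size $m/i$ for some $i\in\{1,2,3,4,6\}$; let $N_i$ be the number of such orbits. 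Eliminating $N_1$ from $\sum_i N_i/i = k$ converts the target $\mu(g)\le 2k-1$ into the equivalent inequality
\[
 \frac{N_2}{2} + \frac{2N_3}{3} + \frac{3N_4}{4} + \frac{5N_6}{6} \le k-1.
\]

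The constraints I would combine are: (a) $N_3\le N_1$ and $N_6\le N_2$ from Lemma~\ref{lem:pairs}; (b) for each $j\in\{2,3,4,6\}$ with $j\mid m$, the fixicity inequality $\sum_{j\mid i} N_i/i \le k/3$ obtained by applying \cite[Theorem 1.2]{fixicity} to $g^{m/j}$; and (c) connectedness of the quotient $\Gamma/\langle g\rangle$, together with the admissible adjacencies from Lemma~\ref{lem:orbitratio} (an orbit of size $m/i$ can only neighbour orbits of size $m/(ij)$ with $j\in\{1,2,3\}$). Splitting on $d:=\gcd(m,6)$, the cases $d\in\{1,2,3\}$ fall out quickly. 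If $d=1$ only $N_1$ is positive and $\mu=k$. If $d=3$ only $N_1,N_3$ are positive, and $N_3\le N_1$ together with $N_1+N_3/3=k$ forces $\mu\le 3k/2$. If $d=2$ only $N_1,N_2,N_4$ are positive, and connectedness forces $N_2\ge 1$ whenever $N_4\ge 1$ (since $T_4$-orbits can only attach to $T_2$- or $T_4$-orbits), so fixicity of $g^{m/2}$ yields $\mu = k+N_2/2+3N_4/4 \le 2k-N_2 \le 2k-1$.

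The principal case, and the main obstacle, is $d=6$, where all five $N_i$ may be positive simultaneously. A direct linear-programming bound using only (a) and (b) yields at best $\mu(g)\le 7k/3$, which improves on Theorem~\ref{the:mainorbits} but still leaves a gap of order $k/3$ to the conjectured $2k-1$. To close that gap I would invoke two further structural ingredients coming from (c). First, each $T_4$-orbit has at most one $T_2$-neighbour (two would require four edges per vertex) and each $T_2$-orbit has at most three $T_4$-neighbours; together with the observation that every ``pure'' $T_4$-orbit (one with no $T_2$-neighbour) must lie in a $T_4$-component which meets an ``attached'' $T_4$-orbit in order for $\Gamma/\langle g\rangle$ to remain connected, this forces $N_4$ to grow with $N_2$ beyond what fixicity alone provides. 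Second, a handshaking count on the subgraph $H$ of $\Gamma/\langle g\rangle$ obtained by removing all $T_3$- and $T_6$-leaves – which is connected, has maximum degree at most three, and has degree at most two at every $T_1$- or $T_2$-vertex that carries an attached leaf – yields the supplementary inequality $N_3+N_6\le N_1+N_2+N_4+2$.

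The technical heart of the proof would then be to merge these structural inequalities with the fixicity bounds so as to prove the equivalent statement $N_1 \ge N_3/3 + N_4/2 + 2N_6/3 + 1$, covering the remaining small values of $k$ by an exhaustive check in the spirit of the proof of Corollary~\ref{cor:SPX}. The two extremal LP-configurations that would saturate $\mu = 7k/3$ – many $T_4$-orbits with few $T_2$-orbits, and many $T_3$-orbits with few $T_1$-orbits – are precisely the ones ruled out by the two structural ingredients above, which lends the scheme its plausibility; the detailed case analysis needed to convert this heuristic into a rigorous proof is, however, delicate, and is where I expect the main work of the argument to lie.
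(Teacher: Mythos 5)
You should first note that this statement is Conjecture~\ref{conjecture} of the paper: the authors pose it as an open problem immediately after proving the weaker bound $\mu(g)\le \frac{17}{6}k$ (Theorem~\ref{the:mainorbits}), and they offer no proof, so there is nothing in the paper to compare your argument against. Your proposal is, by your own account, a plan rather than a proof: the principal case $\gcd(m,6)=6$ is left open, the rigorous part of your argument (the linear programme built from $N_3\le N_1$, $N_6\le N_2$ and the fixicity bounds) reaching only $\mu(g)\le 7k/3$. Of the two further ingredients you invoke to close the gap, the inequality $N_3+N_6\le N_1+N_2+N_4+2$ is already implied by $N_3\le N_1$ and $N_6\le N_2$ and therefore cannot tighten the LP at all, while the claim that ``$N_4$ must grow with $N_2$'' is never made quantitative or proved.

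More seriously, the conjecture as literally stated is false, so no completion of your plan can succeed without first amending the statement. Take $\Gamma=K_4$ and $g$ a $3$-cycle: then $\ell(g)=3$, $k=4/3$ and $\mu(g)=2$, but $2k-1=5/3<2$. Likewise for $\Gamma=Q_3$ and $g$ the composition of a coordinate rotation with the antipodal map, whose orbits have lengths $6$ and $2$: again $k=4/3$ and $\mu(g)=2>5/3$. These examples slip through your case analysis exactly where it is vaguest. In the case $d=3$ you assert that $N_3\le N_1$ together with $N_1+N_3/3=k$ forces $\mu\le 3k/2$, which is true, but $3k/2\le 2k-1$ holds only when $k\ge 2$, and $K_4$ sits at $k=4/3$ with $\mu=3k/2=2$; the $Q_3$ example defeats the equivalent target $N_1\ge N_3/3+N_4/2+2N_6/3+1$ in your principal case, since there $N_1=N_3=1$. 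Any viable version of the conjecture must therefore either assume $k\ge2$ (equivalently $\ell(g)\le n/2$), round the bound up to an integer, or exclude $K_4$ and $Q_3$ explicitly; and even then a proof would still need genuinely new structural input in the case where orbits of all five admissible lengths coexist, which is precisely where your sketch stops.
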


Now, let us get our attention to Theorem \ref{the:regorbitsratio}. Again, it can be easily verified in the census of cubic vertex-transitive graphs that the theorem holds for all cubic vertex-transitive graphs of order smaller than $20$, with the exception of $K_{3,3}$ and the split Praeger-Xu graphs. Moreover, by assumption, every non-trivial automorphism of $\Gamma$ fixes at most $n/3$ vertices, and thus $\Gamma$ is not a split Praeger-Xu graph. Then, to prove Theorem \ref{the:regorbitsratio}, it suffices to show that at least $\frac{5}{12}n$ vertices of $\Gamma$ lie on a regular orbit of $g$. That is, we need to show that $|\Omega_1| \geq \frac{5}{12}n$. Recall that $\Omega_6 = \frac{m}{6} N_6$ and $N_6 \leq N_2$. Moreover, by inequality( \ref{eq:omega24}), we have $N_2 \leq \frac{2}{3}k$ and thus
\begin{align}
\label{eq:omega6}
|\Omega_6| =  \frac{m}{6}N_6 \leq \frac{m}{6}N_2 \leq \frac{m}{6}\cdot\frac{2}{3}k = \frac{1}{9}mk = \frac{1}{9}n.
\end{align}

Now, $n = |\Omega_1| + |\Omega_2| +|\Omega_3| + |\Omega_4| + |\Omega_6|$ but $|\Omega_2| + |\Omega_4| \leq \frac{1}{3}n$ and $|\Omega_6| \leq \frac{1}{9}n$ by inequalities (\ref{eq:omega24}) and (\ref{eq:omega6}), respectively. Therefore

\begin{equation}
\label{eq:omega1}
|\Omega_1| = n - (|\Omega_2| + |\Omega_4|) - |\Omega_6| - |\Omega_3| \geq n - \frac{1}{3}n - \frac{1}{9}n - |\Omega_3| = \frac{5}{9}n - |\Omega_3|
\end{equation} 
 
Since $N_3 \leq N_1$ we have $\Omega_3 = \frac{m}{3} N_3 \leq \frac{m}{3} N_1 = \frac{1}{3}\Omega_1$. From this and (\ref{eq:omega1}), we have $|\Omega_1| \geq \frac{5}{9}n - |\Omega_3| \geq \frac{5}{9}n - \frac{1}{3} |\Omega_1|$. We conclude that $\Omega_1 \geq \frac{5}{12}$, thus proving Theorem \ref{the:regorbitsratio}.

As with the bound given in Theorem \ref{the:mainorbits}, here too we suspect that the ration $12/5$ appearing in Theorem~\ref{the:regorbitsratio} can be improved. On the other hand,
consider the following family of graphs. For an even integer $r > 0$, let $\Psi(r)$ be the graph with vertex set $\ZZ_r \times \ZZ_3$ and with edges of the form $(i,j)(i-1,j)$, $(i,j)(i+1,j+1)$ and $(i,j)(i+1,j+2)$  for all even $i \in \ZZ_r$. The graph $\Psi(r)$ is a cubic vertex-transitive graph of order $3r$ (for more details see \cite[Section 2.4]{girth6VT} where the graph $\Psi(r)$ is called $\Sigma_r$). Observe that the permutation $g$ that interchanges $(i,0)$ with $(i,1)$ while fixing $(i,2)$ for all $i \in \ZZ_r$ is an automorphism of $\Psi(r)$. The regular orbits of $g$ are the sets of the form $\{(i,0),(i,1)\}$. It follows that two thirds of the total number of vertices of $\Psi(r)$ lie on a regular orbit of $g$. We believe that, excluding the family of split Praeger-Xu graphs, this is an extreme case.

\begin{conjecture}
Let $\Gamma$ be a cubic vertex-transitive graph of order $n$ not isomorphic to $K_{3,3}$ or a split Praeger-Xu graph, and let $g \in \Aut(\Gamma)$. Then at least $\frac{2}{3}n$ vertices lie on a regular orbit of $g$.
\end{conjecture}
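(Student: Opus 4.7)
The plan is to strengthen the argument of Theorem~\ref{the:regorbitsratio}, pushing the ratio of regular-orbit vertices from $\tfrac{5}{12}$ up to $\tfrac{2}{3}$. Set $m=o(g)=\ell(g)$ (using Theorem~\ref{the:1reg}), $k=n/m$, and for $i\in\{1,2,3,4,6\}$ let $\Omega_i$ denote the union of orbits of $\langle g\rangle$ of length $m/i$, with $N_i$ the number of such orbits. After reducing to $n>20$ by a finite check, \cite[Theorem~1.2]{fixicity} gives that every non-identity element of $\Aut(\Gamma)$ fixes at most $n/3$ vertices, so the conjecture is equivalent to $|\Omega_2|+|\Omega_3|+|\Omega_4|+|\Omega_6|\leq\tfrac{n}{3}$.

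The first step is to sharpen the fixity input. For every divisor $d>1$ of $m$, the element $g^{m/d}$ pointwise fixes $\bigcup_{i:\,d\mid i}\Omega_i$, since the stabiliser of a point in an orbit of length $m/i$ is $\langle g^{m/i}\rangle$ and $g^{m/d}$ lies in this subgroup precisely when $d\mid i$. Taking $d=2$ yields $|\Omega_2|+|\Omega_4|+|\Omega_6|\leq\tfrac{n}{3}$, which is a genuine strengthening of the corresponding bound used in the proof of Theorem~\ref{the:regorbitsratio}, where the $\Omega_6$-contribution to the fixed set was overlooked; taking $d=3$ yields $|\Omega_3|+|\Omega_6|\leq\tfrac{n}{3}$. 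Together with the inequalities $N_3\leq N_1$ and $N_6\leq N_2$ from Lemma~\ref{lem:pairs}, these settle the easy cases: if $2\nmid m$, then $\Omega_2\cup\Omega_4\cup\Omega_6=\emptyset$ and $|\Omega_3|\leq|\Omega_1|/3$ forces $|\Omega_1|\geq\tfrac{3}{4}n$; if $3\nmid m$, then $\Omega_3\cup\Omega_6=\emptyset$ and the $g^{m/2}$-fixity bound closes the argument.

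The remaining case $6\mid m$ is the crux, and I expect it to be the main obstacle. The Pappus graph, with an order-$6$ automorphism of orbit profile $(6,6,3,2,1)$, saturates the conjecture: with $m=6$ and $n=18$, one has $|\Omega_2|+|\Omega_4|+|\Omega_6|=4$ and $|\Omega_3|+|\Omega_6|=3$, both strictly below $n/3=6$, yet $|\Omega_2|+|\Omega_3|+|\Omega_4|+|\Omega_6|=6=\tfrac{n}{3}$ exactly. A linear-programming argument using only the two refined fixity bounds together with $|\Omega_3|\leq|\Omega_1|/3$ and $|\Omega_6|\leq|\Omega_2|/3$ recovers only $|\Omega_1|\geq\tfrac{n}{2}$, and since $\gcd(2,3,4,6)=1$ no non-identity power of $g$ fixes all of $\Omega_2\cup\Omega_3\cup\Omega_4\cup\Omega_6$, so genuinely new input is required. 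The most promising route is to exploit the pendant structure from Lemma~\ref{lem:pairs}(1): each $\Omega_3$-orbit is adjacent in $\Gamma/\langle g\rangle$ to a unique $\Omega_1$-orbit and absorbs exactly one of the three cubic edges at each of its parent's vertices, and analogously each $\Omega_6$-orbit is pendant off a unique $\Omega_2$-orbit. A careful edge count in $\Gamma$ should then force $|\Omega_1|$ to accommodate every such parent $\Omega_1$-orbit and deliver the missing inequality; as a fallback, one may pass to the quotient $\Gamma/\langle g^{m/6}\rangle$, which inherits cubic vertex-transitivity under a subgroup of $\Aut(\Gamma)$, and proceed by induction on $|\langle g\rangle|$, handling the finitely many residual graphs via the census \cite{CVT}.
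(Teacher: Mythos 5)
The statement you are trying to prove is presented in the paper only as a conjecture: the paper establishes the weaker ratio $\tfrac{5}{12}$ (Theorem~\ref{the:regorbitsratio}) and exhibits the family $\Psi(r)$ as evidence that $\tfrac{2}{3}$ would be best possible, but it gives no proof, so there is no argument of the authors to compare yours against. Judged on its own terms, your proposal is an honest partial result rather than a proof. The refinements you introduce are correct and genuinely sharpen the inequalities of Section~\ref{sec:bound}: $g^{m/d}$ does fix $\bigcup_{d\mid i}\Omega_i$ pointwise, so once $n>20$ and split Praeger--Xu graphs are excluded via \cite{fixicity} you get $|\Omega_2|+|\Omega_4|+|\Omega_6|\le \tfrac{n}{3}$ and $|\Omega_3|+|\Omega_6|\le \tfrac{n}{3}$, and these, combined with $N_3\le N_1$ and $N_6\le N_2$ from Lemma~\ref{lem:pairs}, do dispose of the cases $2\nmid m$ and $3\nmid m$ exactly as you describe.

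The gap is the case $6\mid m$, which you explicitly leave open, and that case is the whole content of the conjecture. Your own linear-programming computation shows that the inequalities you have assembled (the two fixity bounds together with $|\Omega_3|\le|\Omega_1|/3$ and $|\Omega_6|\le|\Omega_2|/3$) cannot yield more than $|\Omega_1|\ge\tfrac{n}{2}$, and a feasible point such as $|\Omega_1|=\tfrac{n}{2}$, $|\Omega_2|=\tfrac{n}{3}$, $|\Omega_3|=\tfrac{n}{6}$ is not excluded by anything you actually prove. Of the two escape routes you propose, the edge count around the pendant $\Omega_3$- and $\Omega_6$-orbits supplied by Lemma~\ref{lem:pairs}(1) is never carried out, and since the Pappus graph attains the conjectured bound with equality, any such count would have to be exactly tight, so it cannot be waved through; the fallback via $\Gamma/\langle g^{m/6}\rangle$ is unsound as stated, because a quotient of a cubic vertex-transitive graph by a cyclic group of automorphisms need not be cubic, simple, or vertex-transitive, so no induction hypothesis of the required form applies to it. As it stands, the conjecture remains open.
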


\section{The order of automorphisms of locally-semiprimitive graphs}
\label{sec:TW}


Let $\Gamma$ be a graph, let $G\le\Aut(\Gamma)$ and let $v\in \V(\Gamma)$. The permutation group induced by the action of the stabiliser $G_v$ on the neighbourhood
$\Gamma(v)$ of the vertex $v$ will be denoted $G_v^{\Gamma(v)}$. 
Observe that if $G$ acts transitively on $\V(\Gamma)$, then up to permutation isomorphism
the group $G_v^{\Gamma(v)}$ is independent of the choice of $v$.
In this case, if $L$ is an arbitrary permutation group permutation isomorphic to $G_v$.
we say that $\Gamma$ is of local $G$-action $L$; if $G=\Aut(\Gamma)$, then the prefix $G$ can be omitted.

A transitive permutation group $G\le \Sym(\Omega)$ is called semiregular provided that
$G_\omega = 1$ for every $\omega \in \Omega$. Furthermore,
following \cite{BerMar},
we call a transitive permutation group $G\le \Sym(\Omega)$ {\em semiprimitive} provided that
every normal subgroup $N$ of $G$ is either transitive or semiregular. Observe that every primitive, as well as every quasiprimitive
permutation group is semiprimitive. For a graph $\Gamma$ and $G\le \Aut(\Gamma)$ we
say that $\Gamma$ is {\em locally $G$-semiprimitive} whenever  the permutation group $G_v^{\Gamma(v)}$,  induced by the action of the vertex-stabiliser $G_v$ on the neighbourhood $\Gamma(v)$, is semiprimitive (or equivalently, the  local $G$-action of
$\Gamma$ is semiprimitive).

A still unresolved conjecture of Richard Weiss \cite{WeissConj} states that for every valence
$d$ there exists a constant $c_d$ such that for every connected $G$-arc-transitive locally $G$-primitive graph $\Gamma$ the order of the vertex-stabiliser is bounded by $c_d$, and in particular, $\mso(\Gamma) \le c_d$ and $\meo(\Gamma) \le c_d |\V(\Gamma)|$.
 Even though several deep partial results were proved, this conjecture is still wide open.
Weiss' conjecture was strengthened first by Cheryl Praeger
\cite{PraConj}, who relaxed the condition of local primitivity to local quasi-primitivity, and then by Spiga, Verret and the the first-named author of this paper \cite{PSV}, who relaxed the condition
to local semiprimitivity.

A starting point to most attempts to prove Weiss's conjecture is the so-called Thompson-Wielandt theorem, whose variant for locally semiprimitive graphs was proved in \cite{SpigaTW}.

\begin{theorem} \cite[Corollary 3]{SpigaTW}
\label{the:SpigaTW}
Let $\Gamma$ be a connected $G$-arc-transitive locally $G$-semiprimitive graph,
let $\{u,v\}$ be an edge of $\Gamma$ and let $G_{uv}^{[1]}$ be the point-wise stabiliser
of all the vertices at distance $1$ from $u$ or $v$. Then $G_{uv}^{[1]}$ is a $p$-group.
\end{theorem}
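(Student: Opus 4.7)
The plan is to adapt the classical Thompson--Wielandt argument, originally established for locally primitive (or locally quasi-primitive) groups, to the locally semiprimitive setting. Set $H = G_{uv}^{[1]}$; since $H$ is the point-wise stabiliser of $\Gamma(u) \cup \Gamma(v)$, it is normal in $G_{uv}$ and acts trivially both on $\Gamma(u)$ and on $\Gamma(v)$. The objective is to show that $|H|$ is a prime power.

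I would argue by contradiction: assume that two distinct primes $p$ and $q$ both divide $|H|$, and pick a Sylow $p$-subgroup $P$ and a Sylow $q$-subgroup $Q$ of $H$. The strategy is to produce a normal subgroup of $G_v$ whose image in $G_v^{\Gamma(v)}$ violates semiprimitivity. Although $P$ and $Q$ themselves lie in the kernel of the action at $v$, their normal closures $\langle P^{G_v} \rangle$ and $\langle Q^{G_v} \rangle$ inside $G_v$ project to normal subgroups of $G_v^{\Gamma(v)}$ which, by the semiprimitivity hypothesis, must each be either transitive or semiregular on $\Gamma(v)$. By arc-transitivity of $G$ on $\Gamma$, the same dichotomy applies at $u$ after swapping the roles of the endpoints.

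The core step is a coprime action argument: applying Thompson's $A\times B$ lemma (or Burnside's coprime action lemma) to $P$ and $Q$ acting on the stabilisers of vertices at distance two from $\{u,v\}$, one of the Sylow factors should be forced to act trivially on a shell deeper than $\Gamma(u)\cup\Gamma(v)$. Combined with connectivity, arc-transitivity, and the transitive/semiregular dichotomy above, this is meant to yield the required contradiction in each of the four combinations of transitive and semiregular behaviour at $u$ and at $v$, thereby ruling out the coexistence of two distinct primes in $|H|$.

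The main obstacle I anticipate is handling the semiregular branch of semiprimitivity: in the primitive case any nontrivial normal subgroup of $G_v^{\Gamma(v)}$ is automatically transitive, which gives rich local structure essentially for free, whereas a semiregular normal subgroup provides only bounded information at a single vertex. Pushing a semiregular action along the edge $\{u,v\}$ and exploiting arc-transitivity to transfer structural conclusions between $G_u$ and $G_v$, while simultaneously keeping the coprimality of $p$ and $q$ in play, is where the genuine technical weight of the argument will sit; one likely needs to iterate the construction over several steps in $\Gamma$ and use connectivity to close the contradiction.
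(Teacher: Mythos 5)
You should first note that the paper does not actually prove this statement: it is imported verbatim as \cite[Corollary 3]{SpigaTW}, so there is no in-paper proof to match your argument against. Any assessment therefore has to be of your sketch on its own terms, measured against the known Thompson--Wielandt-type arguments (Wielandt, van Bon, Spiga).

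On those terms there is a genuine gap. Your proposal correctly identifies the standard ingredients -- set $H=G_{uv}^{[1]}$, note $H\trianglelefteq G_{uv}$, take Sylow subgroups $P$ and $Q$ for two distinct primes, and play the normal closures in $G_v$ against the transitive/semiregular dichotomy furnished by semiprimitivity -- but the step that actually produces the contradiction is asserted rather than derived. The sentence claiming that a coprime-action argument ``should force'' one Sylow factor to act trivially on a deeper shell, after which connectivity finishes the job, is precisely the content of the theorem; nothing in the sketch shows why such a collapse occurs, and the invocation of Thompson's $A\times B$ lemma is not matched to any concrete coprime action on an identified $p$-group (the lemma requires a commuting pair $A\times B$ acting on a $p$-group with $A$ a $p'$-group trivial on $C_G(B)$, and no such configuration is exhibited here). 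In the actual proofs the work lies in showing, via a Frattini-type argument and careful bookkeeping of which normal subgroups of $G_{uv}$ are characteristic in $H$, that a Sylow $q$-subgroup of $H$ for the ``wrong'' prime is contained in $G_w^{[1]}$ for every neighbour $w$, whence connectivity kills it; and, as you yourself flag, the semiregular branch of the dichotomy is exactly where this transfer along the edge is delicate, since a semiregular normal local action gives no transitivity to propagate. Flagging the obstacle is not the same as overcoming it, so as written the proposal is a plan for a proof rather than a proof.
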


 Theorem~\ref{the:WieThomp} now easily follows from this result and Corollary~\ref{cor:TW} in the following way. Let connected $G$-arc-transitive locally $G$-semiprimitive graph of valence $d$ and let $\{u,v\}$ be an edge of $\Gamma$. Since $G_{uv}^{[1]}$ is the kernel of the action
 of the arc-stabiliser $G_{uv}$ on the set $X:=(\Gamma(u) \cup \Gamma(v)) \setminus \{u,v\}$ of cardinality at most $2d-2$. Hence $|G_v| = d |G_{uv}| \le d (2d-2)! |G_{uv}^{[1]}|$.
Since, by Theorem~\ref{the:SpigaTW}, $G_{uv}^{[1]}$ is a $p$-group, 
 Corollary~\ref{cor:TW} now implies that $o(g) \le d (2d-2)! \ell(g)$ for every $g\in G$.
 We have thus shown that $\meo(G) \le c_d |\V(\Gamma)|$, where
 $c_d = d (2d-2)!$. This completes the proof of  Theorem~\ref{the:WieThomp}.

\section{The order of automorphisms of quartic vertex-transitive graphs}
\label{sec:quartic}

In this section we prove Theorem~\ref{the:34} for the case of quartic graphs. That is,
we prove that $\meo(\Gamma) \le 9 |\V(\Gamma)|$ holds for every
finite connected vertex-transitive graph of valence $4$.
As we shall see, the proof quickly reduces to proving a bound on the exponent
of a Sylow $3$-subgroup of a vertex-stabiliser in a finite connected $6$-valent arc-transitive graph.  We thus begin by proving the following result, which is a generalisation of~\cite[Theorem~4.9]{GenLost}.

\begin{proposition}\label{top}
Let $L$ be a permutation group on $\Omega$, $p$ be a prime and $H$ a  $p$-subgroup of $L$. Suppose that there exist $x,y\in\Omega$  such that 
\begin{itemize}
\item $H=\langle H_x,H_y\rangle$, 
\item $x^{H}\cup y^{H}=\Omega$, and
\item $|H:H_x|=|H:H_y|=p$.
\end{itemize}
Let  $\Gamma$ be a connected $G$-vertex-transitive and $G$-edge-transitive graph with local $G$-action $L$, let $v$ be a vertex of $\Gamma$ and identify $G_v^{\Gamma(v)}$ with $L$. If $S$ is a $p$-subgroup of $G_v$ that maps to $H$ along the natural projection $G_v\to L$, then $S$ has the following properties:
\begin{enumerate}
\item $S$ has nilpotency class at most $3$;\label{bonbonbon}
\item $S$ contains an elementary abelian $p$-subgroup of order at least  $|S|^{2/3}$; \label{bonbon}
\item $|\Zent {S}|^3\geq |S|$; \label{boubou}
\item $S$ has exponent at most $p^2$.
\end{enumerate}
\end{proposition}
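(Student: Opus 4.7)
The plan is to generalise the argument of~\cite[Theorem~4.9]{GenLost}, which treats the transitive case, to the two-orbit setting considered here. The overall strategy is to pull the structure of the $p$-group $H$ back to $S$ along $\pi\colon S\to H$, exploit $G$-edge-transitivity to install a symmetric condition at the two endpoints of a well-chosen edge, and then extract the four conclusions by a Thompson--Wielandt-style commutator calculus.

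First, I would set $N=\ker\pi$ and $S_x=\pi^{-1}(H_x)$, $S_y=\pi^{-1}(H_y)$, so that $S_x,S_y$ are index-$p$ subgroups of $S$ which jointly generate $S$ and which coincide with the stabilisers in $S$ of the vertices $x$ and $y$ of $\Gamma(v)$. Because $H$ is a faithful $p$-subgroup of $\Sym(\Omega)$ with $|\Omega|\le 2p$, a short Sylow count shows that $H$ has exponent $p$ and $|H|\le p^3$; in particular $H$ is $2$-generated, and $[S,S]\cdot S^p\le N$.

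Second, by $G$-edge-transitivity there is an element $t\in G$ reversing an edge $\{v,w\}$ with $w\in\{x,y\}$. Conjugation by $t$ yields a $p$-subgroup $S^t\le G_w$ whose structural description at $w$ mirrors that of $S$ at $v$. Combining this symmetry with $S=\langle S_x,S_y\rangle$ and mimicking the commutator computation of~\cite[Theorem~4.9]{GenLost} would yield the two key inclusions $N\le \mathbf{Z}_2(S)$ and $[N,S]\le \mathbf{Z}(S)$. The first of these, together with $[S,S]\le N$, gives $\gamma_4(S)=1$ and hence~(1). Since $S^p\le N$ and $[N,S]\le\mathbf{Z}(S)$, a short commutator identity yields $s^{p^2}=1$ for every $s\in S$, giving~(4). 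For~(3), the inclusion $N\le\mathbf{Z}_2(S)$ together with $|S/N|\le p^3$ and the standard embedding $\mathbf{Z}_2(S)/\mathbf{Z}(S)\hookrightarrow \mathrm{Hom}(S/\mathbf{Z}(S),\mathbf{Z}(S))$, applied to the $2$-generated quotient $S/\mathbf{Z}(S)$, produces $|S|\le|\mathbf{Z}(S)|^3$. Finally, (2) follows from~(3) and~(4) by adjoining to the subgroup of $\mathbf{Z}(S)$ consisting of elements of order at most $p$ a commuting pair of order-$p$ lifts of generators of $H/\Phi(H)$, yielding an elementary abelian subgroup of index at most $p^2$ in $S$, hence of order at least $|S|^{2/3}$.

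The main obstacle is the commutator step establishing $N\le \mathbf{Z}_2(S)$ and $[N,S]\le \mathbf{Z}(S)$, where the arc-reversing element $t$ is used most delicately. Unlike in the transitive case of~\cite[Theorem~4.9]{GenLost}, the two-orbit hypothesis on $H$ may allow $t$ to reverse arcs in only one of the two $G$-orbits on the arc set of $\Gamma$. Transferring the symmetry across to the other orbit forces a short additional case distinction, and the nested commutator identities must be bookkept carefully to preserve them.
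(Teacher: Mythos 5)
Your overall architecture is not the paper's, and the step you yourself flag as ``the main obstacle'' is precisely where the proof is missing. You assert that ``mimicking the commutator computation of~\cite[Theorem~4.9]{GenLost}'' yields $N\le \mathbf{Z}_2(S)$ and $[N,S]\le \Zent{S}$ for $N=\ker(S\to H)$, but you give no such computation, and this pair of inclusions is essentially the entire content of the proposition: every one of the four conclusions is derived from it in your sketch. The paper does not argue this way at all. It constructs a \emph{shunt} $\phi\in G$ with $(u,v)^\phi=(v,w)$ (after first showing the arcs $(u,v)$ and $(v,w)$ lie in the same $G$-orbit), proves that $\langle S,\phi\rangle$ is transitive on $\V(\Gamma)$, deduces that the subgroup of $S_u$ generated by all $\phi$-normalised subgroups is trivial, and thereby places the triple $(S,S_u,S_w)$ with the conjugating isomorphism $\phi$ squarely into the Glauberman--Currano theory of finite $p$-groups with isomorphic subgroups of index $p$; conclusions (1)--(3) are then read off from Currano's structure theorem and (4) from~\cite[Lemma~3.6]{GenLost}. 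That body of $p$-group theory is a substantial piece of work, not a short commutator identity, so leaving it as ``mimic the computation'' is a genuine gap rather than a compressible routine step.

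Two further points would need repair even if the central step were supplied. First, $G$-edge-transitivity does \emph{not} give an element $t$ reversing an edge $\{v,w\}$: half-arc-transitive graphs are vertex- and edge-transitive with no arc-reversing automorphism, and the proposition is stated exactly in that generality (this is why the paper works with a shunt $(u,v)\mapsto(v,w)$ rather than a reversal, after the short orbit argument at the start of its proof). Second, your derivation of (2) concludes from ``an elementary abelian subgroup of index at most $p^2$'' that its order is at least $|S|^{2/3}$; that implication requires $|S|\ge p^6$, which you have not established, and the existence of commuting order-$p$ lifts of generators of $H/\Phi(H)$ that also centralise your chosen subgroup of $\Zent{S}$ is asserted without justification. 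In the paper, the elementary abelian subgroup of order at least $|S|^{2/3}$ comes directly from Currano's explicit generators $x_1,\ldots,x_u$ with $u\ge \tfrac{2}{3}t$, not from such a lifting argument.
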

\begin{proof}
Let $u$ and $w$ be the neighbours of $v$ corresponding to $x$ and $y$ under the identification of $\Gamma(v)$ with $\Omega$.

We show that the arcs $(u,v)$ and $(v,w)$ are in the same $G$-orbit. We argue by contradiction and we suppose that this is not the case. Since $\Gamma$ is $G$-edge-transitive, it follows that $(u,v)$ is in the same $G$-orbit as $(w,v)$. This implies that $u$ and $w$ are in the same $G_v$-orbit and hence $x$ and $y$ are in the same $L$-orbit. This implies that $L$ is transitive, so  $\Gamma$ is $G$-arc-transitive and hence $(u,v)$ and $(v,w)$ are in the same $G$-orbit, which is a contradiction. 

Let $\phi\in G$ such that $(u,v)^\phi=(v,w)$. We show that $\langle S,\phi \rangle$ is transitive on $\V(\Gamma)$. For $i\in\ZZ$, let $v_i=v^{\phi^i}$ and let $S_i=S^{\phi^i}$. Note that $(v_{-1},v_0,v_1)=(u,v,w)$ and hence $\Gamma(v_0)=(v_{-1})^{S_0}\cup (v_{1})^{S_0}$. Conjugating by $\phi^i$, we obtain that $\Gamma(v_i)=(v_{i-1})^{S_i}\cup (v_{i+1})^{S_i}$ for every $i\in\ZZ$. Let $G^*=\langle S_i\mid i\in\ZZ\rangle$ and let $X=v^{\langle \phi\rangle}=\{v_i\mid i\in\ZZ\}$. Note that $G^*\leq \langle S,\phi \rangle$, and  hence it suffices to show that $X^{G^*}=\V(\Gamma)$. By contradiction, suppose that there exists a vertex not in $X^{G^*}$ and choose one with minimum distance to $X$. Call this vertex $\alpha$ and let $(p_0,\ldots,p_{n-1},p_n)$ be a shortest path from $\alpha$ to a vertex of $X$. In particular, $p_0=\alpha$ and $p_n=v_i$ for some $i\in\ZZ$.  Since $\Gamma(v_i)=(v_{i-1})^{S_i}\cup (v_{i+1})^{S_i}$, there exists $\sigma\in S_i\leq G^*$ such that $(p_{n-1})^\sigma\in\{v_{i-1},v_{i+1}\}\subseteq X$. Since $\alpha$ is not in $S^{G^*}$, neither is $\alpha^\sigma$, but $\alpha^\sigma$ is closer to $X$ than $\alpha$ is, which is a contradiction.

From now on, we follow the notation of~\cite{Currano} and~\cite{Glaub} as closely as possible. Let $P=S$, let $R=S_u$ and let $Q=S_w$. Note that $R^\phi=Q$, and $R$ and $Q$ both have index $p$ in $P$. 

Let $N$ be the subgroup of $P$ generated by all the subgroups of $R$ that are normalised by $\phi$. By~\cite[Proposition~2.1]{Glaub}, $N$ is normal in $P$. By definition, $N$ is normalised by $\phi$ and hence $N$ is normalised by $\langle P,\phi \rangle$. On the other hand, we have shown that $\langle P,\phi \rangle$ is transitive on $\V(\Gamma)$.  Since $N\leq P\leq G_v$, it follows that $N=1$. This shows that condition (1.1) of~\cite{Currano} is satisfied.

 Let $|S|=p^t$ and let $u$, $v$ and $x_1,\ldots,x_t$ be as in~\cite[Theorem~1]{Currano} and let $E=\langle x_1,\ldots,x_u\rangle$. By~\cite[Lemma~2.2~(d,f,g)]{Currano}, $P$ has nilpotency class at most $3$, $\Zent{P}=\langle x_{v+1},\ldots,x_u\rangle$ and $[P,P]$ is elementary abelian. It follows from~\cite[Lemma~2.1]{Currano} that $P$ is generated by elements of order $p$ and that $|P|=p^t$, $|E|=p^u$ and $|\Zent P|=p^{u-v}$. It also follows from~\cite[Theorem~1~(1.2-1.5)]{Currano} that $v=t-u$, $u\geq \frac{2}{3}t$, $E\leq P$ and $E$ is elementary abelian which concludes the proof of (\ref{bonbon}). Since $v=t-u$, $|\Zent P|=p^{u-v}=p^{2u-t}\geq p^{t/3}=|P|^{1/3}$ and (\ref{boubou}) follows. Finally, $P$ has exponent at most $p^2$ by~\cite[Lemma~3.6]{GenLost}.
\end{proof}

\begin{lemma}\label{TwoOrbitsP}
Let $p$ be an odd prime and let $L$ be a permutation group of degree $2p$, either transitive or having two orbits of size $p$, and let $H$ be a Sylow $p$-subgroup of $L$. One of the following holds:
\begin{enumerate}
\item $H$ is semiregular of order $p$, or 
\item $H$ satisfies the first part of the hypothesis of Proposition~\ref{top}.
\end{enumerate}
\end{lemma}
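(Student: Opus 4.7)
The first observation is that since $p$ is odd we have $2p < p^2$, so $(2p)!$ has $p$-part exactly $p^2$; combined with the fact that $L$ has at least one orbit of length $p$ (hence $p \mid |L|$), we get $|H| \in \{p, p^2\}$. The plan is then a short case analysis on $|H|$.

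If $|H| = p$, I would show $H$ is semiregular. When $L$ is transitive on $\Omega$, $|L_\omega|$ is coprime to $p$, so the $p$-group $H$ intersects every point stabiliser trivially. When $L$ has two orbits $\Omega_1, \Omega_2$ of length $p$, each projection of $H$ into $L|_{\Omega_i}$ is a Sylow $p$-subgroup of a transitive subgroup of $\Sym(p)$, hence non-trivial; both projections being injective then forces $H$ to act regularly on each orbit. Either way, case (1) holds.

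If $|H| = p^2$, the first step is to argue $H$ has two orbits of length $p$. Since $H$-orbit sizes divide $p^2$ but are bounded by $2p < p^2$, they lie in $\{1, p\}$; the number of fixed points is $\equiv 2p \equiv 0 \pmod{p}$, so the only possibilities are $0$ or $p$ fixed points. If there were $p$ fixed points, the remaining $p$ points would form a single $H$-orbit, and the kernel of the action on this orbit would be a non-trivial subgroup of $H$ fixing all of $\Omega$, contradicting faithfulness. So $H$ has two orbits $\Omega_1, \Omega_2$ of length $p$.

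For each $i$, the image of $H$ in $\Sym(\Omega_i)$ has order $p$, and its kernel $K_i$ has order $p$; the intersection $K_1 \cap K_2$ fixes $\Omega$ pointwise, so it is trivial, giving $H = K_1 \times K_2$. Choosing $x \in \Omega_1$ and $y \in \Omega_2$, a direct check shows $H_x = K_1$ and $H_y = K_2$, so $|H:H_x| = |H:H_y| = p$, $\langle H_x, H_y\rangle = H$, and $x^H \cup y^H = \Omega_1 \cup \Omega_2 = \Omega$; this is exactly case (2). The only mild delicacy throughout is the fixed-point counting that rules out the $p$-fixed-points configuration when $|H| = p^2$.
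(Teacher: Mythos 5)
Your proof is correct, and in the key case $|H|=p$ it takes a genuinely different and more elementary route than the paper. The paper disposes of the transitive case by invoking the classification of primitive permutation groups of degree $2p$ (a consequence of the classification of finite simple groups) together with a discussion of the possible block systems, and then handles the intransitive case by contradiction: if $H$ fixed a point it would fix one $L$-orbit pointwise, a conjugate Sylow subgroup $H'$ would fix the other orbit pointwise, and $\langle H,H'\rangle$ would be a $p$-subgroup of order $p^2$, contradicting $|H|=p$. You avoid both devices. In the transitive case you note that the $p$-part of $|L|=2p\,|L_\omega|$ equals the $p$-part of $2p$, so $p\nmid |L_\omega|$ and the $p$-group $H$ meets every point stabiliser trivially; in the intransitive case you observe that the image of $H$ in each $L^{\Omega_i}$ is a Sylow $p$-subgroup of a transitive group of degree $p$, hence non-trivial, hence (as $|H|=p$) a faithful regular image, so all point stabilisers in $H$ are trivial. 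This is shorter, self-contained and free of the classification, which is a genuine gain. The case $|H|=p^2$ is the same in both proofs in spirit: the paper dismisses it with ``it is easy to check,'' and your fixed-point count followed by the decomposition $H=K_1\times K_2$ with $K_1=H_x$ and $K_2=H_y$ is precisely the check that is meant.
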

\begin{proof}
Since $p$ is odd, $|H|$ divides $p^2$. If $|H|=p^2$, then $H$ must be isomorphic to $\C_p\times \C_p$ acting naturally with two orbits of size $p$ and it is easy to check that the first part of the hypothesis of Proposition~\ref{top} is satisfied. Since $p$ divides the size of an orbit of $L$, $H\neq 1$ hence $|H|=p$. 

If $L$ is primitive then, a well known consequence of the classification of finite simple groups states that
 either $\Alt(2p)\leq L$ or $p=5$ and $\Alt(5)\leq L\leq\Sym(5)$. In the former case, $|H|=p^2$ while in the later case, $p$ does not divides $|L_x|$ hence $H$ is semiregular. If $L$ admits a system of $p$ blocks of size $2$, then again $p$ does not divides $|L_x|$. If $L$ admits a system of $2$ blocks of size $p$, then $H$ is contained in the setwise stabiliser of the blocks, which has two orbits of size $p$. So we may assume that $L$ has two orbits of size $p$.

We may assume that $H$ is not semiregular, and thus $|H_x|=p$ for some $x\in\Omega$. Since $|H|=p$, $H$ fixes $x$, so $H$ fixes $p$ points and has one orbit of size $p$, with representative $y$, say. Now, $x$ and $y$ must be representatives of the two orbits of $L$ and there must be another Sylow subgroup $H'$ that is transitive on $x^L$ and, since it is conjugate to $H$, it must fix $y^L$ pointwise. Now, $|\langle H,H'\rangle|=p^2$, which is a contradiction.
\end{proof}

Note that the hypothesis that $p$ is odd is necessary as if $p=2$, then $L=H=\D_4$ in its natural action is a counterexample. (It is neither semiregular nor generated by point-stabilisers.)

\begin{corollary}\label{cor:Gabriel}
Let $p$ be an odd prime and let $\Gamma$ be a connected $G$-vertex-transitive  and $G$-edge-transitive graph of valency $2p$, and let $S$ be a Sylow $p$-subgroup of a vertex-stabiliser $G_v$. Then $S$ has the properties at the end of Proposition~\ref{top}.
\end{corollary}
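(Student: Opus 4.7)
The plan is to identify the local $G$-action $L := G_v^{\Gamma(v)}$ as a permutation group of degree $2p$ and reduce the claim to Proposition \ref{top} via Lemma \ref{TwoOrbitsP}. Because $\Gamma$ is $G$-vertex-transitive and $G$-edge-transitive, $L$ is either transitive (when $\Gamma$ is $G$-arc-transitive) or has exactly two orbits on $\Gamma(v)$, both of size $p$. The equal-size claim in the non-arc-transitive case is a standard double-count: $G$ has two orbits on arcs, each edge contributes precisely one arc to each, so both arc-orbits have cardinality $|\E(\Gamma)|$, forcing the two $G_v$-orbits on $\Gamma(v)$ to have the same size. Thus $L$ meets the hypothesis of Lemma \ref{TwoOrbitsP}.

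Let $\phi : G_v \to L$ denote the natural surjection and put $H := \phi(S)$. Since surjective homomorphisms carry Sylow $p$-subgroups to Sylow $p$-subgroups, $H$ is a Sylow $p$-subgroup of $L$. By Lemma \ref{TwoOrbitsP}, either $H$ satisfies the first part of the hypothesis of Proposition \ref{top}, or $H$ is semiregular of order $p$. In the first case, Proposition \ref{top} applies verbatim with the data $(\Gamma, G, v, L, H, S)$ and yields all four desired conclusions for $S$.

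The remaining case, where $H$ is semiregular of order $p$, is the main obstacle since the hypothesis of Proposition \ref{top} fails. I would resolve it by showing $S = H$, after which the four properties hold trivially. Set $S^{[1]} := S \cap \ker \phi$. For each $w \in \Gamma(v)$ one has $S^{[1]} \le G_w$, and the image of $S^{[1]}$ in $G_w^{\Gamma(w)}$ is a $p$-subgroup fixing $v \in \Gamma(w)$. By vertex-transitivity, $G_w^{\Gamma(w)}$ is permutation isomorphic to $L$, and in $L$, since $H$ is semiregular of order $p$, every $H$-orbit has size $p$ and hence every $L$-orbit has size divisible by $p$. Therefore $|L_x|$ is coprime to $p$ for every point $x$, so every $p$-subgroup of $L$ fixing a point is trivial. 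It follows that the image of $S^{[1]}$ in $G_w^{\Gamma(w)}$ is trivial, i.e., $S^{[1]}$ fixes $\Gamma(w)$ pointwise.

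A standard connectivity propagation finishes the proof. Let $U$ be the set of vertices $u$ such that $S^{[1]}$ fixes $\{u\} \cup \Gamma(u)$ pointwise. The preceding paragraph gives $v \in U$, and the same argument applied at $u$ in place of $v$ shows $\Gamma(u) \subseteq U$ whenever $u \in U$. Connectivity of $\Gamma$ forces $U = \V(\Gamma)$, so $S^{[1]}$ fixes $\Gamma$ pointwise and therefore $S^{[1]} = 1$. Hence $|S| = |H| = p$ and the conclusions of Proposition \ref{top} hold trivially for $S$.
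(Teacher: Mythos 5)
Your proposal is correct and follows the same route as the paper: reduce to Lemma~\ref{TwoOrbitsP}, apply Proposition~\ref{top} in the non-semiregular case, and show $|S|=p$ in the semiregular case. The only difference is that you spell out the details (the equal-orbit double count and the connectivity argument forcing $S\cap\ker\phi=1$) that the paper compresses into ``$S$ is arc-semiregular and $|S|=p$''; these details are accurate.
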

\begin{proof}
Let $L$ be the local action at $v$. This is a permutation group of degree $2p$. It is either transitive, or has two orbits of size $p$. Let $H$ be the projection of $S$ onto $L$. Note that $H$ is a Sylow $p$-subgroup of $L$. We apply Lemma~\ref{TwoOrbitsP} to conclude that $H$ is semiregular or $H$ satisfies the first part of Proposition~\ref{top}. If $H$ is semiregular, then $S$ is arc-semiregular and $|S|=p$ and clearly it satisfies all the properties. Otherwise, we apply Proposition~\ref{top}.
\end{proof}

Equipped with Corollary~\ref{cor:Gabriel}, we can now finish the proof of 
Theorem~\ref{the:34} for the case of quartic graphs. 
For the rest of the section, 
let $\Gamma$ be a finite connected vertex-transitive graph of valence $4$, let $G=\Aut(\Gamma)$ and let $v\in \V(\Gamma)$.
Then $G_v^{\Gamma(v)}$ is permutation isomorphic to one of the groups:
\begin{itemize}
\item[{\rm (1)}]
the doubly transitive permutation groups $\Sym(4)$, $\Alt(4)$ of degree $4$;
\item[{\rm (2)}]
the transitive groups $\D_4$, $\C_4$, $\C_2\times \C_2$ of degree $4$;
\item[{\rm (3)}]
$\{\id\}$,  $\C_2\times \C_2$ or $\C_3$ in their unique intransitive
faithful actions on $4$ points, or $\C_2$ either its action with two orbits of length $2$ or
in its action with three orbits, one of length $2$ two of length $1$;
\item[{\rm (4)}]
$\Sym(3)$ in its unique faithful action on $4$ points.
\end{itemize}

If (1) occurs, then $G$ acts transitively on the $2$-arcs of $\Gamma$ (where
a $2$-arc is a triple $(w,u,v)$ of distinct vertices such that $u$ is adjacent to both $w$ and $v$). Based on the results in \cite{Gard,WeissSmallVal} it was proved in \cite{Pot4} that
$G_v$ is then isomorphic to one of $7$ finite groups. In particular, it follows from
\cite[Theorem 4]{Pot4} that $\exp(G_v)$ divides  $2^3\cdot 3^2$.
It then follows from Corollary~\ref{cor:TW} that $o(g) \le 8 \ell(g)$ for every $g\in G$.

If either (2) or (3) occurs, then the fact that $G_v^{\Gamma(v)}$ is a $p$-group for $p=2$ or $p=3$, the connectivity of $\Gamma$ implies that $G_v$ is a $p$-group, and
Corollary~\ref{cor:TW} then yields that $o(g) = \ell(g)$.

For the rest of the section we will assume that (4) occurs. Then every vertex $v\in \V(\Gamma)$ has a unique neighbour $v'$ which is fixed by every automorphism in $G_v$.
Observe that $v'' = v$ for every $v\in \V(\Gamma)$ and that the set
$M=\{ \{v,v'\} : v\in \V(\Gamma)\}$ forms a complete matching of $\Gamma$
invariant under the action of $G$. 

Think of the edges in $M$ red and the edges outside $M$ blue. Similarly, call
an arc blue or red if its underlying edge is blue or red.
Observe that $G$ has two orbits on the arc-set of $\Gamma$, one consisting
of all blue arcs and and of all red arcs.

Let $\Lambda$ be the graph with vertex-set $M$ and with two
red edges $vv'$, $uu' \in M$ adjacent in $\Lambda$ whenever
one of $v,v'$ is adjacent to one of $u,u'$. Clearly, every element of $G$ (in its action on $M$) induces an automorphism of $\Lambda$. Let $K$ denote the kernel of the
action of $G$ on $M$. Then $G/K$ is a vertex-transitive group of automorphisms
of $\Lambda$. 

Let $e:=vv'$ be a red edge of $\Gamma$, let $a,b,c$ be the three neighbours of $v$ distinct from $v'$ and let $x,y,z$ be the three neighbours of $v'$ distinct from $v$.
Then the neighbourhood of $e$ in $\Lambda$ is $\{aa', bb',cc',dd',xx',yy',zz'\}$.
Observe also that the stabiliser $G_e$ acts transitively on the set $\{a,b,c,x,y,z\}$,
and thus also on the neighbourhood of $e$ in $\Lambda$. In particular, $G/K$
is not only vertex-transitive group of automorphism of $\Lambda$ but in fact arc-transitive.

Since $a,b,c$ are pairwise distinct, so are $aa'$, $bb'$ and $cc'$.
The valence of $\Lambda$ is thus at most $6$ and at least $3$.
Observe also that since $G_v^{\Gamma(v)} \cong \Sym(3)$ and since $G_v$,
 the vertex-stabiliser $G_v$ contains an element $g$ acting on $\{a,b,c\}$ as the permutation $(a\, b\, c)$ and on $\{x,y,z\}$ as $(x\, y\, z)$. 
 
 Suppose first that the valence of $\Lambda$ is less than $6$. Then the red edge $aa'$ equals one of the edges $xx'$, $yy'$ or $zz'$, say $xx'$. By applying the automorphism $g$ twice,  then see that $yy'=bb'$ and $zz'=cc'$, implying that the valence of $\Lambda$ is $3$ in this case.  In particular, $\Lambda$ is a cubic $G/K$-arc-transitive graph.
  By \cite{tutte}, the order of the vertex-stabiliser $(G/K)_e$ is then
 of the form $3\cdot 2^{s-1}$ for some $s\le 5$. 
 Let us now consider the kernel $K$.
  Since $G$ acts transitively on the arcs of $\Lambda$, the subgraph $B$ of $\Gamma$ induced by the blue edges between two adjacent distinct red edges $uu'$ and $ww'$ is independent of the choice of $uu'$ and $ww'$ and admits an automorphism swapping the pair $\{u,u'\}$ with the pair $\{w,w'\}$ . Since there are precisely
 six blue edges adjacent to any given red edge 
 it follows that $B$ consists of two edges and must thus be isomorphic to $2K_2$.
By the connectivity of $\Gamma$, this implies that every element of $K$ either
fixes each pair $\{w,w'\}$, $w\in \V(\Gamma)$ point-wise or swaps the two vertices
in each such pair. In particular, the order of $K$ is at most.
This implies that the order of $G_e$ is at most twice the order of 
$(G/K)_e$ and thus equal to $3\cdot 2^{s}$ for some $s\le 5$.
Since $G_v$ is of index $2$ in $G_e$, Corollary~\ref{cor:TW} implies that
$o(g) \le 3 \ell(g)$ for every $g\in G$.

We are thus left with the case where the valence of $\Lambda$ is $6$.
 Observe that then there is at most one blue edge
between every two red edges, implying that  $K=1$ and $G \le \Aut(\Lambda)$.
Moreover, the group $G_e^\Lambda(e)$ is permutation isomorphic to the
group induced by the action of $G_e$ on the vertices $\{a,b,c,x,y,z\}$.
Observe that the latter group imprimitive with $\{a,b,c\}$ being a block 
of imprimitivity, implying that its order is a a divisor of $2|\Sym(3)| = 12$.
The connectivity of $\Lambda$ then implies that $G_e$ is a  $\{2,3\}$-group.
On the other hand, by Corollary~\ref{cor:Gabriel},
the Sylow $3$-subgroup has exponent at most $9$. In particular,
$\exp(G_e)$ divides $9 \cdot 2^\alpha$ with $\alpha$ a positive integer.
Since $G_v$ has index $2$ in $G_e$, Corollary~\ref{cor:TW} then implies
that $o(g) \le 9 \ell(g)$ holds for every $g\in G$. This completes the proof of
Theorem~\ref{the:34}.

\end{document}